\newcommand{\dd}{\mathrm{d}}
\newcommand{\ep}{\varepsilon}
\newcommand{\din}{\dot{\,\in\,}}
\newcommand{\ddin}{\ddot{\,\in\,}}
\def\N{\mathbb{N}}
\def\R{\mathbb{R}}
\def\L{\textnormal{L}}
\def\Cc{\textnormal{C}}
\def\U{\textnormal{U}}
\def\W{\textnormal{W}}
\def\S{\textnormal{S}}
\def\E{\textnormal{E}}
\def\D{\textnormal{D}}
\def\H{\textnormal{H}}
\def\pa{\partial}
\def\ov{\overline}
\def\Id{\textnormal{Id}}
\newtheorem{Theo}{Theorem}[section]
\newtheorem{cor}[Theo]{Corollary}
\newtheorem{lem}[Theo]{Lemma}
\newtheorem{Propo}[Theo]{Proposition}
\newtheorem{definition}[Theo]{Definition}
\newtheorem{remark}[Theo]{Remark}
\begin{document}

\begin{center}
\Large{On the entropic structure of reaction-cross diffusion systems}
 \end{center}
\bigskip

\centerline{\scshape L. Desvillettes}
\medskip
{\footnotesize
  \centerline{CMLA, ENS Cachan, CNRS}
  \centerline{61 Av. du Pdt. Wilson, F-94230 Cachan Cedex, France}
\centerline{E-mail: desville@cmla.ens-cachan.fr}}
\bigskip

\centerline{\scshape Th. Lepoutre}
\medskip
{\footnotesize
\centerline{INRIA
}
\centerline{Universit\'e de Lyon}
\centerline{CNRS UMR 5208}
\centerline{Universit\'e Lyon 1}
\centerline{Institut Camille Jordan}
\centerline{43 blvd. du 11 novembre 1918}
\centerline{F-69622 Villeurbanne cedex France}
\centerline{E-mail: thomas.lepoutre@inria.fr}}
\bigskip

\centerline{\scshape A. Moussa}
\medskip
{\footnotesize
\centerline{UPMC Universit\'e Paris 06 \& CNRS}
\centerline{UMR 7598, LJLL, F-75005, Paris, France}
\centerline{E-mail : moussa@ann.jussieu.fr}}
\bigskip

\centerline{\scshape A. Trescases}
\medskip
{\footnotesize
  \centerline{CMLA, ENS Cachan, CNRS}
  \centerline{61 Av. du Pdt. Wilson, F-94230 Cachan Cedex, France}
\centerline{E-mail: trescase@cmla.ens-cachan.fr}}
\bigskip

\begin{abstract}
This paper is devoted to the study of systems of reaction-cross diffusion equations arising
in population dynamics. New results of existence of weak solutions are presented, allowing
to treat systems of two equations in which one of the cross diffusions is convex, while the other one is concave. The treatment of such cases involves a general study of the structure
of Lyapunov functionals for cross diffusion systems, and the introduction of a new scheme of approximation, which provides simplified proofs of existence.
\end{abstract}

\section{Introduction}


\subsection{The system}\label{subsec:syst}
All the systems that we are going to tackle in this study take the following form 
\begin{align}\label{eq:syst_multi}
\partial_t u_i -\Delta[a_i(u_1,\dots,u_I)\,u_i] &= r_i(u_1,\cdots,u_I)\,u_i,\text{ on }\R_+\times\Omega, \text{ for }i=1,\ldots,I,
\end{align}
where $a_i, r_i : \R^I \rightarrow \R$, the unknown being here the family of $I$ nonnegative functions $u_1,\dots,u_I$. Here and hereafter, $\Omega$ is a bounded open set of $\R^d$, whose unit normal outward vector at point $x\in\pa\Omega$ is denoted by $n=n(x)$. The system is completed with initial conditions given by a family of functions $u_1^{in},\dots,u_I^{in}$, and homogeneous Neumann boundary conditions, that is $\pa_n u_i:=\nabla u_i \cdot n =0$ on $\R_+\times\partial\Omega$ for $i=1,\dots ,I$. 
\par
Introducing the vectorial notations $U:=(u_1,\dots,u_I)$ and $U^{in}:=(u_1^{in},\dots,u_I^{in})$, and denoting by $A$ and $R$ the maps $A: U\mapsto (a_i(U)\,u_i)_i$ and $R: U\mapsto (r_i(U)\,u_i)_i$, the previous system \eqref{eq:syst_multi} can be summarized in the vectorial equations
\begin{align}
\label{eq:syst_multi_vec}\partial_t U -\Delta [A(U)] &= R(U),\text{ on }\R_+\times\Omega, \\
\label{eq:syst_multi_vec_neum}\pa_n U &= 0,\text{ on }\R_+\times\pa \Omega, \\
\label{eq:syst_multi_vec_init} U(0) &=U^{in}, \text{ on }\Omega.
\end{align}
\medskip

Such systems have received a lot of attention lately (cf. \cite{Chen2006}, \cite{deslepmou}, for example). Their origin is to be found in the seminal paper \cite{Shigesada1979}, where one typical example 
(now known in the literature as the SKT system) is introduced: namely, $I=2$, and $A,R$ are given by affine functions:
$$ a_1(u_1,u_2) = d_1 + d_{11}\, u_1 + d_{12}\, u_2, 
\qquad a_2(u_1,u_2) = d_2 + d_{21}\, u_1 + d_{22}\, u_2, $$
$$ r_1(u_1,u_2) = R_1 - R_{11}\, u_1 - R_{12}\, u_2, 
\qquad r_2(u_1,u_2) = R_2 - R_{21}\, u_1 - R_{22}\, u_2 . $$
The corresponding equations model the evolution of individuals belonging to two
species in competition, which increase their diffusion rate in order to
avoid the individuals of the other (or the same) species.  This evolution can lead 
 to the formation of patterns when $t\to \infty$ (cf. \cite{Shigesada1979}).
There is an important literature on the question of existence of global classical solutions to the SKT systems (but only for particular cases). To summarize the approach, one can prove local existence of classical solutions 
 using Amman's theorem \cite{amann90b} and the difficulty relies then in the proof of bounds on the solutions in suitable Sobolev spaces to prevent blow up. The works in this direction always have restrictions on the coefficients (typically no cross diffusion is introduced for one of the species, cf. for example \cite{DesTres}) and/or on dimension. In particular, it is worth noticing that existence of global classical solutions to the full SKT system (that is, when all coefficients are positive) remains a challenging open problem except in dimension 1. 
\medskip

Our work however deals with the existence of global weak solutions for which an important step forward was made by Chen and J\"ungel in
\cite{Chen2006}. They indeed showed that a hidden Lyapunov-style 
functional  (that is, a Lyapunov functional if the terms $r_1,r_2$
are neglected) exists for this system 
without restriction on coefficients such as strong self diffusion for instance.
In \cite{deslepmou}, this structure was shown to be robust enough to treat
functions $a_1$, $a_2$ such as
\begin{equation}\label{sta}
 a_1(u_1,u_2) = d_1 + d_{11}\, u_1^{\delta_{11}} + d_{12}\, u_2^{\delta_{12}}, 
\qquad a_2(u_1,u_2) = d_2 + d_{21}\, u_1^{\delta_{21}} + d_{22}\, u_2^{\delta_{22}}, 
\end{equation}
when $\delta_{12} \in ]0,1[$ and $\delta_{21} \in ]0,1[$.
A tool coming out of the reaction-diffusion theory (namely, duality lemmas, cf. \cite{PiSc}) 
was also introduced in the context of cross diffusion type systems in \cite{PierreD} and \cite{Lepoutre_JMPA}. It was then used associated with the entropy structure in \cite{deslepmou}
in order to extend the range of entropies that could be treated.
\medskip

In this paper, we investigate more deeply the structure of systems like 
(\ref{eq:syst_multi}), in order to exhibit as much as possible 
the conditions which enable the existence of Lyapunov-style 
functionals. Subsection \ref{subsec:genent} is directly devoted to this study.
As an application, we show that cross-diffusion terms like (\ref{sta})
can be treated as soon as the product $\delta_{12} \,\delta_{21}$ belongs to $]0,1[$, thus significantly enlarging the conditions described in \cite{deslepmou}.
\medskip

Another difficulty appearing in many works on equations involving 
cross diffusion is the difficulty, once {\it{a priori}} estimates
have been established, to write down an approximation scheme which
implies existence. 
An important issue comes from the fact that the entropy structure and the duality estimates 
are of very different nature. Therefore it is difficult to build an approximation that preserves both properties. In \cite{Chen2006} and related works, the entropy induces a better integrability, or even boundedness in \cite{jungbound,Zamponi}, making the use of duality estimates unnecessary. In \cite{deslepmou}, the approximation procedure is inspired from \cite{Chen2006} and duality is proved to be satisfied  \emph{a posteriori} (in fact at some intermediate step).
This is related to a lack of robustness of the Lyapunov-style 
functionals (it is indeed difficult to extend them to the approximate system). We therefore present in this paper a new approximation scheme
which is definitely easier to grasp than those presented in
\cite{Chen2006} or \cite{deslepmou}. Indeed, we use a time-discretized version of the equation for which existence can be obtained thanks to a standard (Schauder-type) fixed-point theorem, and which conserves the structure of the time-continuous equation from the viewpoint of {\it{a priori}} estimates. As a consequence, the passage to the limit when the discretization step goes to $0$ is not much more difficult than the passage to the limit in a sequence of solutions to the equation (that is, the weak stability of the equation). Note that a related (yet different) time-discrete approximation was recently used (together with a regularization) in the context of cross-diffusion systems in \cite{jungbound}.

\subsection{Assumptions}

Throughout this study, we will state and prove results necessitating one or several of the following assumptions on the parameters of (\ref{eq:syst_multi}) :
\begin{itemize}
\item[\textbf{H1}] The functions $a_i$ and $r_i$ are continuous from $\R_+^I$ to $\R$.
\item[\textbf{H2}] For all $i$, $a_i$ is lower bounded by some positive constant $\alpha>0$, and $r_i$ is upper bounded by a positive constant $\rho>0$. That is $a_i(U) \ge \alpha > 0$, 
$r_i(U)\le \rho$ for all $U\in \R_+^I$.
\item[\textbf{H3}] $A$ is a homeomorphism from $\R_+^I$ to itself. 
\end{itemize}
The set of all these assumptions will be invoked at the beginning of each statement (if needed) by writing $(\textbf{H})$. Assumption \textbf{H3} will be of utmost importance during the establishment of our approximation scheme. Assumptions \textbf{H1} and \textbf{H2} are usually
 easily checked and appear quite natural. It is however less clear to understand what type of systems satisfies \textbf{H3}. We therefore describe in Section \ref{subsec:satisfH3} some examples of systems satisfying \textbf{H3}.

\subsection{Notations} 
Since we will always work on $Q_T:=[0,T]\times\Omega$, we will simply denote by $\L^p_t(\L^q_{x})$ the corresponding evolution spaces and we will use the same convention for Sobolev spaces $\H^\ell$ and $\W^{\ell,p}$ (for $\ell\in\N, p\in[1,\infty]$). Cones of nonnegative functions will be noted specified with a $+$ subscript, for instance $\L^\infty(\Omega)_+$ is the cone of all essentially bounded nonnegative functions.   For $p\in[1,\infty[$,
 we will sometimes use the notation $\L^{p^+}$ to speak of the set of all $\L^q$ functions with $q>p$. We define also the space $\H^1_m(\Omega)$ of $\H^1(\Omega)$ functions having zero mean. Its dual is denoted by $\H^{-1}_m(\Omega)$. For any space of functions defined on $\Omega$ whose gradient has a well-defined trace on $\partial\Omega$ (such as $\H^2(\Omega)$ or $\mathscr{C}^\infty(\overline{\Omega})$ for instance), we add the subscript $\nu$ (the former spaces becoming then $\H^2_\nu(\Omega)$ and $\mathscr{C}^\infty_\nu(\overline{\Omega})$) when we wish to consider the subspaces of functions satisfying the homogeneous Neumann boundary condition.
 
\vspace{2mm}

 Given a normed space $X$, we will always denote by $\|\cdot\|_X$ its norm, except for $\L^p$ spaces for which we often write $\|\cdot\|_p$. If $(x_n)_n$ is a sequence of $X$, $(x_n)_n\din X$, and $(x_n)_n\ddin X$ respectively mean that $(x_n)_n$ is bounded in $X$, and $(x_n)_n$ is relatively compact in $X$.


\vspace{2mm}

The symbol $|\cdot|$ will always represent the Euclidian norm (but possibly in different dimensions depending of the context), whereas $|\cdot|_1$ will be the Manhattan norm. Finally, given two vectors $X=(x_i)_i$ and $Y=(y_i)_i$ of $\R^I$, we write $X\leq Y$ whenever $x_i \leq y_i$ for all $i$. We extend this order relation to $\R^I$ valued functions $f(X)$ and write, for a real number $c$, $f(X) \leq c$ when $f(X) \leq C=(c,\dots,c)$. The same convention is used
 for $<$.

\subsection{Main application}\label{maina}

The main application of the methods developed in this work is
a new theorem of existence of (very) weak solutions for systems with the same structure as in \cite{deslepmou}:

\begin{align}
\label{eq:u:reac}\partial_t u_1 -\Delta \Big[ u_1\,(d_1 + u_2^{\gamma_2}) \Big] = u_1 \,(\rho_1 -u_1^{s_{11}}-u_2^{s_{12}}),& \quad\text{ on }\R_+\times\Omega \\
\label{eq:v:reac}\partial_t u_2 -\Delta \Big[ u_2\,(d_2 + u_1^{\gamma_1}) \Big] = u_2 \, (\rho_2 -u_2^{s_{22}}-u_1^{s_{21}}),& \quad\text{ on }\R_+\times\Omega \\
\label{eq:uv_neum} \pa_n u=\pa_n v = 0,& \quad\text{ on }\R_+\times\pa \Omega.
\end{align}

We introduce the
\begin{definition}[(Very) Weak solution]\label{def:weak_sol} We consider $d_1,d_2>0$, $\rho_1, \rho_2>0$, $\gamma_1, \gamma_2>0$, and $s_{ij}>0$ ($i,j = 1,2$). Let $u_1^{in}$, $u_2^{in}$ be two nonnegative functions in $\L^1(\Omega)$. For $u_1$, $u_2$ two nonnegative functions in $\L^1_\textnormal{loc}(\R_+,\L^1(\Omega))$, we say that $(u_1,u_2)$ is a (very) weak solution of \eqref{eq:u:reac}--\eqref{eq:uv_neum} with initial conditions $(u_1^{in},u_2^{in})$ if $(u_1,u_2)$ satisfies
\begin{eqnarray}\label{def:weak_wellpose}
u_1\,[ u_1^{s_{11}} + u_2^{s_{12}} + u_2^{\gamma_{2}}]\,+\,u_2\,[ u_1^{s_{21}} + u_2^{s_{22}} + u_1^{\gamma_{1}}] \, \in\, \L^1_\textnormal{loc}(\R_+,\L^1(\Omega)),
\end{eqnarray}
and for any $\psi_1,\,\psi_2 \in \mathscr{C}^1_c(\R_+;\mathscr{C}^2_{\nu}(\overline{\Omega}))$,
\begin{equation}\begin{split}\label{def:weak_formul1}
 & - \int_{\Omega} u_1^{in}(x)\, \psi_1(0,x)\, \dd x - \int_0^{\infty}\int_{\Omega} u_1(t,x)  \,\partial_t \psi_1(t,x)\, \dd x\,\dd t \\
 & - \int_0^{\infty}\int_{\Omega} \Delta \psi_1(t,x)\,  \Big[ d_1 +  u_2(t,x)^{\gamma_2}\Big]\, u_1(t,x)\, \dd x\,\dd t \\
= &\int_0^{\infty}\int_{\Omega} \psi_1(t,x)\,u_1(t,x)\,\big(\rho_1-u_1(t,x)^{s_{11}}-u_2(t,x)^{s_{12}}\big) \, \dd x\,\dd t,
\end{split}\end{equation}
and
\begin{equation}\begin{split}\label{def:weak_formul2}
 & - \int_{\Omega} u_2^{in}(x)\, \psi_2(0,x)\, \dd x - \int_0^{\infty}\int_{\Omega} u_2(t,x) \,\partial_t \psi_2(t,x)\, \dd x\,\dd t \\
 & - \int_0^{\infty}\int_{\Omega} \Delta \psi_2(t,x)\,  \Big[ d_2 +  u_1(t,x)^{\gamma_1}\Big]\, u_2(t,x)\, \dd x\,\dd t \\
= &\int_0^{\infty}\int_{\Omega} \psi_2(t,x)\,u_2(t,x)\,\big(\rho_2-u_2(t,x)^{s_{22}}-u_1(t,x)^{s_{21}}\big) \, \dd x\,\dd t.
\end{split}\end{equation}
\end{definition}
Our theorem writes
\begin{Theo}\label{th:globalex}
Let $\Omega$ be a smooth ($\mathscr{C}^2$) bounded open subset of $\R^d$ ($d\ge 1$). Consider $\gamma_2>1$, $0<\gamma_1<1/\gamma_2$ and $\rho_i>0$, $s_{ij}>0$, $d_i>0$, $i,j=1,2$, with $s_{11}<1$, $s_{12}<\gamma_2+s_{22}/2$ and $s_{21}<2$. Let $U^{in}:=(u_1^{in},u_2^{in})\in (\L^1\cap\H^{-1}_m)(\Omega)\times (L^{\gamma_2}\cap\H^{-1}_m)(\Omega)$ be a couple of nonnegative initial data.
\par 
Then, there exists a couple $U:=(u_1,u_2)$ of nonnegative functions which is a (very) weak solution
to \eqref{eq:u:reac}--\eqref{eq:uv_neum} in the sense of Definition \ref{def:weak_sol}, and satisfies, for all $s>0$,
\begin{equation}\label{eq:estimationL2}
 \int_0^s \int_{\Omega} (u_1+u_2)\,(u_1^{\gamma_1} u_2 +u_2^{\gamma_2} u_1+ u_1+u_2) \, \dd x\,\dd t \le D_s, 
\end{equation}
\begin{equation} \label{24n}
 \sup_{t\in [0,s]} ||u_i(t, \cdot)||_{\L^1(\Omega)} \le e^{\rho_i s} ||u_i^{in}||_{\L^1(\Omega)},
 \end{equation}
\begin{equation} \label{nne}\begin{split}
 \sup_{t\in [0,s]} \int_\Omega u_2(t,\cdot)^{\gamma_2} + \int_0^s \int_{\Omega} u_2^{\gamma_2} \,\left( u_1^{s_{21}} +u_2^{s_{22}}  \right) \, \dd x \dd t &\\
 + \int_0^s \int_{\Omega} \Big\{|\nabla u_1^{\gamma_1/2}|^2  + |\nabla u_2^{\gamma_2/2}|^2 + \left|\nabla \sqrt{u_1^{\gamma_1} \,u_2^{\gamma_2}}\right|^2 \Big\} \, \dd x \dd t &\\
 \leq K_s \, ( 1+& \|u_1^{in}\|_{\L^1(\Omega)}+ \|u_2^{in}\|^{\gamma_2}_{\L^{\gamma_2}(\Omega)}).
\end{split}\end{equation}
The positive constants $K_s$ and $D_s$ used above only depend on $s$, $\Omega$ and the data of the equations ($\rho_i$, $d_i$, $\gamma_i$, $s_{ij}$). The constant $D_s$ in (\ref{eq:estimationL2}) also depends on $\|U^{in}\|_{\H^{-1}_m(\Omega)^2}$. 
Both functions $s\mapsto D_s$ and $s\mapsto K_s$ may be chosen continuous and belong in particular to $\L^\infty_\textnormal{loc}(\R_+)$.
\end{Theo}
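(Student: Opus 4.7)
The plan is to assemble three ingredients developed earlier in the paper: an explicit Lyapunov-type functional guided by Subsection \ref{subsec:genent}, a duality argument in the spirit of \cite{PierreD,Lepoutre_JMPA,deslepmou}, and the new time-discrete approximation scheme. I would start by identifying the entropy
\[
E(u_1,u_2) := \int_\Omega \Big( u_1 \log u_1 - u_1 + \frac{u_2^{\gamma_2}}{\gamma_2(\gamma_2-1)} \Big) \, \dd x.
\]
Formally testing \eqref{eq:u:reac} with $\log u_1$ and \eqref{eq:v:reac} with $\tfrac{1}{\gamma_2-1}\,u_2^{\gamma_2-1}$ and summing, the cross-diffusion terms assemble into a quadratic form $\int_\Omega (\nabla u_1,\nabla u_2)\,Q(u_1,u_2)\,(\nabla u_1,\nabla u_2)^{T}\, \dd x$. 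The key algebraic observation is that, setting $y = u_1^{\gamma_1-1}$, the control of the cross coefficient by the diagonal ones reduces to a quadratic inequality in $y$ whose discriminant is non-negative exactly when $\gamma_1\gamma_2 \le 1$. Combined with completion of squares, this produces the dissipative quantities $|\nabla u_1^{\gamma_1/2}|^2$, $|\nabla u_2^{\gamma_2/2}|^2$ and $|\nabla \sqrt{u_1^{\gamma_1} u_2^{\gamma_2}}|^2$ appearing in \eqref{nne}, while the reaction terms supply the absorption controls $u_1^{s_{11}+1}$, $u_1 u_2^{s_{12}}$, $u_2^{s_{22}+1}$, $u_1^{s_{21}} u_2$. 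Integrating the equations directly gives \eqref{24n}.

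Next, summing the two equations I would rewrite them as $\partial_t w - \Delta(\sigma w) = r_1 u_1 + r_2 u_2$, with $w = u_1+u_2$ and $\sigma = (u_1\,a_1(U) + u_2\,a_2(U))/w$, and apply the parabolic duality lemma: together with the $\H^{-1}_m$ regularity of the initial data and the entropic control on $\sigma$ (which grows like $u_1^{\gamma_1}+u_2^{\gamma_2}$), this converts into the weighted $\L^2$-type bound \eqref{eq:estimationL2}. To obtain these estimates rigorously I would invoke the new time-discrete scheme: for a step $\tau>0$, iteratively solve $(U_n - U_{n-1})/\tau - \Delta A(U_n) = R(U_n)$ by a Schauder fixed-point argument, using \textbf{H3} (which holds for the concrete map $A(u_1,u_2) = (u_1(d_1+u_2^{\gamma_2}), u_2(d_2+u_1^{\gamma_1}))$, as one verifies componentwise since each component is strictly monotone in its own variable and coercive at infinity). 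The scheme is designed precisely so that the discrete counterparts of both the entropy identity and the duality estimate persist with essentially the same constants.

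Finally, the estimates of the two previous steps, combined with time regularity of $\partial_t U$ in $\L^p_t \H^{-1}_x$ read off the equation itself, yield via Aubin--Lions strong convergence of the discrete solutions in some $\L^p_t \L^q_x$ spaces. This is enough to pass to the limit in the nonlinear terms $u_i\, u_j^{\gamma_j}$ (interpreted in the very weak sense by transferring both Laplacians onto the test function) and in the polynomial reactions $u_i^{s_{ii}+1}$, $u_i\, u_j^{s_{ij}}$; the hypotheses $s_{11}<1$, $s_{21}<2$ and $s_{12}<\gamma_2+s_{22}/2$ are precisely what ensure equi-integrability of these products after interpolation with \eqref{eq:estimationL2} and \eqref{nne}. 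The principal difficulty is the delicate interplay between the entropy structure and the duality estimates, which are \emph{of very different nature}: a naive approximation generally fails to preserve both, and this is exactly the obstruction the new time-discrete scheme is designed to overcome. A secondary difficulty is passing to the limit in $\Delta[u_1\, u_2^{\gamma_2}]$ when $\gamma_2>1$, which requires genuine strong compactness of $u_2$ in $\L^{\gamma_2+\varepsilon}_{t,x}$, at the edge of what the entropy dissipation on $|\nabla u_2^{\gamma_2/2}|^2$ delivers through Sobolev embedding.
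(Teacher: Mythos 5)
Your overall architecture (entropy $+$ duality $+$ time-discrete scheme $+$ Aubin--Lions) matches the paper's, but the central ingredient --- the entropy functional --- is wrong, and the error is not cosmetic. With $E(u_1,u_2)=\int_\Omega\big(u_1\log u_1-u_1+\tfrac{u_2^{\gamma_2}}{\gamma_2(\gamma_2-1)}\big)$, i.e.\ $\phi_1''(x_1)=1/x_1$ and $\phi_2''(x_2)=\gamma_2 x_2^{\gamma_2-2}$ (up to normalization), the dissipation matrix $\D^2(\Phi)\,\D(A)$ has diagonal entries $(d_1+u_2^{\gamma_2})/u_1$ and $\gamma_2(d_2+u_1^{\gamma_1})u_2^{\gamma_2-2}$ and off-diagonal sum $(\gamma_2+\gamma_1\gamma_2\,u_1^{\gamma_1-1})\,u_2^{\gamma_2-1}$. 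Comparing the $u_2^{2\gamma_2-2}$ coefficients for large $u_2$, positive semidefiniteness would require $(d_2+u_1^{\gamma_1})/u_1\gtrsim (\gamma_2+\gamma_1 u_1^{\gamma_1-1})^2$ for all $u_1>0$, which fails as $u_1\to\infty$ (left side $\to 0$, right side $\to\gamma_2^2$). Your discriminant argument in $y=u_1^{\gamma_1-1}$ is backwards: the quadratic $\gamma_1^2y^2-(4-2\gamma_1\gamma_2)y+\gamma_2^2$ having real roots (discriminant $16(1-\gamma_1\gamma_2)\ge0$) means the required inequality holds only for $y$ in a \emph{bounded interval}, i.e.\ the form is indefinite for $u_1$ small or large. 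The correct entropy, as in Proposition \ref{propo:ent2spec}, matches $\phi_i''(z)=a_j'(z)/z$, hence $\phi_1''(x_1)=\gamma_1 x_1^{\gamma_1-2}$ (a concave-power entropy $\propto \frac{\gamma_1}{1-\gamma_1}(x_1-x_1^{\gamma_1}/\gamma_1)$, \emph{not} $x_1\log x_1$); then the cross-diffusion block becomes exactly symmetric, equal to $u_1^{\gamma_1}u_2^{\gamma_2}D L D$ with $D=\mathrm{diag}(u_1^{-1},u_2^{-1})$ and $L=\bigl(\begin{smallmatrix}\gamma_1 & \gamma_1\gamma_2\\ \gamma_1\gamma_2 & \gamma_2\end{smallmatrix}\bigr)$, whose determinant $\gamma_1\gamma_2(1-\gamma_1\gamma_2)$ is positive uniformly in $(u_1,u_2)$ precisely when $\gamma_1\gamma_2<1$. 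Without this choice none of the gradient terms in \eqref{nne} is obtained.

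Two smaller points. First, your verification of \textbf{H3} ``componentwise, since each component is strictly monotone in its own variable and coercive'' is insufficient to conclude that $A$ is a homeomorphism of $\R_+^2$; the paper's Proposition \ref{prop:2spec} additionally uses $\det\D(A)>0$ (again the condition $\gamma_1\gamma_2<1$) to make the nested resolution $u=u_f(v)$, then $v=v_{f,g}$, well defined and monotone. Second, the compactness of $(u_2^N)$ is not obtained from Sobolev embedding of $\nabla u_2^{\gamma_2/2}$ at some critical exponent: since $\gamma_2$ may exceed $2$ the estimate on $\nabla u_2^{\gamma_2/2}$ degenerates where $u_2$ is small, and the paper instead combines the bound on $\partial_t\underline U^N$ in a negative Sobolev space with a nonlinear compactness lemma (Lemma 4.1 of \cite{mou}) to get strong convergence in $\L^2_{t,x}$; weak $\L^{1^+}$ equi-integrability of $A(\underline U^N)$ and $R(\underline U^N)$ then suffices to pass to the limit in the very weak formulation.
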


\begin{remark}
We consider in this theorem 
 the case $\gamma_2>1$, $\gamma_1<1$, which falls outside of  the scope of the systems studied in \cite{deslepmou}. 
\medskip

Note that Theorem \ref{th:globalex}  and its proof still hold when one  adds some positive constants in front of the non-linearities.
\medskip

A more technical aspect concerns the self-diffusion, that we have chosen to disregard here.
 As it was the case in \cite{deslepmou}, adding  self-diffusion terms
tends in fact 
here 
 to facilitate the study of the system, and  it 
 gives rise to extra estimates on the  gradients of the densities. 
\medskip

Note also that Theorem \ref{th:globalex} may be generalized to the case when power rate diffusion coefficients are replaced by mere functions $a_{ij}(u_i)$, with \emph{ad hoc} assumptions of regularity/increasingness/concavity on the functions $a_{ij}$. An extension to reaction coefficients $R$ different from power laws is also certainly possible.
\medskip

The condition $s_{12}<\gamma_2+s_{22}/2$ is in fact not optimal. It can be improved using different interpolations. As it will be seen, we have $(A)$ $\int_0^T\int_{\Omega} u_1^{\max(2,s_{21})} u_2^{\gamma_2}<\infty$, $(B)$ $\int_0^T\int_{\Omega} u_2^{\gamma_2+s_{22}}<\infty$ and $(C)$ $\int_0^T\int_{\Omega} u_1^{\gamma_1} u_2^{2}<\infty$. Interpolating between $(A)$ and $(B)$ leads to the condition $s_{12}<\gamma_2+s_{22}(1-1/\max(2,s_{21}))$ while interpolating between $(A)$ and $(C)$ leads to the condition $s_{12}<2-(2-\gamma_2)(1-\gamma_1)/(\max(2,s_{21})-\gamma_1)$. All in all, we get the sufficient condition
$$s_{12}<\max \bigg(\gamma_2+s_{22}/2,\gamma_2+s_{22}(1-1/s_{21}),2-(2-\gamma_2)(1-\gamma_1)/2-\gamma_1),$$
$$ 2-(2-\gamma_2)(1-\gamma_1)/(s_{21}-\gamma_1) \, \bigg).$$
In this formula, the four different expressions can lead to the best condition on $s_{12}$, depending on the coefficients $s_{21}, s_{22}, \gamma_1, \gamma_2$. For instance : if $s_{21}=1, s_{22}=2, \gamma_2=2$, the best condition is given by the first expression; if $s_{21}=4, s_{22}=2, \gamma_2=2$, the best condition is given by the second expression, if $s_{21}=1, s_{22}=1/5, \gamma_2=3/2$, the best condition is given by the third expression, if $s_{21}=4, s_{22}=1/5, \gamma_2=3/2$, the best condition is given by the fourth expression.
\medskip

Thanks to estimates \eqref{eq:estimationL2} and \eqref{nne}, we can show that the quantity $\nabla \left\{[d_1 + u_2^{\gamma_2}]u_1\right\}$ lies in the space $\L^1_{\textnormal{loc}}(\R_+,\L^1(\Omega))$, so that $u_1$ is actually a weak solution of \eqref{eq:u:reac} (in the sense that in the weak formulation \eqref{def:weak_formul1} we can replace $-\iint \Delta \psi_1 [d_1 + u_2^{\gamma_2}]u_1 $ by $ \iint \nabla \psi_1 \cdot\nabla \left\{[d_1 + u_2^{\gamma_2}]u_1\right\}$ for all $\psi_1 \in \mathscr{C}^1_c(\R_+;\mathscr{C}^2_{\nu}(\overline{\Omega}))$, and therefore by a density argument enlarge the set of test functions $\psi_1$ to $\mathscr{C}^1_c(\R_+;\mathscr{C}^1_{\nu}(\overline{\Omega}))$). If $\gamma_2\le 2$, we can use estimates \eqref{eq:estimationL2} and \eqref{nne} to show that $\nabla \left\{[d_2 + u_1^{\gamma_1}]u_2\right\}$ also lies in $\L^1_{\textnormal{loc}}(\R_+,\L^1(\Omega))$, so that $u_2$ is actually a weak solution of \eqref{eq:v:reac} (in an analogous sense).
We use for that the computation $\nabla u_2 = 2\gamma_2^{-1} u_2^{1-\gamma_2/2}\nabla u_2^{\gamma_2/2}\in \L^2_{\textnormal{loc}}(\R_+,\L^2(\Omega))\times \L^2_{\textnormal{loc}}(\R_+,\L^2(\Omega))$. However, in the case when $\gamma_2>2$, this computation does not hold true anymore,  and in this case our theorem only gives very weak solutions.
\end{remark}

\subsection{Structure of the paper}

We begin by introducing and studying a general (for $I$ species with $I\ge1$) semi-discrete scheme in Section \ref{sec:semi_discr_scheme}. More precisely, we prove the existence of a solution for the discretized system and we show estimates satisfied by the solution (some of them are uniform in the time step and some are not). Section \ref{sec:twospec} is devoted to the study of the hidden entropy structure for a class of two-species cross-diffusion systems including \eqref{eq:u:reac}--\eqref{eq:uv_neum}. We prove Theorem \ref{th:globalex} in Section \ref{sec:globweak}. Finally, we exhibit some examples of systems satisfying Assumption \textbf{H3} (which allows to use the results of Section 2) and recall some elliptic estimates in the Appendix (Section 5). Sections \ref{sec:semi_discr_scheme} and \ref{sec:twospec} are independent, whereas the proof of Theorem \ref{th:globalex} in Section \ref{sec:globweak} uses the results of Section \ref{sec:semi_discr_scheme} and relies on the entropy structure detailed in Section \ref{sec:twospec}.

\section{Semi-discrete scheme}\label{sec:semi_discr_scheme}

We begin here the presentation of general statements which will be used in the proof of Theorem \ref{th:globalex}. More precisely, we intend in this section to introduce a semi-discrete implicit scheme to approximate multi-dimensional systems of the form \eqref{eq:syst_multi}. Although many breakthroughs occurred in the mathematical understanding of cross-diffusion systems in the recent years (see \cite{Chen2006,deslepmou} and the references therein), the approximation procedure of these systems frequently leads to intricate or technical methods. The main reason is that the hidden entropy structure often relies on functionals defined on nonlinear subspaces of $\R^I$ ($\R_+^I$ for instance), and a condition as simple as ``$u_i$ is nonnegative'' is not easily kept during the approximation process.  

\vspace{2mm}

The scheme is based on the following semi-discretization ($1\leq k\leq N-1$, $N=T/\tau$, $T>0$)
\begin{equation}\label{eq:scheme}\begin{split}
\frac{U^k-U^{k-1}}{\tau} -\Delta [A(U^k)] &= R(U^k),\text{ on }\Omega,\\
\pa_n A(U^k) &= 0,\text{ on }\pa\Omega.
\end{split}\end{equation}
We introduce the
\begin{definition}[Strong solution]\label{def:strongsol}
\emph{(\textbf{H})}\textbf{.}
 Let $\tau>0$ and $U^{k-1}\in \L^\infty(\Omega)_+^I$. We say that a nonnegative vector-valued function $U^k$ is a strong solution of \eqref{eq:scheme} if $U^k$ lies in $\L^\infty(\Omega)^I$, $A(U^k)$ lies in $\H^2_{\nu}(\Omega)^I$ and the first equation in \eqref{eq:scheme} is satisfied almost everywhere on $\Omega$.
\end{definition}


\medskip

Our results concerning this scheme are summarized in the
\begin{Theo}[\textbf{H}]\label{th:existence_scheme_it}
Let $\Omega$ be a bounded open set of $\R^d$ with smooth boundary.  Fix $T>0$ and an integer $N$ large enough such that $\rho\tau<1/2$, where $\tau:=T/N$. Fix $\eta>0$ and a vector-valued function  $\L^\infty(\Omega)^I\ni U^0\geq \eta$. Then there exists a sequence of positive vector-valued functions $(U^k)_{1\leq k\leq N-1}$ in $\L^\infty(\Omega)^I$ which solve \eqref{eq:scheme} (in the sense of Definition \ref{def:strongsol}). Furthermore, it satisfies the following estimates :
for all $k\geq 1$ and  $p\in[1,\infty[$,
\begin{align}\label{es:depend_on_tau1}
U^k &\in\mathscr{C}^0(\overline{\Omega})^I, \\
U^k &\geq \eta_{A,R,\tau}  \text{ on }\overline{\Omega},\\
\label{es:depend_on_tau3} A(U^k) &\in \W^{2,p}_\nu(\Omega)^I,
\end{align}
where $\eta_{A,R,\tau}>0$ is a positive constant depending on the maps $A$ and $R$ and $\tau$, and
\begin{align}
\label{ineq:l1} \max_{0\leq k\leq N-1}\int_\Omega U^k  &\leq  2^{2\rho \tau N} \int_\Omega U^0, \\
\label{ineq:l1bis} \sum_{k=1}^{N-1} \tau \int_\Omega \left(\rho U^k - R(U^k)\right) &\leq 2^{2\rho \tau N} \int_\Omega U^0,\\
\label{for38}
\sum_{k=0}^{N-1} \tau \int_\Omega \left(\sum_{i=1}^I u_i^k\right)\left(\sum_{i=1}^I a_i(U^k) u_i^k\right) &\leq  C(\Omega,U^0,A, \rho, N\tau),
\end{align}
where $C(\Omega,U^0,A, \rho, N\tau)$ is a positive constant depending only on $\Omega$, $A$, $\rho$, $N\tau$ and $\|U^0\|_{\L^1\cap\H^{-1}_m(\Omega)}$.
\end{Theo}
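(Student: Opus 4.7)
The plan is to construct the sequence $(U^k)_{1\le k\le N-1}$ step by step via a Schauder-type fixed point, then derive the $\tau$-uniform estimates \eqref{ineq:l1}--\eqref{for38} by integration of the scheme and by a discrete version of Pierre's duality lemma.

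For the single-step existence, I assume inductively that $U^{k-1}\in\L^\infty(\Omega)^I$ is bounded below by a positive constant and seek $U^k\ge 0$ solving
\begin{equation*}
U^k - \tau\Delta A(U^k) = U^{k-1} + \tau R(U^k),\qquad \partial_n A(U^k) = 0.
\end{equation*}
Schauder's theorem is applied to the map sending $\tilde U\in\L^\infty(\Omega)^I_+$ to the unique solution $U$ of the decoupled linear problems
\begin{equation*}
u_i - \tau\Delta[a_i(\tilde U)u_i] - \tau r_i(\tilde U) u_i = u_i^{k-1},\qquad \partial_n[a_i(\tilde U) u_i] = 0.
\end{equation*}
The change of unknown $v_i := a_i(\tilde U)u_i$ turns each of these into the coercive scalar equation $-\tau\Delta v_i + c_i(\tilde U)v_i = u_i^{k-1}$ with $c_i(\tilde U):=(1-\tau r_i(\tilde U))/a_i(\tilde U)$: the condition $\tau\rho<1/2$ together with \textbf{H2} ensures $c_i(\tilde U)\ge(1-\tau\rho)/\|a_i(\tilde U)\|_\infty>0$, so Lax--Milgram yields a unique $\H^1$ solution, the scalar maximum principle provides $v_i\ge 0$ (since $u_i^{k-1}\ge 0$), and elliptic regularity bounds $v_i$ in $\W^{2,p}$. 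Continuity of the fixed-point map follows from \textbf{H1} and the continuity of $A^{-1}$ from \textbf{H3}, compactness from the Sobolev embedding $\W^{2,p}\hookrightarrow\Cc^0$, and invariance of a sufficiently large $\L^\infty$ ball from a maximum-principle bound on $v_i$. Properties \eqref{es:depend_on_tau1}--\eqref{es:depend_on_tau3} are then direct consequences of the $\W^{2,p}$ regularity and of the scalar maximum principle applied componentwise, the latter generating the $(\tau,A,R)$-dependent lower bound $\eta_{A,R,\tau}$.

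The $\L^1$ bound \eqref{ineq:l1} comes directly from integrating \eqref{eq:scheme} over $\Omega$: the Neumann condition $\partial_n A(U^k)=0$ annihilates the Laplacian and \textbf{H2} yields $R(U^k)\le\rho U^k$, so $(1-\tau\rho)\int_\Omega U^k\le\int_\Omega U^{k-1}$; iteration together with the elementary inequality $(1-\tau\rho)^{-1}\le 2^{2\tau\rho}$ (valid for $\tau\rho\le 1/2$) closes the argument. The identity $\tau\int_\Omega(\rho U^k - R(U^k)) = \int_\Omega U^{k-1} - (1-\tau\rho)\int_\Omega U^k$, summed in $k$ and combined with \eqref{ineq:l1}, yields \eqref{ineq:l1bis}.

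The main obstacle is the duality estimate \eqref{for38}. Writing $\sigma^k := \sum_i u_i^k$ and $S^k := \sum_i a_i(U^k)\, u_i^k$, summation of \eqref{eq:scheme} over $i$ gives the scalar scheme $\sigma^k - \sigma^{k-1} = \tau\Delta S^k + \tau\sum_i r_i(U^k) u_i^k$ with $\partial_n S^k=0$ and $\sum_i r_i u_i^k\le\rho\sigma^k$. Adapting Pierre's duality lemma to the time-discrete setting, I introduce the potential $\phi^k\in\H^1_m(\Omega)$ solving $-\Delta\phi^k = \sigma^k - \overline{\sigma^k}$ with $\partial_n\phi^k=0$, and test the scalar scheme against $\phi^k$. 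Integration by parts using $\partial_n S^k=0$ converts the diffusion term into $\tau\int_\Omega S^k(\sigma^k-\overline{\sigma^k})$; the time-difference contribution, via the identity $\int_\Omega\sigma^{k-1}\phi^k = \int_\Omega\nabla\phi^{k-1}\cdot\nabla\phi^k$ and a Cauchy--Schwarz, produces the telescoping quantity $\tfrac12\|\nabla\phi^k\|_{\L^2}^2 - \tfrac12\|\nabla\phi^{k-1}\|_{\L^2}^2$. Summing in $k$, exploiting the pointwise bound $\sigma^k S^k\ge\alpha(\sigma^k)^2$ from \textbf{H2}, and absorbing the mean-zero residual together with the reaction contribution thanks to \eqref{ineq:l1} yields \eqref{for38}; the initial term $\|\nabla\phi^0\|_{\L^2}^2\lesssim\|U^0\|^2_{\H^{-1}_m}$ accounts for the $\H^{-1}_m$-dependence of the constant $C(\Omega,U^0,A,\rho,N\tau)$. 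The technical heart lies in the careful Abel summation in time and in the control of the zero-mean residuals, which is why this step is the most delicate of the proof.
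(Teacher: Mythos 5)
Your overall architecture (single-step fixed point, then $\L^1$ integration, then a discrete duality lemma) matches the paper's, but the two hardest steps contain genuine gaps. First, the fixed-point map you define is not compact: you recover $U$ from the linearized problem by setting $u_i=v_i/a_i(\tilde U)$, and while $(v_i)_i$ is indeed relatively compact in $\Cc^0(\overline\Omega)$ via $\W^{2,p}\hookrightarrow\Cc^0$, the factor $1/a_i(\tilde U)$ is only bounded in $\L^\infty$ along a bounded sequence $(\tilde U_n)_n$ and need not converge along any subsequence; the product therefore has no reason to be relatively compact. This is precisely why the paper formulates the linear problem with $A(U)$ as the unknown and applies the \emph{continuous} map $A^{-1}$ (assumption \textbf{H3}) \emph{after} the compact resolvent $\Theta$: a continuous image of a relatively compact set stays relatively compact, whereas your pointwise division does not. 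Second, the claimed ``invariance of a sufficiently large $\L^\infty$ ball'' fails for the coefficients of interest: the maximum principle only gives $\|v_i\|_\infty\lesssim \|u_i^{k-1}\|_\infty\,\|a_i(\tilde U)\|_\infty$, hence $\|u_i\|_\infty\lesssim \alpha^{-1}\|u_i^{k-1}\|_\infty\, C(L)$ for $\|\tilde U\|_\infty\le L$, and when $a_i$ grows superlinearly (e.g.\ $a_1(U)=d_1+u_2^{\gamma_2}$ with $\gamma_2>1$, the main case of the paper) one has $C(L)/L\to\infty$, so no ball is invariant. The paper avoids this by using Leray--Schauder, which requires only an a priori bound on \emph{actual} fixed points; that bound is the real content of the existence proof (multiply the fixed-point inequality by $|A(U)|_1$, integrate, split small/large values of $|U|_1$, then upgrade to $\L^\infty$ by the elliptic lemma), and your proposal contains no substitute for it.

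For the duality estimate, your route (testing against $\phi^k$ solving $-\Delta\phi^k=\sigma^k-\overline{\sigma^k}$) also has a gap: the relation you are testing is an \emph{inequality} ($\sum_i r_i(U^k)u_i^k\le\rho\sigma^k$ only), and $\phi^k$ has no sign, so you cannot replace the reaction term by $\rho\sigma^k$ under the integral; the residual $\tau\int_\Omega\big(\sum_i r_i(U^k)u_i^k\big)\phi^k$ is not controlled by \eqref{ineq:l1} without an $\L^\infty$ or one-sided bound on $\phi^k$, which is essentially what one is trying to prove. The paper sidesteps this entirely: it sets $v^k=(1-\rho\tau)^k u^k$ to absorb the upper bound $\rho u^k$ into the time increment, sums the resulting clean inequality in $k$, and tests against the \emph{nonnegative} increment $S^n-S^{n-1}=\tau\mu^n v^n$ of the primitive $S^n=\tau\sum_{k\le n}\mu^k v^k$; nonnegativity of the test function is what legitimizes multiplying an inequality. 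Your $\L^1$ estimates \eqref{ineq:l1}--\eqref{ineq:l1bis} are essentially correct (the paper uses the sharper geometric sum $(1-\rho\tau)^{-k}$ to land exactly on the constant $2^{2\rho\tau N}$, while your version yields a slightly larger constant), but the existence and duality steps as written would not go through.
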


\begin{remark}\label{rem:deptau}
Estimates \eqref{es:depend_on_tau1}--\eqref{es:depend_on_tau3} strongly depend on $\tau$. In particular, they will be lost when we pass to the limit $\tau\rightarrow 0$ during the proof of existence of global solutions in Section~\ref{sec:globweak}.
These estimates are however crucial in order to perform rigorous computations and obtain uniform estimates on the scheme.

\medskip

Consider $T>0$ as fixed. Estimates \eqref{ineq:l1} and \eqref{ineq:l1bis} do not depend on $\tau$, since $\tau N=T$. Estimate \eqref{for38} is in fact also uniform w.r.t. $\tau$,  in the sense that it yields a limiting estimate in 
the limit $\tau \rightarrow 0$.

\medskip

The main feature of the scheme \eqref{eq:scheme} is its ability to preserve the entropy structure described in Section \ref{sec:twospec}: indeed, when such a structure exists at the continuous level, we have good hope to get similar estimates (in particular, the gradient estimates described in Remark \ref{rem:gradients}) at the semi-discrete level, uniformly in $\tau$.
 Since formalizing this property in an abstract theorem requires very
 specific assumptions, this will only be shown on a specific example in Section \ref{sec:globweak}: see Proposition \ref{propo:discrent}.

\end{remark}

The proof of the existence of the family $(U^k)_k$ solving \eqref{eq:scheme} is done in Subsection \ref{sec:exsch}. The proof of the various estimates is done in Subsection \ref{sec:apriori}.

\subsection{Existence theory for the scheme}\label{sec:exsch}

 We plan in this section to build step by step a family $(U^k)_{1\leq k\leq N-1}$ solution of \eqref{eq:scheme} (for a given $U^0$, bounded and nonnegative). For simplicity, we drop the subscripts and rewrite the scheme \eqref{eq:scheme} with the notations $U:=U^k$ and $S:=U^{k-1}$:
\begin{equation}\label{eq:scheme_k_droped}\begin{split}
U -\tau\Delta [A(U)] &= S +\tau R(U) \hspace{1cm} \text{on } \Omega,\\
\pa_n A(U) &=0 \hspace{2.78cm} \text{on } \pa\Omega.
\end{split}\end{equation}
The existence of the family $(U^k)_{1\leq k\leq N-1}$ is a consequence of the iterated use of the following Theorem
\begin{Theo}[\textbf{H}]\label{th:existence_scheme} Let $\Omega$ be a bounded open set of $\R^d$ with smooth boundary. If $S\in\L^\infty(\Omega)^I_+$, then for all $\tau>0$ such that $\rho \tau< 1/2$, there exists  $U\in \L^\infty(\Omega)^I_+$ which is a strong solution of \eqref{eq:scheme_k_droped} (in the sense of Definition \ref{def:strongsol}).
 Furthermore, this solution satisfies (for some $C(\Omega,I \|S\|_\infty)$ only depending on
$\Omega$ and $I \|S\|_\infty$) the estimate
\begin{equation*}
\|U\|_{\infty} \le \frac{1}{\alpha \tau} C(\Omega,I \|S\|_\infty).
\end{equation*}
\end{Theo}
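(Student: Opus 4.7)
The plan is to apply Schauder's fixed-point theorem, reformulating the scheme \eqref{eq:scheme_k_droped} as an elliptic problem for $V:=A(U)$ via the homeomorphism \textbf{H3}. Since $\rho\tau<1/2$, the quantity $c_i(U):=1-\tau r_i(U)$ is bounded below by $1/2$, so \eqref{eq:scheme_k_droped} is equivalent to
\begin{equation*}
c_i(U)\,u_i - \tau\Delta[a_i(U)\,u_i] = S_i \text{ in } \Omega, \qquad \partial_n[a_i(U)\,u_i]=0 \text{ on }\partial\Omega.
\end{equation*}
Given a nonnegative $\tilde V\in\mathscr{C}^0(\overline\Omega)^I$ in a closed convex ball $K$ to be determined, I would set $\tilde U:=A^{-1}(\tilde V)$ (continuous by \textbf{H3}) and define $T(\tilde V):=V$ as the unique $\H^1_\nu$-solution of the decoupled linear elliptic problem
\begin{equation*}
-\tau\Delta V_i + \frac{c_i(\tilde U)}{a_i(\tilde U)}\,V_i = S_i \text{ in }\Omega, \qquad \partial_n V_i = 0 \text{ on }\partial\Omega.
\end{equation*}
At a fixed point $\tilde V=V$, setting $U:=A^{-1}(V)$ recovers the scheme exactly.

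To verify Schauder's hypotheses, I would proceed as follows. By \textbf{H1}--\textbf{H2} the coefficient $c_i(\tilde U)/a_i(\tilde U)$ is bounded above and below by positive constants on $K$, so the linear elliptic problem is coercive and well-posed in $\H^1_\nu$. The weak maximum principle yields $V_i\geq 0$ since $S_i\geq 0$, and De Giorgi--Nash--Moser (applied with bounded measurable coefficient and $\L^\infty$ right-hand side) provides uniform $\mathscr{C}^{0,\beta}$-estimates on $V_i$; combined with Arzelà--Ascoli this gives relative compactness of $T(K)$ in $\mathscr{C}^0(\overline\Omega)^I$. Continuity of $T$ follows from the continuity of $a_i,\,r_i$ together with standard stability of linear elliptic solutions with respect to their bounded measurable coefficient and $\L^\infty$ data. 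The quoted $\L^\infty$-bound on $U$ with constant $\sim 1/(\alpha\tau)$ should come from elliptic regularity applied to $-\tau\Delta V_i = S_i - (c_i/a_i) V_i$ (the $1/\tau$ accounting for the factor in front of the Laplacian), combined with the $\L^1$-bound $\int U\leq 2\int S$ obtained by integrating the scheme directly and controlling $a_i(\tilde U)$ in terms of $\|S\|_\infty$; the $\W^{2,p}$-regularity of $V_i$ then yields the claimed $\L^\infty$-bound on $U=A^{-1}(V)$ via Sobolev embedding.

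The main obstacle is the invariance $T(K)\subseteq K$: since $A$ is not assumed uniformly bounded on $\R_+^I$, the maximum-principle estimate involves $a_i(\tilde U)$, a quantity that depends on $\|\tilde V\|_\infty$ through $\tilde U=A^{-1}(\tilde V)$, so a blind choice of ball need not be stable under $T$. This can be handled cleanly by first truncating $a_i$ at some large threshold $N$ (replacing $a_i$ by $\min(a_i,N)$), applying the fixed-point argument to the truncated problem whose coefficient is uniformly bounded, and then using the $\L^1$-bound together with the elliptic $\L^\infty$-estimate (with the $1/\tau$ scaling) to show a posteriori that the truncation is not active provided $N$ is chosen large enough in terms of $\|S\|_\infty$, $\alpha$ and $\tau$; alternatively, Leray--Schauder applied to the family $\{V : V=sT(V)\}_{s\in[0,1]}$ with the same a priori estimates bypasses the explicit identification of an invariant ball.
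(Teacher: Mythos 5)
Your overall architecture is the same as the paper's: use the homeomorphism \textbf{H3} to convert the scheme into an elliptic problem for $V=A(U)$, invert a linear elliptic operator with Neumann conditions, use the maximum principle for nonnegativity and elliptic regularity for compactness, and close with Leray--Schauder plus an a priori bound on fixed points. The paper packages the map slightly differently (it applies the resolvent $(M\,\Id-\tau\Delta)^{-1}$ with a $U$-dependent constant $\overline{M}(U)$ chosen so that the input $S+\overline M(U)A(U)-U+\tau R(U)$ is nonnegative, and gets compactness from $\W^{2,p}$ regularity with $p>d/2$ rather than De Giorgi--Nash--Moser), but these are interchangeable details.

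The genuine gap is in the a priori $\L^\infty$ estimate. Your route is: integrate the scheme to get $\int_\Omega U\le 2\int_\Omega S$, then feed $-\Delta V_i\le S_i/\tau$ into an $\L^1$-to-$\L^\infty$ elliptic estimate. But that estimate requires a bound on $\|V_i\|_1=\int_\Omega a_i(U)u_i$, \emph{not} on $\int_\Omega u_i$; since $a_i$ is not assumed bounded, $\int_\Omega a_i(U)u_i$ is not controlled by $\int_\Omega u_i$ (think $a_i(U)\sim|U|^\gamma$ with $U$ huge on a small set). Your phrase ``controlling $a_i(\tilde U)$ in terms of $\|S\|_\infty$'' is circular: a pointwise bound on $a_i(U)$ presupposes the $\L^\infty$ bound on $U$ you are trying to prove. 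The truncation fallback does not close either: with $a_i^N=\min(a_i,N)$ the bound $\int a_i^N(U)u_i\le N\int u_i\le 2N\int|S|_1$ makes the resulting $\L^\infty$ bound on $U$ grow linearly in $N$, so for superlinear $a_i$ one can never verify a posteriori that the truncation is inactive. The missing ingredient is the paper's Lemma \ref{lem:estil1}: sum the coordinates of the scheme, multiply the resulting scalar inequality by $|A(U)|_1$ and integrate by parts to obtain
\begin{equation*}
\int_\Omega |A(U)|_1\,|U|_1\;\le\;2I\|S\|_\infty\int_\Omega |A(U)|_1,
\end{equation*}
then split $\int_\Omega|A(U)|_1$ into the regions $\{|U|_1\ge R\}$ (absorbed by the left-hand side after dividing by $R=4I\|S\|_\infty$) and $\{|U|_1<R\}$ (bounded by $|\Omega|\sup_{|U|_1\le R}|A(U)|_1$ using only continuity of $A$). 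This yields $\int_\Omega|A(U)|_1\le C(\Omega,I\|S\|_\infty)$, after which your subsolution estimate $-\Delta|A(U)|_1\le|S|_1/\tau$ combined with Lemma \ref{Linfinityestimate} and $|A(U)|_1\ge\alpha|U|_1$ gives the claimed $\|U\|_\infty\le C/(\alpha\tau)$. Without this step the proof does not close.
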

The proof of Theorem \ref{th:existence_scheme} is based on a fixed point method that we present in Subsection \ref{s41}.

\subsubsection{Fixed point}
{\sl{Proof of Theorem \ref{th:existence_scheme}}}\label{s41}: Our aim is to apply the Leray-Schauder fixed point Theorem, which can be stated in the following way (see for example Theorem 11.6 p.286 in \cite{gilbarg}):

\begin{Theo}\label{th:Leray-Schauder} [Leray-Schauder]
Let $\mathfrak{B}$ be a Banach space and let $\Lambda$ be a continuous and compact mapping of $[0,1]\times\mathfrak{B}$ into $\mathfrak{B}$ such that $\Lambda(0,x)=0$ for all $x\in\mathfrak{B}$. Suppose that there exists a constant $L>0$ such that
 for all $\sigma\in [0,1]$,
$$\Lambda(\sigma,x)=x \implies \|x\|_\mathfrak{B}<L.$$
Then the mapping $\Lambda(1,\cdot)$ of $\mathfrak{B}$ into itself has at least one fixed point.
\end{Theo}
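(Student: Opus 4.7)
I would deploy the Leray--Schauder topological degree for compact perturbations of the identity. On the open ball $B_L \subset \mathfrak{B}$, consider the homotopy
\[
H : [0,1] \times \overline{B_L} \longrightarrow \mathfrak{B}, \qquad H(\sigma,x) := x - \Lambda(\sigma,x).
\]
Since $\Lambda$ is jointly continuous and compact by assumption, $\Lambda([0,1]\times\overline{B_L})$ is relatively compact in $\mathfrak{B}$, so $\Lambda(\sigma,\cdot)$ is compact uniformly in $\sigma\in[0,1]$ and in each slice $H(\sigma,\cdot)$ is a compact perturbation of $\Id$. The hypothesis that every fixed point of $\Lambda(\sigma,\cdot)$ has norm strictly less than $L$ translates into $H(\sigma,x)\neq 0$ for all $(\sigma,x)\in[0,1]\times\partial B_L$, which is exactly the non-degeneracy condition ensuring that the Leray--Schauder degree $\deg(H(\sigma,\cdot),B_L,0)$ is defined for every $\sigma$.

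\textbf{Degree computation.} The uniform non-degeneracy on the boundary together with the joint continuity and compactness makes $\sigma \mapsto \Lambda(\sigma,\cdot)$ an admissible compact homotopy, and the homotopy invariance of the Leray--Schauder degree gives
\[
\deg\bigl(H(\sigma,\cdot),\, B_L,\, 0\bigr) \;=\; \deg\bigl(H(0,\cdot),\, B_L,\, 0\bigr) \qquad \text{for every }\sigma\in[0,1].
\]
The hypothesis $\Lambda(0,\cdot)\equiv 0$ yields $H(0,\cdot)=\Id$, and the normalization axiom of the degree gives $\deg(\Id,B_L,0)=1$ (because $0\in B_L$). Consequently $\deg(H(1,\cdot),B_L,0)=1\neq 0$, and the existence property of the degree supplies some $x\in B_L$ with $H(1,x)=0$, that is, $\Lambda(1,x)=x$, which is precisely the conclusion of the theorem.

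\textbf{Main obstacle.} The entire substantive content is hidden in the \emph{construction} of the Leray--Schauder degree on an infinite-dimensional Banach space and in verifying its four defining axioms (normalization, additivity, homotopy invariance, existence). The standard route is by finite-dimensional approximation of the compact operator $K := \Lambda(\sigma,\cdot)$: one replaces $K$ by finite-rank operators $K_\varepsilon$ built from an $\varepsilon$-net of its relatively compact image, defines a provisional degree via Brouwer's degree on a finite-dimensional subspace containing $\operatorname{im}(K_\varepsilon)$, and then passes to the limit $\varepsilon\to 0$ while verifying independence of the auxiliary choices thanks to the homotopy invariance of Brouwer's degree. Once this machinery is taken for granted (as the authors do, citing \cite{gilbarg}), the three-line argument above closes the proof.
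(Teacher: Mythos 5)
This statement is not proved in the paper at all: it is quoted verbatim as a classical result, with a pointer to Theorem 11.6 of \cite{gilbarg}, so there is no in-paper argument to compare yours against. Your degree-theoretic proof is correct and is the classical Leray--Schauder argument: the hypothesis on fixed points is exactly the statement that $H(\sigma,x)=x-\Lambda(\sigma,x)$ does not vanish on $[0,1]\times\partial B_L$, joint continuity and compactness of $\Lambda$ make the homotopy admissible, and homotopy invariance plus the normalization $\deg(\Id,B_L,0)=1$ (valid since $L>0$, so $0\in B_L$) force $\deg(H(1,\cdot),B_L,0)=1$ and hence a zero of $H(1,\cdot)$ in $B_L$. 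You are also right that the real work is the construction of the degree by finite-rank approximation and reduction to Brouwer's degree. It is worth noting that the proof in the cited reference takes a genuinely different, degree-free route: Gilbarg--Trudinger deduce the theorem from Schauder's fixed point theorem by building an explicit auxiliary compact map on the closed unit ball (rescaling the radial variable so that the outer annulus encodes the homotopy parameter), whose fixed points are shown, using the same a priori bound $\|x\|_{\mathfrak{B}}<L$, to correspond to fixed points of $\Lambda(1,\cdot)$. That approach buys elementarity (no degree theory needed beyond Schauder, itself reducible to Brouwer), while yours buys conceptual transparency and the stronger information that the fixed point set has nonzero degree; either is a legitimate proof of the stated theorem.
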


In order to apply Theorem \ref{th:Leray-Schauder} to our problem, we start by defining, for $U\in \L^\infty(\Omega)^I$,
\begin{align*}
&\overline{M}(U) :=  \max\left\{\frac{M_{p,\Omega}}{2\rho},\frac{2 +\tau  \operatorname*{\max}_{i=1 \cdots n} \| r_i(U) \|_\infty}{\alpha}\right\},
\end{align*}
where $p>d/2$ is fixed and $M_{p,\Omega}$ is the constant defined in Lemma \ref{lem:ell}. The quantity $\overline{M}(U)$ is well defined because $r_i$ is assumed to be continuous (so that  $r_i(U) \in\L^\infty(\Omega)$). The definition of $\overline{M}(U)$ and the fact that $\rho \tau < 1/2$ imply the two following inequalities (almost everywhere on $\Omega$)
\begin{align*}
\overline{M}(U) &> \tau M_{p,\Omega}, \\
\overline{M}(U)A(U) -U  +\tau R(U)&\geq  0,
\end{align*}
where the second (vectorial) inequality has to be understood coordinates by coordinates.

Consider now the following maps (here both $A$ and $R$ are extended
 to continuous functions on $\R^I$ by parity):
\begin{align*}
\Psi: \L^\infty(\Omega)^I &\longrightarrow \L^\infty(\Omega)^I_+ \times (\tau M_{p,\Omega}, + \infty) \\
U &\longmapsto ( S + \overline{M}(U)A(U)-U+\tau R(U) , \overline{M}(U) ),\\
&\\
\Theta : \L^\infty(\Omega)_+^I \times (\tau M_{p,\Omega}, + \infty) &\longrightarrow \L^\infty(\Omega)^I_+ \\
(U,M) &\longmapsto  (M \Id-\tau\Delta)^{-1} U, \\
&\\
\Phi: \L^\infty(\Omega)_+^I &\longrightarrow \L^\infty(\Omega)_+^I \\
 U &\longmapsto A^{-1}(U),
\end{align*}
where the inverse operator in the definition of $\Theta$ has to be understood with homogeneous Neumann boundary conditions on $\partial\Omega$ and in the strong sense (that is, $\Theta(U,M)\in \H^2_{\nu}(\Omega)^I$ and $(M \Id-\tau\Delta)(\Theta(U,M))$ $=U$ a.e. on $\Omega$). 
\medskip

Notice that  for all $M$, $\Theta(\cdot, M)$ indeed sends $\L^\infty(\Omega)^I$ to $\L^\infty(\Omega)^I$ thanks to Lemma \ref{lem:ell} of the Appendix, since $p>d/2$, and 
remembering Sobolev embedding $\W^{2,p}(\Omega)\hookrightarrow\L^\infty(\Omega)$. Using the maximum principle, we also can see that $\Theta(\cdot, M)$ preserves the nonnegativeness of the components, so that $\Theta(\L^\infty(\Omega)_+^I \times (\tau M_{p,\Omega}, + \infty)) \subset\L^\infty(\Omega)_+^I$.
\par
 We can therefore consider the mapping  $\Phi\circ\Theta \circ \Psi$,
 and it is clear that any fixed point of this mapping will give us a solution of the discretized system \eqref{eq:scheme_k_droped}.  We hence plan to apply the Leray-Schauder  Theorem to prove the existence of such a fixed point. We consider for this purpose the map $\Lambda(\sigma,\cdot) := \Phi \circ \sigma \Theta \circ \Psi$. We obviously have $\Lambda(0,\cdot) = 0$. Let us first check the continuity and compactness of $\Lambda$, and then look for a uniform estimate for the fixed points of the applications $\Lambda(\sigma,\cdot)$, to prove that $\Lambda(1,\cdot)=\Phi\circ\Theta \circ \Psi$ indeed has a fixed point.

\subsubsection{Compactness and continuity of $\Lambda$.}

\begin{lem}\emph{(\textbf{H})}\label{le:S_compact}
The map $\Lambda : [0,1]\times \L^\infty(\Omega)^I\rightarrow \L^\infty(\Omega)^I$ is compact and continuous.
\end{lem}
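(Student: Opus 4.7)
The plan is to factor $\Lambda(\sigma,\cdot)=\Phi\circ(\sigma\Theta)\circ\Psi$ and verify that each of the three building blocks is continuous, using the smoothing property of $\Theta$ to produce the required compactness. Throughout I fix the exponent $p>d/2$ already chosen in the definition of $\overline{M}$.

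First I would establish the continuity of $\Psi$ on $\L^\infty(\Omega)^I$. Since $A$ and $R$ are continuous on $\R^I$ (after the parity extension), the associated Nemytskii operators $U\mapsto A(U)$ and $U\mapsto R(U)$ are continuous from $\L^\infty(\Omega)^I$ into itself: if $U_n\to U$ in $\L^\infty$, the pointwise values stay in a compact box, on which $A$ and $R$ are uniformly continuous, so the composition converges uniformly. The same argument shows that $U\mapsto\|r_i(U)\|_\infty$, and therefore $U\mapsto\overline{M}(U)$, is continuous from $\L^\infty(\Omega)^I$ to $\R$. Combining these, $\Psi$ is continuous from $\L^\infty(\Omega)^I$ into $\L^\infty(\Omega)_+^I\times(\tau M_{p,\Omega},+\infty)$.

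Next I would analyze $\Theta$. By the elliptic regularity statement of Lemma \ref{lem:ell} (applied coordinate-wise to the Neumann problem $(M\,\Id-\tau\Delta)w=V$), one has $\Theta(V,M)\in\W^{2,p}_\nu(\Omega)^I$ together with a norm bound $\|\Theta(V,M)\|_{\W^{2,p}}\leq C(M,\tau,\Omega,p)\,\|V\|_\infty$, in which the constant depends continuously on $M$ as long as $M$ stays in a compact subinterval of $(\tau M_{p,\Omega},+\infty)$. Because $p>d/2$ the embedding $\W^{2,p}(\Omega)\hookrightarrow\mathscr{C}^0(\overline{\Omega})$ is compact (Rellich-Kondrachov), so $\Theta$ maps bounded subsets of $\L^\infty(\Omega)^I\times[M_0,M_1]$ (with $M_0>\tau M_{p,\Omega}$) to relatively compact subsets of $\L^\infty(\Omega)^I$; continuity of $\Theta$ on this same domain follows from the continuous dependence of the resolvent on its spectral parameter. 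Finally, the uniform-continuity argument of the first paragraph, now applied to the continuous inverse $A^{-1}$ provided by \textbf{H3}, shows that $\Phi$ is continuous from $\L^\infty(\Omega)_+^I$ into itself.

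It remains to assemble these facts. Take a bounded family $(\sigma_n,U_n)$ in $[0,1]\times\L^\infty(\Omega)^I$ with $\|U_n\|_\infty\leq B$. The values of $U_n$ lie in the compact box $[-B,B]^I$, so continuity of $r_i$ gives a uniform bound $\|r_i(U_n)\|_\infty\leq C_B$, hence $\overline{M}(U_n)\in\bigl[\tfrac{M_{p,\Omega}}{2\rho},\,\tfrac{2+\tau C_B}{\alpha}\bigr]$; crucially, the assumption $\rho\tau<1/2$ guarantees $M_{p,\Omega}/(2\rho)>\tau M_{p,\Omega}$, so this interval stays strictly inside the admissible range for $\Theta$. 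Consequently $\Psi(U_n)$ is bounded in $\L^\infty(\Omega)^I\times[M_0,M_1]$, and by the previous step $\Theta(\Psi(U_n))$ is relatively compact in $\L^\infty(\Omega)^I$. Since $\sigma_n$ admits a convergent subsequence in $[0,1]$, $\sigma_n\Theta(\Psi(U_n))$ also admits a subsequence converging in $\L^\infty$; applying the continuous map $\Phi$ preserves this convergence, which proves the compactness of $\Lambda$. Joint continuity of $\Lambda$ in $(\sigma,U)$ is a direct consequence of the continuity of each factor together with the boundedness of $\sigma$.

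The one delicate point of the argument, and what I would expect to be the main obstacle, is to keep $\overline{M}(U)$ uniformly bounded \emph{away from} the threshold $\tau M_{p,\Omega}$ as $U$ ranges over a bounded set of $\L^\infty$: without this, the elliptic estimate used to derive both the $\W^{2,p}$ bound and the continuity of $\Theta$ would degenerate. This is precisely the reason for the seemingly ad hoc first term $M_{p,\Omega}/(2\rho)$ in the definition of $\overline{M}$ and for the smallness condition $\rho\tau<1/2$, which together force $\overline{M}(U)$ into a fixed compact subinterval of $(\tau M_{p,\Omega},+\infty)$.
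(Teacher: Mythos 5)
Your proposal is correct and follows essentially the same route as the paper: continuity of $\Psi$ and $\Phi$ via uniform continuity of $A$, $R$, $A^{-1}$ on compact boxes, and compactness of $\Theta$ via the elliptic $\W^{2,p}$ estimate of Lemma \ref{lem:ell} combined with the compact embedding $\W^{2,p}(\Omega)\hookrightarrow\L^\infty(\Omega)$ for $p>d/2$. The only cosmetic difference is that where you invoke continuous dependence of the resolvent on its parameter, the paper makes this explicit with a maximum-principle bound $\|\Theta(U,M)-\Theta(U',M')\|_\infty\leq\frac{1}{\tau M_{p,\Omega}}\left(\|U-U'\|_\infty+|M-M'|\,\frac{\|U'\|_\infty}{\tau M_{p,\Omega}}\right)$, which is the same estimate.
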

\begin{proof}
Thanks to the Heine-Cantor Theorem and the continuity of $A$, $R$ and $A^{-1}$ (see assumptions \textbf{H}), we see that both $\Phi$ and $\Psi$ are continuous. It is hence sufficient to prove the continuity and compactness of $\sigma \Theta$ from $[0,1]\times\L^\infty(\Omega)^I_+\times (\tau M_{p,\Omega}, + \infty)$ to $\L^\infty(\Omega)_+^I$. The compactness of this mapping is a straightforward consequence of Lemma \ref{lem:ell} of the Appendix
 together with the corresponding Sobolev embeddings. 
\par
For the continuity,
let us define  $\tilde U:=\Theta(U,M)$. By maximum principle, we see that 
$$
\|\tilde U\|_\infty \leq \frac{\|U\|_\infty}{M}\leq \frac{\|U\|_\infty}{\tau M_{p,\Omega}}.
$$
Furthermore, for given $(U,M),(U',M')$, defining similarly $\tilde U':=\Theta(U',M')$, we can write
$$
M(\tilde U-\tilde U')-\tau\Delta (\tilde U-\tilde U')=(U-U')+(M'-M)\tilde U',\quad \partial_n(\tilde U-\tilde U')=0.
$$
Still by maximum principle, we immediately get
\begin{align*}
\|\tilde U-\tilde U'\|_\infty&\leq \frac{1}{M}\left(\|U-U'\|_\infty +|M'-M|\|\tilde U'\|_\infty\right)\\
&\leq \frac{1}{\tau M_{p,\Omega}}\left(\|U-U'\|_\infty +|M'-M|\frac{\|U'\|_\infty}{\tau M_{p,\Omega}}\right),
\end{align*}
which yields the continuity of the application $\Theta$, and thereby of the application $\Lambda$.
\end{proof} 

\subsubsection{Estimates on fixed points of $\Lambda(\sigma,\cdot)$.}

In order to apply the Leray-Schauder fixed-point Theorem, we need an \emph{a priori} estimate (uniform in $\sigma$) on the fixed points of $\Lambda(\sigma,\cdot)$.  The fixed point equation is rewritten as
\begin{align*}
\overline{M}(U)A(U)-\tau\Delta [A(U)]=\sigma (S + \overline{M}(U)A(U)-U+\tau R(U)).
\end{align*}

We prove the 
\begin{lem}\emph{(\textbf{H})}\label{lem:estil1}
For any $\sigma \in [0,1]$, any fixed point $U\in \L^\infty(\Omega)^I$ of $\Lambda(\sigma, .)$ satisfies
\begin{align*}
\int_\Omega  |A(U)|_1 \leq C(\Omega,I \|S\|_\infty).
\end{align*}
\end{lem}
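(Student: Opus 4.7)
The plan is to test the fixed-point equation against the constant function $\mathbf{1}$, i.e.\ to integrate it componentwise over $\Omega$. I first rewrite $U = \Lambda(\sigma,U) = \Phi\circ\sigma\Theta\circ\Psi(U)$ in its explicit elliptic form
\begin{equation*}
\overline M(U)\,A(U) - \tau\Delta A(U) \;=\; \sigma\bigl(S + \overline M(U)\,A(U) - U + \tau R(U)\bigr),
\end{equation*}
coupled with the homogeneous Neumann condition $\partial_n A(U) = 0$, which is automatic since $A(U) = \sigma\,\Theta(\Psi(U)) \in \H^2_\nu(\Omega)^I$ by construction of $\Theta$. Integrating over $\Omega$, the divergence theorem makes the Laplacian contribution vanish.

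Summing the resulting $I$ scalar identities and regrouping the $\overline M(U)\,A(U)$ contributions produces
\begin{equation*}
(1-\sigma)\,\overline M(U)\int_\Omega |A(U)|_1 \;+\; \sigma\int_\Omega |U|_1 \;-\; \sigma\tau\int_\Omega \sum_{i=1}^I r_i(U)\,u_i \;=\; \sigma\int_\Omega |S|_1.
\end{equation*}
Hypothesis \textbf{H2} ($r_i \le \rho$), combined with the scheme assumption $\tau\rho<1/2$ of Theorem~\ref{th:existence_scheme}, absorbs the reaction term via $-\sigma\tau\sum_i r_i(U)u_i \geq -(\sigma/2)\,|U|_1$, and the crude bound $\int_\Omega |S|_1 \leq I\,|\Omega|\,\|S\|_\infty$ then yields
\begin{equation*}
(1-\sigma)\,\overline M(U)\int_\Omega |A(U)|_1 \;+\; \frac{\sigma}{2}\int_\Omega |U|_1 \;\leq\; \sigma\,I\,|\Omega|\,\|S\|_\infty.
\end{equation*}
Both summands on the left are non-negative, so each is separately controlled by the right-hand side; the built-in lower bound $\overline M(U) \geq M_{p,\Omega}/(2\rho)$, which is part of the very definition of $\overline M$, then converts the first summand into the sought $\L^1$ bound on $A(U)$.

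The main obstacle is that this method produces a prefactor $\sigma/(1-\sigma)$ which degenerates at $\sigma=1$, precisely where the two $\overline M(U)A(U)$ terms cancel and the equation collapses to the genuine semi-discrete system $U - \tau\Delta A(U) = S + \tau R(U)$. To make the bound truly independent of $\sigma\in[0,1]$ (as required for the Leray--Schauder step), one will have to treat $\sigma$ near $1$ separately: a natural route is to apply the elliptic estimate of Lemma~\ref{lem:ell} to the Poisson-type equation $-\tau\Delta A(U) = S - U + \tau R(U)$, in order to control $A(U)-\overline{A(U)}$, and then to bound the mean $\overline{A(U)}$ using the uniform $\L^1$-bound $\int_\Omega |U|_1 \leq 2\,I\,|\Omega|\,\|S\|_\infty$ that the same integration identity already provides on the other non-negative summand.
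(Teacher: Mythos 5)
Your first step (integrating the fixed-point equation against $\mathbf{1}$) is correct and does yield the useful bound $\int_\Omega|U|_1\le 2I|\Omega|\|S\|_\infty$, but the resulting control of $\int_\Omega|A(U)|_1$ carries the factor $\sigma/(1-\sigma)$ and, as you note, degenerates at $\sigma=1$ --- which is the only value of $\sigma$ that actually matters for the existence result. The repair you sketch does not close the gap. First, since $A(U)\ge 0$ componentwise, one has $|\Omega|\,|\overline{A(U)}|_1=\int_\Omega|A(U)|_1$ exactly, so splitting $A(U)$ into its mean and its oscillation is vacuous here: ``bounding the mean'' \emph{is} the original problem. Second, an $\L^1$ bound on $U$ cannot by itself control $\int_\Omega|A(U)|_1$, because under \textbf{(H)} the $a_i$ are merely continuous and may grow superlinearly (in the application $a_1(u_1,u_2)=d_1+u_2^{\gamma_2}$ with $\gamma_2>1$): concentrating $U$ on a small set makes $\int_\Omega|A(U)|_1$ arbitrarily large at fixed $\|U\|_1$. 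Finally, the elliptic route via Lemma~\ref{Linfinityestimate} applied to $-\tau\Delta|A(U)|_1\le\sigma|S|_1$ gives $\|A(U)\|_\infty\lesssim \tau^{-1}\|S\|_\infty+\|A(U)\|_1$, which presupposes the very $\L^1$ bound you are trying to prove (this is precisely how the paper's Corollary~\ref{cor:fixed_points} is derived \emph{after} the lemma).

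The paper's mechanism is different in one decisive respect: instead of testing the summed inequality $\overline{M}(U)(1-\sigma)|A(U)|_1+\sigma(1-\tau\rho)|U|_1-\tau\Delta|A(U)|_1\le\sigma|S|_1$ against $\mathbf{1}$, it multiplies it by $|A(U)|_1$ before integrating. The terms with the $(1-\sigma)$ prefactor and the gradient term are nonnegative and are simply discarded, and the surviving product term gives, uniformly in $\sigma\in(0,1]$, the weighted estimate $\int_\Omega|A(U)|_1|U|_1\le 2I\|S\|_\infty\int_\Omega|A(U)|_1$. This says that the mass of $|A(U)|_1$ must sit where $U$ is small; one then splits $\Omega$ into $\{|U|_1\ge R\}$ and $\{|U|_1<R\}$ with $R=4I\|S\|_\infty$, absorbs the large-$U$ part into the left-hand side, and on the small-$U$ part uses only the continuity of $A$ (hence $|A(U)|_1\le C_1(R)$ pointwise there) to conclude. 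You would need to incorporate this weighted test function, or an equivalent device, for your argument to survive at $\sigma=1$.
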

\begin{proof}
Notice first that such fixed points are necessarily nonnegative, so that $R(U) \leq \rho U$ (assumption \textbf{H2}) and
\begin{align*}
|A(U)|_1 = \sum_{i=1}^I a_i(U)u_i \in \H^2_{\nu}(\Omega).
\end{align*}
The equations associated to the fixed point imply the following vectorial inequality: 
\begin{align*}
\overline{M}(U) (1-\sigma) A(U) + \sigma (1- \tau \rho) U -\tau \Delta [A(U)]\leq \sigma S, \quad \text{ a.e. on }\Omega.
\end{align*}
Summing up each coordinate of this inequality, we get 
\begin{align}
\label{ineq:fixpoint}\overline{M}(U) (1-\sigma) |A(U)|_1 + \sigma (1- \tau \rho) |U|_1 -\tau \Delta |A(U)|_1\leq \sigma |S|_1, \quad \text{ a.e. on }\Omega,
\end{align}
which, after multiplication by $|A(U)|_1$ and integration on $\Omega$, leads to
\begin{align*}
(1-\sigma)\int_\Omega \bar{M}(U)|A(U)|_1^2 +\sigma (1- \rho \tau) \int_\Omega |A(U)|_1 |U|_1  &+\tau\int_\Omega |\nabla |A(U)|_1 |^2 \\
&\le \sigma\int_\Omega |S|_1 |A(U)|_1\,.
\end{align*}
For $\sigma=0$, we have $U=0=A(U)$, otherwise we get 
\begin{align*}
(1- \rho \tau) \int_\Omega |A(U)|_1 |U|_1 &\leq \int_\Omega |S|_1 |A(U)|_1   \leq I\|S\|_\infty \int_\Omega |A(U)|_1,
\end{align*}
whence, since $\rho\tau <1/2$, 
\begin{align}
\label{ineq:cut}\int_\Omega |A(U)|_1 |U|_1  \leq 2I\|S\|_\infty \int_\Omega |A(U)|_1.
\end{align}
Thanks to the continuity of $A$, we see that for all $R>0$, there exists $C_1(R)>0$
such that
\begin{align*}
|U|_1 \leq R \Longrightarrow |A(U)|_1 \leq C_1(R),
\end{align*}
so that small values of $U$ will be handled. On the other hand, we have 
\begin{align*}
\int |A(U)|_1 |U|_1 &\geq \int_{|U|_1 \geq R} |A(U)|_1 |U|_1 \geq R\int_{|U|_1 \geq R} |A(U)|_1.
\end{align*}
Now cutting the r.h.s. of \eqref{ineq:cut} in two terms, corresponding to  small and large values of $U$, we get
\begin{align}\label{comp:cut_integral}
\int_\Omega |A(U)|_1|U|_1 &\leq 2I\|S\|_\infty \left(\int_{|U|_1 \geq R} |A(U)|_1 + \int_{|U|_1 < R} |A(U)|_1  \right)\\
&\leq \frac{2I\|S\|_\infty}{R} \int_\Omega |A(U)|_1 |U|_1 + 2 I \|S\|_\infty |\Omega| C_1(R).
\end{align} 
Taking $R=4 I\|S\|_\infty$, we have
\begin{align*}
\int_\Omega |A(U)|_1|U|_1 \leq 4 I \|S\|_\infty |\Omega| C_1(R).
\end{align*} 
Reusing computation \eqref{comp:cut_integral}, we eventually get
\begin{align*}
\int_\Omega |A(U)|_1&\leq \int_{|U|_1 \geq R} |A(U)|_1 + \int_{|U|_1 < R} |A(U)|_1  \\
&\leq \frac{1}{R} \int_\Omega |A(U)|_1 |U|_1 + |\Omega| C_1(R)\\
&\leq \left(\frac{4 I \|S\|_\infty }{R}  + 1 \right)|\Omega| C_1(R),
\end{align*} 
which gives the conclusion with $C(\Omega,I\|\S\|_\infty) = 
2\, |\Omega|\, C_1(4\, I\,\|\S\|_\infty)$.
\end{proof}

\begin{cor}\emph{(\textbf{H})}\label{cor:fixed_points}
There exists a constant $C(\Omega, I\|S\|_\infty)$ depending only on $\Omega$ and $I\|S\|_\infty$ such that for any $\tau>0$ with $\rho \tau <1/2$, for all $\sigma\in [0,1]$ and for all $U\in \L^\infty(\Omega)$,
\begin{align*}
\Lambda(\sigma,u)=u \Longrightarrow \|U\|_\infty \leq \frac{1}{\alpha \tau} C(\Omega, I\|S\|_\infty). 
\end{align*}
\end{cor}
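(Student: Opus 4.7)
The plan is to combine the $\L^1$ estimate from Lemma \ref{lem:estil1} with elliptic regularity applied to the strong form of the fixed-point equation, and to convert the resulting bound on $A(U)$ into a bound on $U$ using $a_i \ge \alpha$.

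First I would recall that the fixed-point identity $U = \Phi\circ\sigma\Theta\circ\Psi(U)$ is equivalent to the statement that $A(U) \in \H^2_\nu(\Omega)^I$ and solves strongly on $\Omega$
$$
\overline{M}(U)\, A(U) - \tau \Delta A(U) \;=\; \sigma\bigl[\,S + \overline{M}(U)\,A(U) - U + \tau R(U)\,\bigr],
$$
with $\pa_n A(U) = 0$ on $\pa\Omega$. Summing this vectorial equation componentwise and using $r_i(U) \le \rho$ together with $u_i \ge 0$ and $\tau\rho < 1/2$, every term on the right-hand side other than $\sigma|S|_1$ can be absorbed into a nonnegative contribution on the left, yielding the scalar one-sided inequality
$$
-\tau\Delta\, |A(U)|_1 \;\le\; \sigma\,|S|_1 \;\le\; I\|S\|_\infty \quad\text{on }\Omega, \qquad \pa_n|A(U)|_1 = 0 \text{ on }\pa\Omega.
$$
From Lemma \ref{lem:estil1} we also have the $\L^1$ control $\int_\Omega |A(U)|_1 \le C(\Omega, I\|S\|_\infty)$.

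Next I would upgrade this into an $\L^\infty$ bound by invoking Lemma \ref{lem:ell}. The operator $\overline{M}(U)\,\Id - \tau\Delta$ is admissible in Lemma \ref{lem:ell} since $\overline{M}(U) > \tau M_{p,\Omega}$ for the fixed $p>d/2$ chosen earlier. Applying the elliptic $\W^{2,p}$ estimate to the equation satisfied by $|A(U)|_1$ and combining with the $\L^1$ mean control (so that the mean part is bounded by $C(\Omega, I\|S\|_\infty)$), and then using the Sobolev embedding $\W^{2,p}(\Omega) \hookrightarrow \L^\infty(\Omega)$, one obtains
$$
\bigl\||A(U)|_1\bigr\|_\infty \;\le\; \frac{1}{\tau}\,C(\Omega, I\|S\|_\infty),
$$
where the factor $1/\tau$ appears precisely because the Laplacian enters the inequality with coefficient $\tau$ (equivalently, rewriting $-\Delta |A(U)|_1 \le I\|S\|_\infty/\tau$ and applying elliptic regularity with Neumann boundary conditions). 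Finally, assumption \textbf{H2} gives pointwise $a_i(U)\,u_i \ge \alpha\, u_i$, so for each $i$
$$
\|u_i\|_\infty \;\le\; \tfrac{1}{\alpha}\,\|a_i(U)u_i\|_\infty \;\le\; \tfrac{1}{\alpha}\bigl\||A(U)|_1\bigr\|_\infty \;\le\; \frac{1}{\alpha\tau}\,C(\Omega, I\|S\|_\infty),
$$
which is the desired estimate.

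The main obstacle is the rigorous passage from the one-sided elliptic inequality together with an $\L^1$ mean-value bound to the $\L^\infty$ estimate with the correct $1/\tau$ dependence. Since the Neumann problem for $-\Delta$ has constants in its kernel, one must split $|A(U)|_1 = \bar v + w$ with $\int_\Omega w = 0$ (the mean $\bar v$ being already controlled by Lemma \ref{lem:estil1}) and apply Lemma \ref{lem:ell} to the zero-mean part $w$; alternatively, one uses directly the equation (not the inequality) satisfied by $A(U)$, whose right-hand side is nonnegative by construction of $\overline{M}(U)$, so that Lemma \ref{lem:ell} applies without sign issues. Once this technical point is handled, the rest of the proof is bookkeeping.
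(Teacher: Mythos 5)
Your proposal follows essentially the same route as the paper: derive the one-sided inequality $-\Delta|A(U)|_1\le |S|_1/\tau$ from the summed fixed-point equation, combine it with the $\L^1$ bound of Lemma \ref{lem:estil1} to get an $\L^\infty$ bound on $|A(U)|_1$, and conclude via $a_i\ge\alpha$. The only difference is in how you upgrade the one-sided elliptic inequality plus the $\L^1$ control to an $\L^\infty$ bound: the paper simply invokes Lemma \ref{Linfinityestimate} of the Appendix, which is tailored exactly to this situation ($w\ge 0$, $-\Delta w\le f$, $w\in\H^2_\nu$ implies $\|w\|_\infty\le C(\|f\|_\infty+\|w\|_1)$), and whose proof rests on the comparison principle, Lemma \ref{lem:ell}, and an interpolation of $\|w\|_p$ between $\|w\|_1$ and $\|w\|_\infty$ absorbed by Young's inequality. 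Your two proposed mechanisms for this step are not quite right as stated: the mean/zero-mean splitting does not mesh with Lemma \ref{lem:ell}, which concerns $Mw-\Delta w=f$ rather than the pure Neumann Laplacian on mean-free functions; and applying Lemma \ref{lem:ell} ``directly to the equation'' is circular, since the right-hand side contains $\overline{M}(U)A(U)$ itself and must be absorbed by the same interpolation trick. Since the needed statement is already proved in the paper as Lemma \ref{Linfinityestimate}, this is a presentational rather than a substantive gap.
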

\begin{proof}
Thanks to the nonnegativeness of $U$ and the condition $\tau \rho <1/2$, inequality \eqref{ineq:fixpoint} implies 
\begin{align*}
- \Delta |A(U)|_1 \leq \frac{\sigma}{\tau} |S|_1 \leq \frac{1}{\tau} |S|_1, \quad \text{ a.e. on }\Omega .
\end{align*}
Applying Lemma \ref{Linfinityestimate} to $w= |A(U)|_1 = \displaystyle \sum_{i=1}^I a_i(U)u_i\ge0$, we get
\begin{align*}
0 \leq \sum_{i=1}^ I a_i(U) u_i \leq C(\Omega) \left(\frac{1}{\tau}I\|S\|_\infty + \int_\Omega |A(U)|_1 \right), \quad \text{ a.e. on }\Omega,
\end{align*}
and the conclusion follows thanks to Lemma \ref{lem:estil1}, the nonnegativeness of $U$, and Assumption \textbf{H2} : $a_i$ is lower bounded by $\alpha>0$.
\end{proof}

\subsubsection{End of the proof of Theorem \ref{th:existence_scheme}.}
\begin{proof}[End of the proof of Theorem \ref{th:existence_scheme}]
We just invoke Theorem \ref{th:Leray-Schauder}, and we use Lemma \ref{le:S_compact} and Corollary \ref{cor:fixed_points} to check the assumptions on $\Lambda$. 
\end{proof}

\subsection{Estimates for the scheme}\label{sec:apriori}

Applying iteratively Theorem \ref{th:existence_scheme} we get the existence of the family $(U^k)_k$ in Theorem~\ref{th:existence_scheme_it}. In particular, we already know that it satisfies for all $k\ge 1$,
\begin{align}
\label{uk1} U^0 &\ge \eta >0,\\
\label{uk2}U^k &\geq 0,\text{ and }U^k\in\L^\infty(\Omega)^I,\\
\label{uk3}A(U^k) &\in\H^2_{\nu}(\Omega)^I.
\end{align}
In this subsection, we prove that the family $(U^k)_k$ satisfies estimates \eqref{es:depend_on_tau1}--\eqref{for38}.
\subsubsection{Non uniform estimates (\emph{i.e.} depending on $\tau$)}

Let us first give a few properties of regularity for the family $(U^k)_{1\leq k\leq N-1}$.
\begin{Propo}[\textbf{H}]\label{prop:estireg}
For all $k\geq 1$ and  $p\in[1,\infty[$,
\begin{align*}
U^k &\in\mathscr{C}^0(\overline{\Omega})^I, \\
U^k &\geq \eta_{A,R,\tau}  \text{ on }\overline{\Omega},\\
A(U^k) &\in \W^{2,p}_\nu(\Omega)^I,
\end{align*}
where $\eta_{A,R,\tau}>0$ is a strictly positive constant depending on the maps $A$ and $R$ and $\tau$.
\end{Propo}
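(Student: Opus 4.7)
The plan is to establish the three assertions for each fixed $k$ by a standard elliptic bootstrap followed by a maximum-principle argument for the strict positivity, proceeding inductively in $k$.

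First, Theorem \ref{th:existence_scheme} already provides $U^k \in \L^\infty(\Omega)^I_+$ together with $A(U^k) \in \H^2_\nu(\Omega)^I$. The first step is to rewrite the scheme as
$$-\tau \Delta A(U^k) + A(U^k) = A(U^k) + U^{k-1} - U^k + \tau R(U^k),$$
whose right-hand side is in $\L^\infty(\Omega)^I$ (by \textbf{H1} combined with the $\L^\infty$ bounds on $U^k, U^{k-1}$). The elliptic regularity estimate recalled in Lemma \ref{lem:ell} for the operator $-\tau\Delta + \Id$ with homogeneous Neumann boundary conditions then lifts $A(U^k)$ from $\H^2_\nu$ to $\W^{2,p}_\nu(\Omega)^I$, for every $p\in [1,\infty[$. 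Choosing $p>d/2$ and using $\W^{2,p}\hookrightarrow \mathscr{C}^0(\overline{\Omega})$ gives $A(U^k) \in \mathscr{C}^0(\overline{\Omega})^I$; since by \textbf{H3} the map $A$ is a homeomorphism of $\R_+^I$ onto itself and $U^k\geq 0$, we conclude that $U^k = A^{-1}(A(U^k)) \in \mathscr{C}^0(\overline{\Omega})^I$.

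For the strict positivity I would argue by induction on $k$. The case $k=0$ is the hypothesis $U^0 \geq \eta>0$. Assuming $U^{k-1} \geq \eta_{k-1}>0$, the trick is to change the unknown from $u_i^k$ to $w_i := a_i(U^k)\, u_i^k$; setting $\nu_i := (1-\tau r_i(U^k))/a_i(U^k)$, the $i$-th line of the scheme becomes
$$-\tau \Delta w_i + \nu_i\, w_i = u_i^{k-1},\qquad \partial_n w_i = 0.$$
By \textbf{H2} and the constraint $\rho\tau<1/2$ we have $1-\tau r_i(U^k)\geq 1/2$, while $a_i(U^k)$ is bounded below by $\alpha$ (by \textbf{H2}) and above by some finite $A_{\max,k}$ (by \textbf{H1} and $U^k\in\L^\infty$); hence $\nu_i\in \L^\infty(\Omega)$ with $\nu_i \geq 1/(2A_{\max,k})>0$. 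Applying the weak Neumann maximum principle to the difference $w_i - \eta_{k-1}/\|\nu_i\|_\infty$ (which satisfies $-\tau\Delta(\cdot) + \nu_i(\cdot) \geq 0$ with Neumann boundary conditions) yields $w_i \geq \eta_{k-1}/\|\nu_i\|_\infty$, whence $u_i^k \geq \eta_{k-1}/(\|\nu_i\|_\infty A_{\max,k}) > 0$. Combined with the continuity already established, this delivers the uniform strictly positive lower bound $U^k \geq \eta_{A,R,\tau}>0$ on $\overline{\Omega}$.

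The only genuine obstacle is the positivity step: the reaction coefficients $r_i(U^k)$ are allowed to be negative, so the maximum principle cannot be applied directly to $u_i^k$. The device of passing to $w_i = a_i(U^k)\,u_i^k$ \emph{absorbs} the reaction into the zero-order coefficient $\nu_i$, which is strictly positive precisely because $\rho\tau<1/2$; this is what makes the maximum principle applicable. The remaining ingredients (elliptic bootstrap, Sobolev embedding, continuity of $A^{-1}$) are routine, the latter relying crucially on \textbf{H3}.
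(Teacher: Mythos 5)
Your proof is correct and follows essentially the same route as the paper's: an elliptic bootstrap via Lemma \ref{lem:ell} plus Sobolev embedding and \textbf{H3} for the regularity and continuity of $U^k$, and an induction on $k$ combined with a Neumann maximum principle applied to the components of $A(U^k)$ for the strict positivity. The only (cosmetic) differences are that the paper absorbs the terms $-U^k/\tau + R(U^k)$ by adding $M A(U^k)$ with $M$ large — which also guarantees $M>M_{p,\Omega}$ so that Lemma \ref{lem:ell} applies literally, a normalization your operator $-\tau\Delta+\Id$ formally skips — whereas you fold the reaction into the explicit zero-order coefficient $\nu_i=(1-\tau r_i(U^k))/a_i(U^k)$, which is an equivalent bookkeeping of the same maximum-principle argument.
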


\begin{proof}
Since $R$ is continuous, for each $k\geq 0$, $R(U^k)\in\L^\infty(\Omega)^I$. We hence can write, for any positive constant $M>0$ and any $k\geq 1$
\begin{align*}
M A(U^k) - \Delta [A(U^k)] = \frac{U^{k-1}-U^k}{\tau} + R(U^k) + M A(U^k) \in \L^\infty(\Omega)^I,
\end{align*}
so that we can directly  apply Lemma \ref{lem:ell} of the Appendix 
 to get $A(U^k) \in \W^{2,p}(\Omega)^I$ for all finite values of $p$, whence, by Sobolev embedding, $A(U^k)\in\mathscr{C}^0(\overline{\Omega})^I$, and $U^k \in\mathscr{C}^0(\overline{\Omega})^I$ thanks to Assumption \textbf{H3}.
 For the lower bound on $U^k$, we will proceed by induction and prove that if $U^{k-1} \geq \ep$ for some constant $\ep>0$, then $U^k\geq \ep'$ for another constant $\ep'>0$. The result will follow by taking the minimum of the constructed finite family. Assuming hence $U^{k-1} \geq \ep >0$ (which is true by assumption for $k=1$), because of Assumption \textbf{H2}, if $M$ is large enough,  we get $M A(U^k) -\Delta [A(U^k)] \geq U^{k-1}/\tau \geq U^{k-1} \geq \ep$. We infer hence, by the maximum principle, that $A(U^k)\geq \ep/M$, whence $U^k \geq \ep/C$, where $C= M\sup_i \|a_i(U^k)\|_\infty$ (not vanishing because of Assumption  \textbf{H2}).
\end{proof}

\subsubsection{ (Uniform) $\L^1$ estimate}

We write down  the standard $\L^1$ estimate obtained by a direct integration of the equations:
\begin{Propo}[\textbf{H}]\label{prop:l1}
Assuming that $\rho\tau <1/2$, the family $(U^k)_k$ satisfies
\begin{align}
\label{ineq:l1re} \max_{0\leq k\leq N-1}\int_\Omega U^k  &\leq  2^{2\rho \tau N} \int_\Omega U^0, \\
\label{ineq:l1bisre} \sum_{k=1}^{N-1} \tau \int_\Omega \left(\rho U^k - R(U^k)\right) &\leq 2^{2\rho \tau N} \int_\Omega U^0.
\end{align}
\end{Propo}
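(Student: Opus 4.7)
The plan is to derive both estimates by integrating the scheme \eqref{eq:scheme} componentwise over $\Omega$. The Neumann condition $\partial_n A(U^k) = 0$ makes the Laplacian term vanish via the divergence theorem, yielding the identity
\begin{equation*}
\int_\Omega U^k - \int_\Omega U^{k-1} = \tau \int_\Omega R(U^k), \qquad k = 1, \ldots, N-1.
\end{equation*}
Combining this with the nonnegativity of $U^k$ (already granted by Theorem \ref{th:existence_scheme_it}) and the pointwise bound $R(U^k) \leq \rho\, U^k$ coming from assumption \textbf{H2}, I obtain the key recurrence
\begin{equation*}
(1-\rho\tau) \int_\Omega U^k \leq \int_\Omega U^{k-1}.
\end{equation*}

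For \eqref{ineq:l1re}, I would iterate this to get $\int_\Omega U^k \leq (1-\rho\tau)^{-k} \int_\Omega U^0$ and invoke the elementary inequality $(1-x)^{-1} \leq 2^{2x}$ on $[0, 1/2]$. Equivalently, $4^{-x} \leq 1 - x$ on this interval: this holds because $x \mapsto 4^{-x}$ is strictly convex, $x \mapsto 1-x$ is affine, and the two functions coincide at the endpoints $0$ and $1/2$. Applied with $x = \rho\tau \leq 1/2$ and raised to the power $k \leq N$, this gives the stated bound.

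For \eqref{ineq:l1bisre}, I would rewrite the integrated identity as
\begin{equation*}
\tau \int_\Omega \left( \rho U^k - R(U^k) \right) = \int_\Omega U^{k-1} - (1-\rho\tau) \int_\Omega U^k \geq 0,
\end{equation*}
sum from $k=1$ to $k=N-1$ and partially telescope, to obtain
\begin{equation*}
\sum_{k=1}^{N-1} \tau \int_\Omega \left( \rho U^k - R(U^k) \right) = \int_\Omega U^0 - (1-\rho\tau) \int_\Omega U^{N-1} + \rho\tau \sum_{k=1}^{N-2} \int_\Omega U^k.
\end{equation*}
Dropping the nonnegative middle term and injecting $\int_\Omega U^k \leq r^k \int_\Omega U^0$ with $r := (1-\rho\tau)^{-1}$, the geometric sum collapses exactly: since $r/(r-1) = 1/(\rho\tau)$, one computes $\rho\tau \sum_{k=1}^{N-2} r^k = r^{N-2} - 1$, leaving precisely the bound $r^{N-2} \int_\Omega U^0 \leq 2^{2\rho\tau N} \int_\Omega U^0$.

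No serious obstacle is expected: the proof reduces to integrating the scheme, a componentwise recurrence, and elementary estimates on $(1-x)^{-1}$. The only point that requires attention is the exact telescoping and geometric-series collapse that makes the same constant $2^{2\rho\tau N}$ work for both estimates rather than introducing an additional multiplicative factor of $N\tau = T$.
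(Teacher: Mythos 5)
Your proposal is correct and follows essentially the same route as the paper: integrate the scheme over $\Omega$ (the Laplacian vanishes by the Neumann condition), use $R(U^k)\le \rho U^k$ and nonnegativity to get the recurrence $(1-\rho\tau)\int_\Omega U^k\le\int_\Omega U^{k-1}$, iterate, and collapse the resulting geometric sum via $r/(r-1)=1/(\rho\tau)$ before applying $(1-\rho\tau)^{-N}\le 2^{2\rho\tau N}$. The only (harmless) differences are presentational: you telescope the exact identity for $\tau\int_\Omega(\rho U^k-R(U^k))$ rather than bounding $-\sum\tau\int_\Omega R(U^k)$ and $\rho\sum\tau\int_\Omega U^k$ separately, and you supply the convexity argument for $(1-x)^{-1}\le 2^{2x}$ that the paper leaves implicit.
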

\begin{proof}
For \eqref{ineq:l1re}, we prove in fact the more precise estimate for $k\geq 1$:
\begin{align}
\label{ineq:l1strong}\int_\Omega U^k \leq (1-\rho\tau)^{-k} \int_\Omega U^0.
\end{align}
Indeed, thanks to Assumption \textbf{H2}, \eqref{eq:scheme} implies (almost everywhere on $\overline{\Omega}$)
\begin{align}
\label{ineq:schemineq}  \frac{U^{k}-U^{k-1}}{\tau}- \Delta A(U^k) \leq \rho U^k.
\end{align}
Integrating then \eqref{ineq:schemineq} on $\Omega$, we get
 \begin{align*}
(1-\rho \tau) \int_\Omega U^k \leq \int_\Omega U^{k-1},
\end{align*}
so that \eqref{ineq:l1strong} follows by a straightforward induction. Since $\tau N=T$ and $\rho \tau <1/2$, we have $(1-\rho \tau)^{-N}\le 2^{2 \rho \tau N}$, whence \eqref{ineq:l1re}. Integrating the first equation in \eqref{eq:scheme} on $\Omega$, and summing for $1\leq k\leq N-1$, we get
 \begin{align*}
- \sum_{k=1}^{N-1} \tau \int_\Omega R(U^k) \leq \int_\Omega U^0,
\end{align*}
so that using \eqref{ineq:l1strong},
\begin{align*}
\sum_{k=1}^{N-1} \tau \int_\Omega \left(\rho U^k - R(U^k)\right) & \leq \tau\rho \sum_{k=1}^{N-1} (1-\rho\tau)^{-k} \int_\Omega U^0 +\int_\Omega U^0 \\
&= \left[\frac{\tau\rho}{1-\rho\tau} \frac{(1-\rho\tau)^{-(N-1)}-1}{(1-\rho\tau)^{-1}-1}+1\right]\int_\Omega U^0\\
&= (1-\rho\tau)^{-(N-1)} \int_\Omega U^0 \le (1-\rho\tau)^{-N} \int_\Omega U^0
\end{align*}
and \eqref{ineq:l1bisre} follows.
\end{proof} 

\subsubsection{(Uniform) Duality estimate}

We now focus on another uniform (in $\tau$) estimate for this scheme, which is reminiscent of a  duality lemma first introduced in \cite{MaPi}, which writes:
\begin{lem}
Let $\rho>0$. Let $\mu:[0,T]\times\Omega\rightarrow\R_+$ be a continuous function lower bounded by a positive constant. Smooth nonnegative solutions of the differential inequality 
\begin{align*}
\partial_t u-\Delta (\mu u)\leq \rho u \text{ on }[0,T]\times\Omega,\\
\partial_n (\mu u)=0 \text{ on }[0,T]\times\partial\Omega,
\end{align*}
 satisfy the bound
\begin{align*}
\int_{Q_T} \mu u^2\leq\exp(2 \rho T)\times \left( \Cc_\Omega^2\, \|u^0\|_{\H^{-1}_m(\Omega)}^2+\langle u^0\rangle^2\int_{Q_T} \mu\right),
\end{align*}
where $u^0:=x\mapsto u(0,x)$, $\langle u^0 \rangle$ denotes its mean value on $\Omega$ and $\Cc_\Omega$ is the Poincar\'e-Wirtinger constant.
\end{lem}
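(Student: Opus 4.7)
The plan is to use the Pierre-Schmitt-type duality technique (as in \cite{MaPi}) adapted to the Neumann setting with the variable coefficient $\mu$. First I get rid of the reaction-like right-hand side by setting $v := e^{-\rho t} u$, which satisfies the same type of inequality but with zero on the right: $\pa_t v - \Delta(\mu v) \le 0$ on $Q_T$, together with the unchanged Neumann condition $\pa_n(\mu v) = 0$ and $v^0 = u^0$. This step will cost only a multiplicative factor $e^{2\rho T}$ in the final bound, since $u^2 \le e^{2\rho T}v^2$ on $[0,T]$.

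I then introduce the backward dual problem
\begin{equation*}
 -\pa_t W - \mu \Delta W = \mu v \text{ on } Q_T, \quad \pa_n W = 0 \text{ on } [0,T]\times\pa\Omega, \quad W(T,\cdot) = 0 \text{ on } \Omega.
\end{equation*}
Read forward in $s := T-t$, this is a parabolic equation with nonnegative source $\mu v$ and zero initial datum, so $W \ge 0$ on $Q_T$ by the maximum principle. Multiplying the inequality for $v$ by $W$, integrating by parts twice in space (both boundary contributions vanish thanks to the two Neumann conditions, on $W$ and on $\mu v$) and once in time (using $W(T,\cdot) = 0$), and then using the dual equation to rewrite $v(\pa_t W + \mu\Delta W) = -\mu v^2$, yields the core duality estimate
\begin{equation*}
 \int_{Q_T} \mu v^2 \;\le\; \int_\Omega v^0\, W(0,\cdot).
\end{equation*}

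The crux is now to bound $W(0,\cdot)$ in a form that allows the left-hand side to reappear absorbably on the right. Testing the dual equation against $-\Delta W$ and integrating in time (using $W(T)=0$) delivers
\begin{equation*}
 \|\nabla W(0,\cdot)\|_2^2 \,+\, \int_{Q_T}\mu (\Delta W)^2 \;\le\; \int_{Q_T}\mu v^2,
\end{equation*}
while integrating the dual equation over $\Omega$ in space and then in time and applying Cauchy-Schwarz twice (in $\sqrt{\mu}$ against $\sqrt{\mu}\,\Delta W$ and $\sqrt{\mu}\,v$ respectively) gives
\begin{equation*}
 \int_\Omega W(0,\cdot) \;\le\; 2\left(\int_{Q_T}\mu\right)^{\!1/2}\left(\int_{Q_T}\mu v^2\right)^{\!1/2}.
\end{equation*}

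Finally, I split the pairing as $\int_\Omega v^0 W(0) = \langle v^0\rangle \int_\Omega W(0) + \int_\Omega (v^0-\langle v^0\rangle)(W(0)-\langle W(0)\rangle)$: the mean part is controlled by the previous $\L^1$ bound on $W(0,\cdot)$, while the zero-mean part is handled via the $\H^{-1}_m$–$\H^1_m$ duality combined with the Poincar\'e-Wirtinger inequality $\|W(0)-\langle W(0)\rangle\|_2 \le \Cc_\Omega \|\nabla W(0)\|_2$, which brings in the factor $\Cc_\Omega^2$ in front of $\|u^0\|_{\H^{-1}_m}^2$. Young's inequality applied to each of the two pieces absorbs a fraction of $\int_{Q_T}\mu v^2$ back into the left-hand side, and reverting to $u$ produces the $e^{2\rho T}$ prefactor. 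The main technical point is to calibrate the Young-inequality constants so that the final bound cleanly exhibits $\Cc_\Omega^2\,\|u^0\|_{\H^{-1}_m}^2$ on one hand and $\langle u^0\rangle^2 \int_{Q_T}\mu$ on the other, without spurious cross-contributions.
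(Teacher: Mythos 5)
Your route is genuinely different from the one the paper has in mind. The paper only sketches the continuous statement (referring to \cite{Lepoutre_JMPA}) and proves in detail the discrete analogue, Lemma \ref{lem:duadis}, by the \emph{primal} time-integration trick: after removing the reaction term it integrates the inequality in time to get $v(t)-\Delta S(t)\le v^0$ with $S(t)=\int_0^t\mu v$, multiplies by $\pa_t S=\mu v\ge 0$ and integrates. You instead introduce the backward adjoint problem $-\pa_t W-\mu\Delta W=\mu v$. Your core steps are correct: the sign $W\ge 0$, the pairing $\int_{Q_T}\mu v^2\le\int_\Omega v^0\,W(0,\cdot)$ (both boundary terms do vanish), the energy bound $\|\nabla W(0)\|_2^2+\int_{Q_T}\mu(\Delta W)^2\le\int_{Q_T}\mu v^2$, and the $\L^1$ bound on $W(0,\cdot)$.

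There is, however, a genuine gap at the last step: the calibration you defer to cannot be carried out, so your argument does not reach the constants in the statement. Write $X:=\int_{Q_T}\mu v^2$. Your auxiliary estimates give $\|\nabla W(0)\|_2^2\le X$ and $\int_\Omega W(0)\le 2\left(\int_{Q_T}\mu\right)^{1/2}X^{1/2}$, so after the mean/zero-mean splitting and Young's inequality you are left with
$$X\le \frac{\Cc_\Omega^2\|u^0\|_{\H^{-1}_m}^2}{2\ep_1}+\frac{\ep_1}{2}X+\frac{\ep_2}{2}X+\frac{2}{\ep_2}\,\langle u^0\rangle^2\int_{Q_T}\mu,$$
and for any admissible $\ep_1,\ep_2>0$ with $\ep_1+\ep_2<2$ the resulting coefficient of $\langle u^0\rangle^2\int_{Q_T}\mu$ is $\frac{2}{\ep_2(1-(\ep_1+\ep_2)/2)}>4$; no choice makes both final coefficients equal to $1$. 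Two structural features of the primal proof produce the clean constants: (i) $\int_\Omega S(T)=\int_{Q_T}\mu v$ holds \emph{exactly}, i.e.\ with constant $1$ instead of your $2$; (ii) the term $\tfrac12\|\nabla S(T)\|_2^2$ created by Young is absorbed by an identical gradient term already sitting on the left-hand side of the energy identity, whereas in your scheme $\|\nabla W(0)\|_2^2$ is only controlled by $X$ itself, so absorbing it eats into the very quantity you are estimating. Your argument therefore proves the lemma with $(\Cc_\Omega^2,1)$ replaced by larger universal constants --- qualitatively equivalent, and sufficient for every use made of this estimate in the paper, but not the statement as written. A secondary point: for $\mu$ merely continuous, the strong solvability of the backward dual problem (you need $\Delta W\in\L^2$ and $W(t)\in\H^2_\nu$) requires regularizing $\mu$ and passing to the limit; the primal approach never solves an auxiliary PDE and sidesteps this.
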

\medskip

\medskip

The proof of the previous lemma can be easily adapted from the proof of lemma A.1 in Appendix A of \cite{Lepoutre_JMPA}. We are here concerned with the following discretized version:
\begin{lem}\label{lem:duadis}
Fix $\rho>0$ and $\tau>0$ such that $\rho \tau < 1$. Let $(\mu^k)_{0\leq k\leq N-1}$ be a family of nonnegative functions 
which are integrable on $\Omega$. Consider a family $(u^k)_{0\leq k \leq  N-1}$ of nonnegative bounded functions, such that for all $k$, $\mu^k u^k \in\H^2(\Omega)$,  and which satisfies in the strong sense
\begin{align*}
\frac{u^{k}-u^{k-1}}{\tau} - \Delta(\mu^{k}u^{k}) &\leq \rho u^{k}, \text{ on }\Omega,\\
\partial_n (\mu^k u^{k}) &= 0,\text{ on }\partial\Omega.
\end{align*}
Then, this family satisfies the bound
\begin{align*}
\sum_{k=1}^{N-1}\tau\int_\Omega \mu^{k}|u^{k}|^2 \leq (1-\rho\tau)^{-2N}\left( \Cc_\Omega^2\, \|u^0\|_{\H_m^{-1}}^2+\langle u^0\rangle^2\sum_{k=1}^{N-1}\tau\int_\Omega \mu^{k}\right).
\end{align*}
\end{lem}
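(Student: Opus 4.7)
The plan is a discrete analog of the continuous duality lemma quoted just above, reduced in two steps to a clean reaction-free telescoping computation. The first reduction is a discrete comparison principle (inequality $\to$ equality); the second is an algebraic rescaling (equality with reaction $\to$ reaction-free equation).

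\emph{Step 1 (comparison).} Define $(\bar u^k)$ by $\bar u^0 = u^0$ and, for $k \geq 1$, $(1-\rho\tau)\bar u^k - \tau\Delta(\mu^k \bar u^k) = \bar u^{k-1}$ with $\pa_n(\mu^k \bar u^k) = 0$, i.e., the equality counterpart of the scheme. Existence of a nonnegative $\bar u^k$ and the pointwise comparison $0 \leq u^k \leq \bar u^k$ both follow from a discrete maximum principle: after the substitution $h := \mu^k g$, the operator $g \mapsto (1-\rho\tau)g - \tau\Delta(\mu^k g)$ becomes the coercive Neumann elliptic problem $h \mapsto -\tau\Delta h + \tfrac{1-\rho\tau}{\mu^k}h$, whose inverse preserves nonnegativity. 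Inducting on $w^k := \bar u^k - u^k$ (which satisfies $(1-\rho\tau)w^k - \tau\Delta(\mu^k w^k) \geq w^{k-1}$, $w^0 = 0$) gives $w^k \geq 0$, so $\int \mu^k (u^k)^2 \leq \int \mu^k (\bar u^k)^2$ and the desired bound reduces to the analogous bound for $(\bar u^k)$.

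\emph{Step 2 (rescaling and reaction-free energy estimate).} Setting $\hat u^k := (1-\rho\tau)^k \bar u^k$ and $\tilde\tau := \tau/(1-\rho\tau)$, a direct computation from the equality scheme yields $\hat u^k - \tilde\tau \Delta(\mu^k \hat u^k) = \hat u^{k-1}$ with $\hat u^0 = u^0$ (reaction-free scheme with modified time step). Integrating over $\Omega$ gives $\langle \hat u^k\rangle = \langle u^0\rangle$ for every $k$. Introducing $\hat\phi^k \in \H^1_m(\Omega)$ with $-\Delta \hat\phi^k = \hat u^k - \langle u^0\rangle$ and $\pa_n \hat\phi^k = 0$, testing the $\hat u^k$-equation against $\hat\phi^k$, using the identity $\int \nabla \hat\phi^k \cdot \nabla(\hat\phi^k - \hat\phi^{k-1}) = \tfrac{1}{2}(\|\nabla \hat\phi^k\|^2 - \|\nabla \hat\phi^{k-1}\|^2) + \tfrac{1}{2}\|\nabla(\hat\phi^k - \hat\phi^{k-1})\|^2$, discarding the last nonnegative term, and applying Young's inequality $\tilde\tau \langle u^0\rangle \int \mu^k \hat u^k \leq \tfrac{\tilde\tau}{2}\int \mu^k (\hat u^k)^2 + \tfrac{\tilde\tau \langle u^0\rangle^2}{2}\int \mu^k$, I obtain the telescoping bound
\[
\|\nabla \hat\phi^k\|^2 - \|\nabla \hat\phi^{k-1}\|^2 + \tilde\tau \int \mu^k (\hat u^k)^2 \leq \tilde\tau \langle u^0\rangle^2 \int \mu^k.
\]
Summing from $k=1$ to $N-1$, discarding $\|\nabla \hat\phi^{N-1}\|^2 \geq 0$, and using the Poincar\'e--Wirtinger bound $\|\nabla \hat\phi^0\|^2 \leq \Cc_\Omega^2 \|u^0\|_{\H^{-1}_m}^2$ yields
\[
\sum_{k=1}^{N-1} \tilde\tau \int \mu^k (\hat u^k)^2 \leq \Cc_\Omega^2 \|u^0\|_{\H^{-1}_m}^2 + \langle u^0\rangle^2 \sum_{k=1}^{N-1} \tilde\tau \int \mu^k.
\]

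\emph{Step 3 (undoing the rescaling, conclusion).} Substituting $\tilde\tau \int \mu^k (\hat u^k)^2 = \tau (1-\rho\tau)^{2k-1}\int \mu^k (\bar u^k)^2 \geq \tau (1-\rho\tau)^{2N-3}\int \mu^k (\bar u^k)^2$ (for $k \leq N-1$) and $\sum \tilde\tau \int \mu^k = (1-\rho\tau)^{-1}\sum \tau \int \mu^k$ in the estimate of Step 2, then rearranging, gives
\[
\sum_{k=1}^{N-1} \tau \int \mu^k (\bar u^k)^2 \leq (1-\rho\tau)^{-(2N-3)} \Cc_\Omega^2 \|u^0\|_{\H^{-1}_m}^2 + (1-\rho\tau)^{-(2N-2)} \langle u^0\rangle^2 \sum_{k=1}^{N-1} \tau \int \mu^k.
\]
Both prefactors are bounded by $(1-\rho\tau)^{-2N}$; combining with the pointwise comparison $\int \mu^k (u^k)^2 \leq \int \mu^k (\bar u^k)^2$ from Step~1 gives the claim. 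The main obstacle is Step~1 (the discrete maximum principle with variable coefficient $\mu^k$); Steps~2--3 are then routine, and the hypothesis $\rho\tau < 1$ is used exactly to make $\tilde\tau$ positive, so no hidden tighter restriction such as $\rho\tau<1/2$ is needed.
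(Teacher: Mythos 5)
Your Steps 2--3 are a correct discrete duality computation, close in spirit to the paper's own: the paper telescopes on the accumulated quantity $S^n:=\tau\sum_{k\le n}\mu^k v^k$ while you telescope on the inverse Laplacian $\hat\phi^k$ of $\hat u^k$, and both yield the stated bound (your bookkeeping of the powers of $(1-\rho\tau)$ in Step 3 is also fine). The genuine gap is Step 1, and it is not a removable technicality. The lemma is a pure \emph{a priori} estimate for a \emph{given} family satisfying an inequality, whereas your argument requires first \emph{constructing} solutions $\bar u^k$ of the equality scheme with $\mu^k\bar u^k\in\H^2(\Omega)$, for coefficients $\mu^k$ that are only assumed nonnegative and integrable. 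The substitution $h=\mu^k g$ turns the problem into $-\tau\Delta h+\frac{1-\rho\tau}{\mu^k}h=\bar u^{k-1}$, whose zeroth-order coefficient is unbounded and is infinite wherever $\mu^k$ vanishes; neither existence, nor the $\H^2$ regularity of $h$, nor the boundedness of $\bar u^k=h/\mu^k$ follows from standard elliptic theory in this generality. The comparison step inherits the same defect: testing $(1-\rho\tau)w^k-\tau\Delta(\mu^kw^k)\ge w^{k-1}$ against $-(\mu^kw^k)^-$ only yields $\mu^kw^k\ge0$, hence $w^k\ge0$ only on the set $\{\mu^k>0\}$, which is not enough to propagate the induction across the region where $\mu^k$ vanishes.

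The detour is also unnecessary. The paper works directly with the inequality by choosing a multiplier whose sign is known for free: after the same rescaling $v^k=(1-\rho\tau)^ku^k$, it sums the inequalities to obtain $v^n-\frac{1}{1-\rho\tau}\Delta S^n\le v^0$ and multiplies by $S^n-S^{n-1}=\tau\mu^nv^n\ge0$, which preserves the inequality and produces exactly the telescoping structure of your Step 2 without ever solving an auxiliary elliptic problem or invoking a maximum principle. If you replace your Step 1 by this choice of nonnegative test function, the rest of your argument goes through essentially unchanged.
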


\begin{proof}
Since $\rho\tau<1$,  the sequence of nonnegative functions $v^k:=(1-\rho\tau)^k u^k$ satisfies
\begin{align*}
v^{k}-\frac{\tau}{1-\rho \tau}\Delta \mu^{k}v^{k}&\leq v^{k-1} ,\text{ a.e. in }\Omega,\\
\partial_n v^{k}&=0,\text{ a.e. on }\partial\Omega.
\end{align*}
Summing from $k=1$ to $k=n$ we get for all $1\leq n \leq N-1$,
\begin{align*}
v^{n}-\frac{1}{1-\rho \tau}\Delta S^{n} &\leq v^0 \text{ a.e. in }\Omega,
\end{align*}
where 
\begin{align*}
S^{n} := \tau \sum_{k=1}^n \mu^{k}v^{k}\,\in\,\H^2(\Omega).
\end{align*}
We multiply this last inequality by $\tau \mu^{n}v^{n}=S^{n}-S^{n-1}$ (with the convention $S^0=0$) and integrate over $\Omega$. We get 
$$
\tau \int_\Omega \mu^{n}|v^{n}|^2 +\frac{1}{1-\rho \tau}\int_\Omega (\nabla S^{n}-\nabla S^{n-1})\cdot \nabla S^{n}\leq \int_\Omega v^0 (S^{n}-S^{n-1}),
$$
and therefore,
$$
\tau \int_\Omega \mu^{n}|v^{n}|^2 +\frac{1}{2(1-\rho \tau)}\int_\Omega |\nabla S^{n}|^2-|\nabla S^{n-1}|^2\leq \int_\Omega v^0 (S^{n}-S^{n-1}).
$$
We sum up over $n$ again to obtain 
$$
\sum_{n=1}^{N-1}\tau\int_\Omega \mu^{n}|v^{n}|^2 +\frac{1}{2(1-\rho \tau)}\int_\Omega |\nabla  S^{N-1}|^2\leq \int_\Omega v^0  S^{N-1}.
$$
Using Poincar\'e-Wirtinger's and Young's inequalities, the right-hand side can be dominated by
\begin{align*}
\int_\Omega v^0  S^{N-1} &= \int_\Omega v^0 (S^{N-1}-\langle S^{N-1}\rangle) + \langle v^0\rangle \int_\Omega S^{N-1}\\
&\le \Cc_\Omega \|v^0\|_{\H^{-1}_m(\Omega)}\|\nabla S^{N-1}\|_2+\langle v^0\rangle \int_\Omega S^{N-1}\\
&\leq \frac{\Cc_\Omega^2\|v^0\|_{\H^{-1}_m}^2}{2} + \frac{\|\nabla S^{N-1}\|_2^2}{2}+  \langle v^0\rangle \int_\Omega S^{N-1},
\end{align*}
from which we get, since $\rho\tau <1$,
\begin{align*}
\sum_{n=1}^{N-1}\tau\int_\Omega \mu^{n}|v^{n}|^2 \leq \frac{\Cc_\Omega^2\|v^0\|_{\H^{-1}_m}^2}{2} + \langle v^0\rangle \int_\Omega S^{N-1}.
\end{align*}
Using Cauchy-Schwarz inequality one easily gets from the definition of $S^{N-1}$ 
\begin{align*}
\langle v^0 \rangle \int_\Omega S^{N-1} \leq \sqrt{\sum_{n=1}^{N-1}\tau\int_\Omega \mu^{n}|v^{n}|^2}\sqrt{\langle v^0\rangle^2\sum_{n=1}^{N-1}\tau\int_\Omega \mu^{n}},
\end{align*}
so that using another time Young's inequality, one obtains 
\begin{align*}
\sum_{n=1}^{N-1}\tau\int_\Omega \mu^{n}|v^{n}|^2\leq  \Cc_\Omega^2\|v^0\|_{\H^{-1}_m}^2 + \langle v^0\rangle^2\sum_{n=1}^{N-1}\tau\int_\Omega \mu^{n},
\end{align*}
from which we may conclude using $v^n = (1-\rho\tau)^n u^n$. 
\end{proof}
We eventually apply the previous lemma to get the following estimate:

\begin{cor}[\textbf{H}]\label{cor:dua}
Suppose $\rho\tau <1/2$. Then there exists a constant $C(\Omega,U^0,A, \rho, N\tau)$ depending only on $\Omega$, $A$, $\rho$, $N\tau$ and $\|U^0\|_{\L^1\cap\H^{-1}_m(\Omega)}$ such that the family $(U^k)_k$ satisfies
\begin{align} \label{for38re}
\sum_{k=0}^{N-1} \tau \int_\Omega \left(\sum_{i=1}^I u_i^k\right)\left(\sum_{i=1}^I a_i(U^k) u_i^k\right) \leq  C(\Omega,U^0,A, \rho, N\tau).
\end{align}
\end{cor}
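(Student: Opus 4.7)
The plan is to reduce the vectorial scheme to a scalar inequality and apply the discrete duality Lemma~\ref{lem:duadis}. Set $u^k := |U^k|_1 = \sum_i u_i^k$ and $\mu^k := |A(U^k)|_1 / u^k$. Proposition~\ref{prop:estireg} guarantees $u^k>0$ on $\ov\Omega$, so $\mu^k$ is well defined; by \textbf{H2} one has $\mu^k \geq \alpha$, and $\mu^k u^k = |A(U^k)|_1 \in \H^2_\nu(\Omega)$. Summing the $I$ equations of \eqref{eq:scheme} and using $r_i(U^k) \leq \rho$ together with $u_i^k \geq 0$ gives
$$\frac{u^k-u^{k-1}}{\tau} - \Delta(\mu^k u^k) \leq \rho u^k \text{ on }\Omega, \qquad \pa_n(\mu^k u^k) = 0 \text{ on }\pa\Omega,$$
the exact setting of Lemma~\ref{lem:duadis}. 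Setting $Y := \sum_{k=1}^{N-1} \tau \int \mu^k(u^k)^2$ and $Z := \sum_{k=1}^{N-1} \tau \int \mu^k$, the lemma produces
$$Y \leq C_1 + C_2 \, Z, \quad C_1 := (1-\rho\tau)^{-2N} \Cc_\Omega^2\,\|u^0\|_{\H^{-1}_m}^2, \quad C_2 := (1-\rho\tau)^{-2N} \langle u^0\rangle^2.$$
Since $\rho\tau<1/2$, the factor $(1-\rho\tau)^{-2N}$ is dominated by $2^{4\rho N\tau}$ (by the same argument used in Proposition~\ref{prop:l1}), so $C_1,C_2$ are uniformly bounded in $\tau$ in terms of $\Omega$, $\rho$, $N\tau$ and $\|U^0\|_{\L^1\cap\H^{-1}_m}$ only. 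Note that $\mu^k(u^k)^2 = |U^k|_1 |A(U^k)|_1$ is exactly (up to the $k=0$ term) the integrand appearing in \eqref{for38re}, so $Y$ is the quantity we wish to control.

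The main obstacle is to bound $Z$ uniformly in $\tau$: nothing \emph{a priori} controls the pointwise ratio $|A(U^k)|_1/u^k$. We overcome this by a threshold decomposition. For $c>0$ to be chosen below, split
$$\int_\Omega \mu^k = \int_{\{u^k<c\}} \mu^k + \int_{\{u^k\geq c\}} \mu^k.$$
On $\{u^k<c\}$, all components of $U^k$ lie in $[0,c]$, so by continuity of the $a_i$ (\textbf{H1}), $\mu^k \leq \max_i a_i(U^k) \leq M_c := \sup_{V\in\R_+^I,\,|V|_1\leq c}\max_i a_i(V) < \infty$. On $\{u^k\geq c\}$, trivially $\mu^k \leq |A(U^k)|_1/c$. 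Summing in $k$ and writing $X := \sum_{k=1}^{N-1} \tau \int |A(U^k)|_1 = \sum \tau \int \mu^k u^k$, we obtain $Z \leq M_c |\Omega| N\tau + X/c$. Applying Cauchy-Schwarz first to $\int \mu^k u^k \leq (\int \mu^k)^{1/2}(\int \mu^k(u^k)^2)^{1/2}$ and then to the $k$-sum yields $X \leq \sqrt{YZ}$, so
$$Z \leq M_c |\Omega| N\tau + \frac{\sqrt{YZ}}{c}.$$

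To close the system, inject $Y \leq C_1 + C_2 Z$ and use $\sqrt{YZ} \leq \sqrt{C_1 Z} + \sqrt{C_2}\,Z$, giving
$$Z \leq M_c |\Omega| N\tau + \frac{\sqrt{C_1 Z}}{c} + \frac{\sqrt{C_2}}{c} Z.$$
Choosing $c := 2\sqrt{C_2}$ (a constant depending only on the allowed data) reduces the last term to $Z/2$, which we move to the left-hand side; Young's inequality on the middle term then absorbs a further $Z/4$, yielding a uniform bound $Z \leq \ov Z$ depending only on $\Omega$, $A$ (through $M_c$), $\rho$, $N\tau$ and $\|U^0\|_{\L^1\cap\H^{-1}_m}$. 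Consequently $Y \leq C_1 + C_2\,\ov Z$ is uniformly bounded, which is \eqref{for38re} restricted to $k\geq 1$. The isolated $k=0$ contribution $\tau \int |U^0|_1 |A(U^0)|_1$ is controlled independently of $\tau$ using $U^0 \in \L^\infty(\Omega)^I$, continuity of $A$, and $\tau \leq 1/(2\rho)$, and adding it to $\ov Y$ produces the complete estimate \eqref{for38re}.
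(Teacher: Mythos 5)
Your proof is correct and follows essentially the same route as the paper: reduce the system to the scalar inequality for $u^k=|U^k|_1$ with effective diffusion rate $\mu^k=|A(U^k)|_1/u^k$, apply the discrete duality Lemma \ref{lem:duadis}, and then control $Z=\sum_k\tau\int_\Omega\mu^k$ by a threshold decomposition in $u^k$ followed by an absorption argument. The only (harmless) deviation is in the absorption step: the paper bounds $\mu^k\le \mu^k|u^k|^2/L^2$ on $\{u^k>L\}$ to get $Z\le C(L)|\Omega|N\tau+Y/L^2$ and absorbs directly into $Y\le C(1+Z)$, whereas you keep only one power of $u^k$ and close the loop through the intermediate quantity $X=\sum_k\tau\int_\Omega\mu^k u^k$ via Cauchy--Schwarz and Young; both yield the same constant dependence.
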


\begin{proof}
In order to apply Lemma \ref{lem:duadis} to inequality \eqref{ineq:schemineq},  we introduce two families of real-valued functions  : 
\begin{align*}
u^k &:=  \sum_{i=1}^I u_i^k, \\
\mu^k &:= \frac{\sum_{i=1}^I a_i(U^k) u_i^k}{\sum_{i=1}^I u_i^k}.
\end{align*}
Notice that $\mu^k$ is a well-defined nonnegative function on $\Omega$ and $\mu^k u^k \in\H^2(\Omega)$ thanks to Proposition \ref{prop:estireg}.
Summing all coordinates of the vectorial inequality \eqref{ineq:schemineq}, we get (almost everywhere on $\Omega$)
\begin{align*}
\frac{u^k-u^{k-1}}{\tau}-\Delta(\mu^k u^k) \leq \rho u^k.
\end{align*}
 Lemma \ref{lem:duadis} yields then the following inequality: 
\begin{align*}
\sum_{k=0}^{N-1} \tau \int_\Omega \mu^k |u^k|^2 \leq 2^{4\rho N\tau} C(\Omega,u^0)\left(1+\sum_{k=0}^{N-1} \tau\int_\Omega \mu^k\right).
\end{align*}
From the definition of $\mu^k$ and $u^k$ and the continuity of the mapping $A$, it is clear that (for any $L>0$) $|u^k|\leq L$ implies $|\mu^k|\leq C(L)$, where $C(L)>0$ is some constant depending on $L$ and $A$, so that the integrals in the r.h.s. may be handled in the following way: 
\begin{align*}
\int_\Omega \mu^k &\leq C(L)|\Omega| + \int_{|u^k|>L} \mu^k \\
&\leq C(L)|\Omega| + \frac{1}{L^2}\int_{\Omega} \mu^k |u^k|^2.
\end{align*}
Then, if $L$ is large enough to satisfy  $2^{4\rho N\tau}C(\Omega,u^0)<L^2/2$, we get 
\begin{align*}
\sum_{k=0}^{N-1} \tau \int_\Omega \mu^k |u^k|^2 \leq 2 \times 2^{4\rho N\tau} C(\Omega,u^0) \left[1+ N\tau C(L)|\Omega|\right],
\end{align*}
from which we easily conclude.
\end{proof}

\section{The entropy estimate for two species}\label{sec:twospec}

 This section is devoted to the elucidation of the hidden entropy
 structure for equations of the form (\ref{eq:syst_multi_vec}). The results of this
 section are not really needed in the proof of Theorem \ref{th:globalex}, since the computations which are presented here as
 {\it{a priori}} estimates will be performed a second time at the
 level of the approximated system. 

Let us first recall that in \cite{deslepmou} was investigated a system that
took the following general form: 
\begin{align}\label{eq:system:total1}
 \partial_t u_1 - \Delta\big[(d_{11}(u_1)+ a_{12}(u_2))\, u_1\big]
 &= u_1\big(\rho_1-s_{11}(u_1)-s_{12}(u_2)\big),
 \\
\label{eq:system:total2} \partial_t u_2 - \Delta\big[(d_{22}(u_2)+ a_{21}(u_1))\,u_2\big]
 &= u_2\big(\rho_2-s_{22}(u_2)-s_{21}(u_1)\big),
\end{align}
under various conditions on the functions $d_{ij}$, $a_{ij}$, $s_{ij}$, for $i,j\in\{1,2\}$,  One of these conditions was that both $a_{12}$ and $a_{21}$ had to be increasing and concave. 
We intend here to relax this assumption in order to include several convex/concave cases. Due to the large number of possibilities, the treatment of this system in its full generality (that is, all possible functions $d_{ij}$, $a_{ij}$ and $s_{ij}$) leads to lengthy if not tedious statements and computations. To ease a little bit the understanding of this entropy structure, we will present first the specific case  of power rates coefficients 
(subsection \ref{subsec:specent}) and then treat a more general framework (subsection \ref{subsec:genent}).

\subsection{A simple specific example}\label{subsec:specent}

Since self-diffusion ($d_{ii}$ functions) usually tends to improve the 
estimates, we consider the case of constant self-diffusion rates. Similarly, since reaction terms have no real influence on the entropy structure,
 we will not consider them here.
 Consider hence the following system: 
\begin{align}
\label{eq:u}\partial_t u_1 -\Delta \Big[ u_1(d_1 + u_2^{\gamma_2}) \Big] &=   0, \\
\label{eq:v}\partial_t u_2 -\Delta \Big[ u_2(d_2 + u_1^{\gamma_1}) \Big] &= 0,
\end{align}
where $d_1,d_2,\gamma_1,\gamma_2>0$.  The entropy exhibited in \cite{deslepmou}, corresponds to the limitation : $\gamma_1 \in ]0,1[$, $\gamma_2 \in ]0,1[$ (we speak only about the control of the entropy here). Let us explain how to control an entropy in cases when $\gamma_2>1$ under the condition $\gamma_1<1/\gamma_2$. 
\medskip

We present  the following result:

\begin{Propo}
Consider $d_1>0$, $d_2>0$ and $\gamma_2>1$, $0<\gamma_1<1/\gamma_2$.  Let $(u_1, u_2)$ be a classical (that is, belonging to $\mathscr{C}^2([0,T] \times \ov{\Omega})$) positive (that is, both $u_1$ and $u_2$ are positive on $Q_T$) solution to \eqref{eq:u} -- \eqref{eq:v}  with homogeneous Neumann boundary conditions on $\partial\Omega$. Then the following {\emph{a priori}} estimates hold for $i=1,2$ and all $t\in[0,T]$
\begin{align}
\label{eslil1}\int_\Omega u_i(t)  =\int_\Omega u_i(0),
\end{align}
and 
\begin{align}
\label{esent} \sup_{t\in [0,T]}\int_\Omega u_2(t)^{\gamma_2} + C \int_0^T \int_{\Omega} \Big\{|\nabla u_1^{\gamma_1/2}|^2  + |\nabla u_2^{\gamma_2/2}|^2 + \left|\nabla (u_1^{\gamma_1/2} u_2^{\gamma_2/2})\right|^2 \Big\} &\leq C_{T,u_1(0),u_2(0)}.
\end{align}
\end{Propo}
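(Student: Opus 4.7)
The $L^1$ estimate \eqref{eslil1} is immediate: integrating \eqref{eq:u}, \eqref{eq:v} over $\Omega$ and using that the homogeneous Neumann condition on $u_i$ transfers to the fluxes $u_i(d_i + u_j^{\gamma_j})$, the Laplacian terms vanish, and there is no reaction.

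For \eqref{esent} the plan is to exhibit a Lyapunov functional
$$\mathcal{H}(t) := \int_\Omega \big[c_1\,\Psi_{\gamma_1}(u_1) + c_2\,\Psi_{\gamma_2}(u_2)\big]\,\dd x, \qquad \Psi_p(u):=\frac{u^p - p u + p - 1}{p(p-1)},$$
with positive weights $c_1, c_2$ to be tuned. Recall that $\Psi_p \ge 0$ and $\Psi_p''(u) = u^{p-2}$ for every $p>0$, $p\neq 1$. Differentiating in time, plugging in \eqref{eq:u}-\eqref{eq:v}, and integrating by parts twice (the Neumann condition on $u_i$ transferring to the fluxes), I would obtain
\begin{align*}
-\frac{\dd\mathcal{H}}{\dd t} &= \int_\Omega \Big[\tfrac{4 c_1 d_1}{\gamma_1^2}\,|\nabla u_1^{\gamma_1/2}|^2 + \tfrac{4 c_2 d_2}{\gamma_2^2}\,|\nabla u_2^{\gamma_2/2}|^2\Big]\,\dd x \\
&\quad + \int_\Omega \Big[\tfrac{4 c_1}{\gamma_1^2}\,|X|^2 + \tfrac{4 c_2}{\gamma_2^2}\,|Y|^2 + \tfrac{4(c_1\gamma_2 + c_2\gamma_1)}{\gamma_1\gamma_2}\,X\cdot Y\Big]\,\dd x,
\end{align*}
after recasting the cross-diffusion contribution in terms of the auxiliary vectors $X:=u_2^{\gamma_2/2}\nabla u_1^{\gamma_1/2}$ and $Y:=u_1^{\gamma_1/2}\nabla u_2^{\gamma_2/2}$, chosen so that $X+Y = \nabla\!\big(u_1^{\gamma_1/2}\,u_2^{\gamma_2/2}\big)$ --- exactly the mixed gradient appearing in \eqref{esent}.

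The crux is that the cross-diffusion quadratic form in $(X,Y)$ is strictly positive definite iff $4 c_1 c_2 > (c_1\gamma_2 + c_2\gamma_1)^2$, a second-degree inequality in the ratio $c_1/c_2$ whose discriminant equals $16(1-\gamma_1\gamma_2)$ --- hence solvable with positive weights \emph{exactly} under the standing assumption $\gamma_1\gamma_2 < 1$. A concrete admissible pair is $c_1=\gamma_1$, $c_2=\gamma_2$, for which $4 c_1 c_2 - (c_1\gamma_2 + c_2\gamma_1)^2 = 4\gamma_1\gamma_2(1-\gamma_1\gamma_2) > 0$. A small perturbation then provides $\kappa > 0$ such that the cross-diffusion integrand dominates $\kappa |X+Y|^2$, and combining with the self-diffusion terms, one deduces that $-\dd\mathcal{H}/\dd t$ is bounded below by $\kappa'\int_\Omega [|\nabla u_1^{\gamma_1/2}|^2 + |\nabla u_2^{\gamma_2/2}|^2 + |\nabla (u_1^{\gamma_1/2} u_2^{\gamma_2/2})|^2]\,\dd x$ for some $\kappa'>0$. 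Integrating on $[0,T]$, the nonnegativity $\Psi_p \ge 0$ gives $\mathcal{H}\ge 0$, so that $\int_0^T (-\dd\mathcal{H}/\dd t) \le \mathcal{H}(0) < \infty$ yields the three gradient bounds, while $\sup_t \int_\Omega u_2^{\gamma_2}$ is controlled by $\mathcal{H}(t)\le \mathcal{H}(0)$ combined with the elementary lower bound $\Psi_{\gamma_2}(u) \ge c_{\gamma_2}\, u^{\gamma_2} - C_{\gamma_2}$ (valid since $\gamma_2>1$) and the conservation of mass \eqref{eslil1}. The delicate step is the algebraic matching of the weights: the fact that $4c_1 c_2 > (c_1\gamma_2+c_2\gamma_1)^2$ is solvable in positive $c_1, c_2$ precisely when $\gamma_1\gamma_2<1$ is what unlocks the regime $\gamma_2>1$ inaccessible to the concave-concave entropy of \cite{deslepmou}.
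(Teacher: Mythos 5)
Your proof is correct and follows essentially the same route as the paper: with the concrete choice $c_1=\gamma_1$, $c_2=\gamma_2$ your functional $\mathcal{H}$ coincides with the paper's entropy $\E$, and your positive-definiteness condition $4c_1c_2>(c_1\gamma_2+c_2\gamma_1)^2$ reduces to the paper's criterion that the matrix with entries $\gamma_1,\gamma_1\gamma_2,\gamma_1\gamma_2,\gamma_2$ be positive definite, i.e.\ $\gamma_1\gamma_2<1$. The only (cosmetic) difference is that you absorb the weight $u_1^{\gamma_1}u_2^{\gamma_2}$ into the auxiliary vectors $X,Y$ and keep the weights $c_1,c_2$ general before specializing, whereas the paper works directly with the form in $\nabla u_i/u_i$.
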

\begin{proof}
Integrating on $[0,t[ \times\Omega$  eq. \eqref{eq:u} -- \eqref{eq:v}, we obtain estimate \eqref{eslil1}.

\vspace{2mm}

Define
\begin{align*}
h_1(t):= t-\frac{t^{\gamma_1}}{\gamma_1}, \quad h_2(t):= \frac{t^{\gamma_2}}{\gamma_2}-t, 
\end{align*}
so that both functions reach their minimum for $t=1$. Define the corresponding entropy
\begin{align*}
\E(u_1,u_2) := \frac{\gamma_1}{1-\gamma_1}\int_{\Omega} \left[ h_1(u_1)-h_1(1)\right]\, + \frac{\gamma_2}{\gamma_2-1}\int_{\Omega} \left[ h_2(u_2)-h_2(1) \right] \,.
\end{align*}
Differentiating this functional and performing the adequate integrations by parts, we get the following identity:
\begin{align*}
\frac{\dd}{\dd t} \E(u_1,u_2) &+ \gamma_1 d_1 \int_{\Omega} u_1^{\gamma_1-2} |\nabla u_1|^2  + \gamma_2 d_2 \int_{\Omega} u_2^{\gamma_2-2} |\nabla u_2|^2  \\
&+ \int_{\Omega} u_1^{\gamma_1} \,u_2^{\gamma_2} \left[\gamma_1 \left|\frac{\nabla u_1}{u_1}\right|^2 + \gamma_2 \left|\frac{\nabla u_2}{u_2}\right|^2 + 2 \gamma_1\gamma_2 \frac{\nabla u_1}{u_1}\cdot \frac{\nabla u_2}{u_2}\right] =0.
\end{align*}
Since $0<\gamma_1\,\gamma_2<1$, the quadratic form  $Q$ associated to the matrix $\begin{pmatrix}
\gamma_2 & \gamma_1 \gamma_2\\
\gamma_1\gamma_2 & \gamma_1
\end{pmatrix}$
is positive definite, whence the existence of a constant $C>0$ such as  \begin{align*}
Q(x,y)\geq C \left(x^2 + y^2\right).
\end{align*}
Using the previous inequality in the entropy identity, we get
\begin{align*}
\frac{\dd}{\dd t} \E(u_1(t),u_2(t)) &+ \gamma_1 d_1 \int_{\Omega} u_1^{\gamma_1-2} |\nabla u_1|^2  \\
&+ \gamma_2 d_2 \int_{\Omega} u_2^{\gamma_2-2} |\nabla u_2|^2  
+ C\int_{\Omega} u_1^{\gamma_1} \,u_2^{\gamma_2} \left[\,\left|\frac{\nabla u_1}{u_1}\right|^2 + \left|\frac{\nabla u_2}{u_2}\right|^2\,\right] \leq 0,
\end{align*}
from which we obtain, changing the constant $C$
\begin{align}
\label{ineq:ent}\frac{\dd}{\dd t} \E(u_1(t),u_2(t)) + C \int_{\Omega} |\nabla u_1^{\gamma_1/2}|^2  + C \int_{\Omega} |\nabla u_2^{\gamma_2/2}|^2  
+ C\int_{\Omega} \left|\nabla \sqrt{u_1^{\gamma_1} \,u_2^{\gamma_2}}\right|^2 \leq 0.
\end{align}
Since $h_1$ reaches its minimum at point $1$, we have $h_1(u_1)-h(1)\geq 0$, and integrating
 the previous inequality on $[0,t]$ leads to 
\begin{align*}
\frac{1}{\gamma_2-1}\int_\Omega u_2(t)^{\gamma_2} &+ C \int_0^t \int_{\Omega} |\nabla u_1^{\gamma_1/2}|^2  + C \int_0^t \int_{\Omega} |\nabla u_2^{\gamma_2/2}|^2 \\
 &+ C\int_0^t\int_{\Omega} \left|\nabla \sqrt{u_1^{\gamma_1} \,u_2^{\gamma_2}}\right|^2 \leq \frac{\gamma_2}{\gamma_2-1} \int_\Omega u_2(t) + \E(u_1,u_2)(0),
\end{align*}
and the conclusion follows using \eqref{eslil1}.
\end{proof}

\subsection{The general entropy structure}\label{subsec:genent}

Let us consider the general form \eqref{eq:syst_multi_vec} in the case of two species. As before, we assume that $R=0$ 
in order to simplify the presentation.

\vspace{2mm}

 We will now perform rather formal (unjustified) computations, in order to bring out the entropy structure. In Section \ref{sec:globweak}, all these computations will be rigorously 
justified (under the corresponding set of assumptions), at the (semi-)discrete level, in order to prove the existence of global weak solutions to eq. (\ref{eq:u:reac}), (\ref{eq:v:reac})  [Theorem \ref{th:globalex}].

\vspace{2mm} 

We consider a function $\Phi:\R^2\rightarrow\R$.
 If $U$ is solution of \eqref{eq:syst_multi_vec} with $R=0$, introducing $W:=\nabla \Phi(U)$, a straightforward computation (yet completely formal) leads to 
\begin{align*}
\frac{\dd}{\dd t}\int_\Omega \Phi(U)  - \int_\Omega \langle \nabla \Phi(U) , \Delta[A(U)]\rangle=0,
\end{align*}
where $\langle \cdot,\cdot \rangle$ is the inner-product on $\R^2$, whence after integration by parts, using the repeated index convention,
\begin{align*}
\int_\Omega \langle \nabla \Phi(U) , \Delta[A(U)]\rangle&= \int_\Omega (\partial_i \Phi)(U) \partial_{jj}[A_i(U)] \\
&=-\int_\Omega \partial_j[(\partial_i \Phi)(U)] \partial_{j}[A_i(U)]\\
&=-\int_\Omega \partial_k\partial_i \Phi(U) \partial_j u_k \partial_{\ell} A_i(U)\partial_j u_\ell .
\end{align*}
Summing first in the $i$ index, we recognize a matrix product and we can write 
\begin{align*}
\int_\Omega \langle \nabla \Phi(U) , \Delta[A(U)]\rangle=- \sum_{j=1}^2\int_\Omega \langle \partial_j U, \D^2\Phi(U)\D(A)(U)\partial_j U\rangle,
\end{align*}
where $\D(A)$ is the Jacobian matrix of $A$,
 and $\D^2(\Phi)$ is the Hessian matrix of $\Phi$. We eventually get 
\begin{align}
\label{eq:formalent}\frac{\dd}{\dd t}\int_\Omega \Phi(U)  + \sum_{j=1}^2\int_\Omega \langle \partial_j U, \D^2\Phi(U)\D(A)(U)\partial_j U\rangle=0.
\end{align}
This identity implies that $\Phi(U)$ becomes a Lyapunov functional of the system, as soon as $\D^2(\Phi)\D(A)$ is positive-semidefinite (in the sense of symmetric matrices, that is, its symmetric part is positive-semidefinite). The previous computation motivates the following definition
\begin{definition}[Entropy]\label{defi:entro}
Consider $D\subset\R^2$ an open set. A real valued function $\Phi\in\mathscr{C}^2(D)$ is called an \textsf{entropy} on $D$ for the system \eqref{eq:syst_multi_vec} if it is nonnegative, and satisfies, for all $X\in D$, $\D^2(\Phi)\D(A)(X)$ is positive-semidefinite (in the sense that its symmetric part is positive-semidefinite).
\end{definition}
\begin{remark}\label{rem:gradients}
 Because of the nonnegativeness of $\Phi$, one easily gets the estimate $\Phi(U)\in\L^\infty_t(\L^1_x)$ (depending on the initial data), but in fact \eqref{eq:formalent} often implies much more than this simple estimate for the solutions of the system. Indeed, in many situations, the second term in the l.h.s. of \eqref{eq:formalent} yields
estimates on the gradients of the unknowns (as in Subsection \ref{subsec:specent}). This will be of crucial importance in 
Section \ref{sec:globweak}, since no other estimate on the gradients is known for the system 
considered in this application.
\end{remark}
\begin{remark}
The set $D$ depends on the type of system that we consider. It is the set of expected values for the vector solution $U$. In our framework, we expect positive solutions, that is $D=\R_+^* \times\R_+^*$, but it is sometimes useful to consider bounded sets for $D$, this is for instance the case in \cite{jungbound}.
\end{remark}

\vspace{2mm}

 We end this section with a simple generic example, for two species, for which we do have an entropy on $\R_+^*\times\R_+^*$, and which will cover the specific example treated in Subsection \ref{subsec:specent}. As explained before, self-diffusion generally eases the study of the system. To simplify the presentation, we hence assume here again that the diffusion terms are purely of cross-diffusion type, that is $A(U)=(u_i a_i(u_j))_{i=1,2}$ with $j \neq i$. The idea is then simple, if $\det \D(A)$ and $\textnormal{Tr}\,\D(A)$ are both nonnegative, then $\D(A)$ has two nonnegative eigenvalues and is hence not far from being positive-semidefinite, in the sense that it would indeed be positive-semidefinite if it were symmetric 
; this last property may be satisfied after multiplication by a diagonal matrix, keeping the positiveness of the trace and the determinant: this will be done thanks to $\D^2(\Phi)$. More precisely, we present the following elementary proposition: 
\begin{Propo}\label{propo:ent2spec}
Consider $a_1,a_2:\R_+^*\rightarrow\R_+$ two $\mathscr{C}^1$ functions and for $X=(x_1,x_2)\in\R_+^*\times\R_+^*$ define 
\begin{align*}
A(X) &:= (x_i a_i(x_j))_{i} \quad(i\neq j),\\
\Phi(X)&: = \phi_1(x_1) + \phi_2(x_2),
\end{align*}
where $\phi_i$ is a nonnegative second primitive of $z\mapsto a_j'(z)/z$ ($i\neq j$). If $a_1,a_2$ are increasing and $\det \D(A)\geq 0$, then $\Phi$ is an entropy on $\R_+^*\times\R_+^*$ for the system \eqref{eq:syst_multi_vec}, in the sense of Definition \ref{defi:entro}. 
\end{Propo}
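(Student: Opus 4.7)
The plan is a direct computation to verify both requirements in Definition \ref{defi:entro}: nonnegativity of $\Phi$, and positive-semidefiniteness of the symmetric part of $\D^2(\Phi)\,\D(A)$. Nonnegativity of $\Phi = \phi_1 + \phi_2$ is given by hypothesis (the $\phi_i$ are chosen as nonnegative primitives), so the content of the proof is really the matrix computation.

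First I would write out the Jacobian
\[
\D(A)(X) = \begin{pmatrix} a_1(x_2) & x_1\, a_1'(x_2) \\ x_2\, a_2'(x_1) & a_2(x_1) \end{pmatrix},
\]
and the Hessian, which is diagonal because $\Phi(X)=\phi_1(x_1)+\phi_2(x_2)$:
\[
\D^2(\Phi)(X) = \begin{pmatrix} \phi_1''(x_1) & 0 \\ 0 & \phi_2''(x_2) \end{pmatrix} = \begin{pmatrix} a_2'(x_1)/x_1 & 0 \\ 0 & a_1'(x_2)/x_2 \end{pmatrix},
\]
where I have used the defining relations $\phi_1''(z)=a_2'(z)/z$ and $\phi_2''(z)=a_1'(z)/z$.

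The key observation, which I would highlight next, is that the product is \emph{already} symmetric:
\[
\D^2(\Phi)\,\D(A) = \begin{pmatrix} \dfrac{a_2'(x_1)\,a_1(x_2)}{x_1} & a_2'(x_1)\,a_1'(x_2) \\[4pt] a_2'(x_1)\,a_1'(x_2) & \dfrac{a_1'(x_2)\,a_2(x_1)}{x_2} \end{pmatrix}.
\]
So there is no need to symmetrize, and I only have to verify that this symmetric matrix is positive-semidefinite. I would do this by the $2\times 2$ Sylvester-type criterion: nonnegativity of the diagonal entries, and nonnegativity of the determinant. Both diagonal entries are nonnegative because $a_1,a_2$ are increasing (so $a_i'\ge 0$), nonnegative-valued, and $x_1,x_2>0$. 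For the determinant, I would use multiplicativity:
\[
\det\bigl(\D^2(\Phi)\,\D(A)\bigr) = \det\D^2(\Phi)\cdot \det\D(A) = \frac{a_1'(x_2)\,a_2'(x_1)}{x_1 x_2}\,\det\D(A),
\]
which is nonnegative since the first factor is nonnegative by monotonicity of $a_1,a_2$, and $\det\D(A)\ge 0$ is precisely the last hypothesis of the proposition. Combining these gives the positive-semidefiniteness and concludes the proof.

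There is really no conceptual obstacle here; the whole point of choosing $\phi_i''(z)=a_j'(z)/z$ is to make the product $\D^2(\Phi)\,\D(A)$ symmetric, which turns the positivity question into an elementary one controlled by $\det\D(A)$ and the monotonicity of the $a_i$. The only minor subtlety worth mentioning is the existence of the nonnegative primitives $\phi_i$, which follows from the continuity of $z\mapsto a_j'(z)/z$ on $\R_+^*$ (adjusting integration constants if necessary to ensure nonnegativity on $\R_+^*$); this justifies that the definition of $\Phi$ makes sense.
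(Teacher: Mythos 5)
Your proof is correct and follows essentially the same route as the paper: compute $\D(A)$ and the diagonal Hessian $\D^2(\Phi)$, observe that the product is already symmetric, and conclude positive-semidefiniteness from the sign of the entries (the paper phrases this via the trace rather than the two diagonal entries, which is equivalent for a symmetric $2\times2$ matrix) together with the multiplicativity of the determinant.
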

\begin{proof}
We compute
\begin{align*}
\D(A)(X) = 
\begin{pmatrix}
a_1(x_2)  &  x_1  a_1'(x_2) \\
x_2a_2'(x_1) & a_2(x_1)
\end{pmatrix},\quad
\D^2(\Phi)(X) = \begin{pmatrix}
\frac{a_2'(x_1)}{x_1} & 0 \\
 0 & \frac{a_1'(x_2)}{x_2}
\end{pmatrix},
\end{align*}
so that 
\begin{align*}
M(X):=\D^2(\Phi)\D(A)(X) =\begin{pmatrix}
\star & a_2'(x_1)a_1'(x_2) \\
 a_1'(x_2)a_2'(x_1) & \star
\end{pmatrix}
\end{align*}
is obviously symmetric. If the functions $a_i$ are increasing, all the coefficients of $M(X)$ are nonnegative, so that $\textnormal{Tr}\,M(X)\geq 0$ ; we also see that \begin{align*}
\det M(X) = \det \D^2(\Phi)(X) \det \D(A)(X)\geq 0,
\end{align*} which allows to conclude.
\end{proof}

\section{Global weak solutions for two species}\label{sec:globweak}

We plan in this section to prove Theorem \ref{th:globalex}. The proof is described in subsections
\ref{sub51} to \ref{sub53}.

\subsection{Scheme} \label{sub51}

{\sl{Proof of Theorem \ref{th:globalex}}}: 
In this Section, we will just invoke the results stated in Theorem~\ref{th:existence_scheme_it} of Section \ref{sec:exsch}. Since in the approximation scheme, the sequence should be initialized with a bounded function, we will introduce a sequence $(U_N^0)_N$ of bounded functions, satisfying $U_N^0\ge \eta_N>0$, and approximating $U^{in}$ in $\L^1\cap\H^{-1}_m(\Omega)\times\L^{\gamma_2}\cap\H^{-1}_m(\Omega)$. In order to apply Theorem \ref{th:existence_scheme},
 we have to exhibit the vectorial functions $A$ and $R$ and check the assumptions \textbf{H}. We define here
\begin{align}
\label{eq:R}R:\R_+^2 &\longrightarrow \R^2\\
X:= \begin{pmatrix}
 x_1\\
 x_2
 \end{pmatrix}
 &\longmapsto 
\begin{pmatrix}
\nonumber  x_1 \,(\rho_1-x_1^{s_{11}}-x_2^{s_{12}})\\
x_2 \, (\rho_2-x_2^{s_{22}}-x_1^{s_{21}})
\end{pmatrix};\\
\nonumber &\\
\label{eq:A}A:\R_+^2 &\longrightarrow \R^2\\
X&\longmapsto 
\begin{pmatrix}
\nonumber x_1 \, (d_1+x_2^{\gamma_2})\\
  x_2 \, (d_2+x_1^{\gamma_1})
\end{pmatrix}.
\end{align}
We easily check that \textbf{H1} is satisfied, and since for $X\geq 0$, $R(X)\leq \max(\rho_1,\rho_2) X$ and $A(X) \geq \min(d_1,d_2) X$, so is $\textbf{H2}$. As for \textbf{H3}, this falls within the scope of the particular case treated in 
 the Appendix (see Proposition \ref{prop:2spec}). Indeed,  
\begin{align*}
\D(A)(X) = \begin{pmatrix}
d_1+x_2^{\gamma_2} & \gamma_2 x_1 x_2^{\gamma_2-1} \\
\gamma_1 x_2 x_1^{\gamma_1-1} & d_2+x_1^{\gamma_1}\\
\end{pmatrix},
\end{align*}
and this matrix has a positive determinant for $x_1,x_2>0$, since $\gamma_1\gamma_2<1$. We therefore apply Theorem \ref{th:existence_scheme_it} (for $\tau =T/N$ small enough) in order to get the existence of a sequence $(U^k_N)_{0\leq k\leq N-1}$, which is (for $k\geq 1$) a strong solution (in the sense of Definition \eqref{def:strongsol}) of
\begin{align}
\label{eq:scheme:ex}
\frac{U^k_N-U^{k-1}_N}{\tau} -\Delta [A(U^k_N)] &= R(U^k_N)\text{ on }\Omega,\\
\pa_n A(U^k_N) &=0 \text{ on } \pa\Omega.
\end{align}

\subsection{Uniform estimates}

We aim at passing  to the limit $\tau \rightarrow 0$ in identity (\ref{eq:scheme:ex}).
 In order to do so, we need uniform (w.r.t. $\tau,N$) estimates. We recall here that thanks to Theorem \ref{th:existence_scheme_it}, we know that for all $p\in[1,\infty[$,
\begin{align}
\label{recall1}U^k&\in\mathscr{C}^0(\overline{\Omega})^2, \\
\label{recall2}U^k &\geq \eta_{A,R,N} >0,\\
\label{recall3}A(U^k)&\in\W^{2,p}_\nu(\Omega)^2,
\end{align}
and in fact, using Proposition \ref{prop:2spec}, we see that $A$ is a $\mathscr{C}^1$-diffeomorphism from $\R_+^*\times\R_+^*$ to itself whence, using \eqref{recall2} -- \eqref{recall3}, the following regularity estimate, for all $p\in[1,\infty[$,
\begin{align}
\label{recall4}U^k\in\W^{1,p}(\Omega)^2.
\end{align}
But as noticed in Remark \ref{rem:deptau},  estimates \eqref{recall1} -- \eqref{recall4} all depend on $\tau$, and we cannot use them in the passage to the limit. They will however be of great help to justify several computations on the approximated system. For instance, \eqref{recall3} allows to see that the equation defining the scheme is meaningful 
(that is, all terms are defined almost everywhere).
 
\subsubsection{Dual and $\L^1$ estimates}

Thanks to Section \ref{sec:apriori}, we already have proven three (uniform w.r.t. $\tau$) estimates : the dual estimate (\ref{for38})
 and the $\L^1$ estimates (\ref{ineq:l1}) and (\ref{ineq:l1bis}) given by Theorem \ref{th:existence_scheme_it}.

\subsubsection{Entropy estimate}

The following estimate on the sequence $(U^k_N)_{0\leq k\leq N-1}$ holds
(in this paragraph, we drop the index $N$ to ease the presentation):

\begin{Propo}[\textbf{H}]\label{propo:discrent}
There exists a constant $K_T>0$ depending only on $T$, $\Omega$ and the data of the equations ($r_i$, $d_i$, $\gamma_i$, $s_{ij}$) such that (for $\tau$ small enough), for all $N\in\N$, the corresponding sequence $(U^k)_{0\leq k\leq N-1}$ satisfying 
 \eqref{eq:scheme:ex}, also satisfies the following bound: 
\begin{align} \label{new53}
\max_{0\le \ell \le N-1}\int_\Omega (u_2^\ell)^{\gamma_2} + \sum_{k=0}^{N-1} \tau \int_{\Omega} (u_2^k)^{\gamma_2} \left\{ (u_1^k)^{s_{21}}+(u_2^k)^{s_{22}} \right\}\\
+ \sum_{k=0}^{N-1} \tau \int_{\Omega} \Big\{|\nabla (u_1^k)^{\gamma_1/2}|^2  + |\nabla (u_2^k)^{\gamma_2/2}|^2 + \left|\nabla \sqrt{(u_1^k)^{\gamma_1} \,(u_2^k)^{\gamma_2}}\right|^2 \Big\} \\
\leq K_T \,( 1+ \|u_1^{in}\|_{1}+ \|u_2^{in}\|_{\gamma_2}^{\gamma_2}).
\end{align}
\end{Propo}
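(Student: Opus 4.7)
The plan is to replay, at the semi-discrete level, the continuous entropy computation of Subsection~\ref{subsec:specent}, using the entropy constructed in Proposition~\ref{propo:ent2spec}. Concretely I set
\[
\phi_1(z) = \frac{\gamma_1}{1-\gamma_1}\Bigl(z-\frac{z^{\gamma_1}}{\gamma_1}\Bigr)+1, \qquad \phi_2(z)=\frac{z^{\gamma_2}}{\gamma_2-1}-\frac{\gamma_2\,z}{\gamma_2-1}+1,
\]
and $\Phi(U)=\phi_1(u_1)+\phi_2(u_2)$: both $\phi_i$ are nonnegative and convex on $\R_+^*$ with minimum value $0$ attained at $z=1$, and since $\gamma_1\gamma_2<1$, $\Phi$ is an entropy in the sense of Definition~\ref{defi:entro} for the diffusion matrix $A$ of \eqref{eq:A}. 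All the manipulations below are rigorously justified because the regularity \eqref{recall1}--\eqref{recall4} and the strict positivity \eqref{recall2} make the entropy variables $\phi_i'(u_i^k)$ smooth on $\overline{\Omega}$.

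I would multiply the two coordinates of the scheme \eqref{eq:scheme:ex} respectively by $\phi_1'(u_1^k)$ and $\phi_2'(u_2^k)$, add them up and integrate on $\Omega$. Convexity of $\phi_i$ provides the discrete chain rule $\phi_i(u_i^k)-\phi_i(u_i^{k-1})\le \phi_i'(u_i^k)(u_i^k-u_i^{k-1})$, so that the time term dominates $\tau^{-1}\int_\Omega[\Phi(U^k)-\Phi(U^{k-1})]$. The diffusion term, after integration by parts allowed by \eqref{recall3}, reads $\sum_{j=1}^d\int_\Omega \langle\partial_j U^k,\,\D^2\Phi(U^k)\,\D(A)(U^k)\,\partial_j U^k\rangle$; expanding it exactly as in Subsection~\ref{subsec:specent} (where the positivity comes from $\gamma_1\gamma_2<1$) yields the lower bound
\[
c\int_\Omega\Big\{|\nabla (u_1^k)^{\gamma_1/2}|^2+|\nabla (u_2^k)^{\gamma_2/2}|^2+\bigl|\nabla\sqrt{(u_1^k)^{\gamma_1}(u_2^k)^{\gamma_2}}\bigr|^2\Big\}
\]
for some $c>0$ depending only on $d_i$ and $\gamma_i$.

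The reaction integral $\int\sum_i \phi_i'(u_i^k)\,R_i(U^k)$ must be split into a genuinely dissipative part and a part to be absorbed. Writing $\phi_2'(u_2)R_2(U)=\frac{\gamma_2}{\gamma_2-1}(u_2^{\gamma_2}-u_2)(\rho_2-u_2^{s_{22}}-u_1^{s_{21}})$ produces precisely the desired negative contribution $-\frac{\gamma_2}{\gamma_2-1}\int u_2^{\gamma_2}(u_1^{s_{21}}+u_2^{s_{22}})$, together with the positive parts $\rho_2 u_2^{\gamma_2}$, $u_2^{1+s_{22}}$, $u_2 u_1^{s_{21}}$; the analogous computation for $\phi_1'(u_1)R_1(U)$ generates positive terms, the most dangerous being $u_1^{\gamma_1}u_2^{s_{12}}$. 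The first two are absorbed thanks to $\gamma_2>1$ (Young against a fraction of $\int u_2^{\gamma_2+s_{22}}$, the $\rho_2 u_2^{\gamma_2}$ piece feeding a Gr\"onwall step), while $u_2 u_1^{s_{21}}$ is controlled using $s_{21}<2$ together with the duality bound \eqref{for38} (which yields uniform control of $\sum_k\tau\int u_1^2$ and of $\sum_k\tau\int u_1^{\gamma_1}u_2^2$). The cross-term $u_1^{\gamma_1}u_2^{s_{12}}$ is interpolated, via a two-step Young inequality, between $u_2^{\gamma_2+s_{22}}$ (a fraction of the new dissipation) and $u_1^{\gamma_1}u_2^2$ (controlled by \eqref{for38}); the exponent condition $s_{12}<\gamma_2+s_{22}/2$ is exactly what makes this interpolation succeed.

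Finally, summing the resulting discrete inequality for $k=1,\dots,n$ with $n\le N-1$ causes the entropy to telescope, and a discrete Gr\"onwall argument absorbs the $\rho_i$ contributions and yields \eqref{new53}; the right-hand side of \eqref{new53} comes from $\int\Phi(U^0)$, which the explicit form of $\phi_1,\phi_2$ controls by $1+\|u_1^{in}\|_1+\|u_2^{in}\|_{\gamma_2}^{\gamma_2}$ (the stray $\int u_2^n$ in $\int\phi_2(u_2^n)$ being handled by \eqref{ineq:l1}). The real difficulty of the argument is not the entropy-production computation itself, which mimics Subsection~\ref{subsec:specent}, but the careful bookkeeping of the positive reaction terms: the interpolation of $u_1^{\gamma_1}u_2^{s_{12}}$ against the dissipation and the duality estimate is the single step where the sharp hypothesis $s_{12}<\gamma_2+s_{22}/2$ is used in an essential way.
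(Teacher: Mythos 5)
Your choice of entropy, the discrete convexity inequality $\Phi(U^k)-\Phi(U^{k-1})\le\langle\nabla\Phi(U^k),U^k-U^{k-1}\rangle$, the integration by parts and the lower bound on the dissipation all coincide with the paper's argument. The gap is in your bookkeeping of the reaction terms. The coefficient of $-u_2^{s_{12}}$ in $\phi_1'(u_1)R_1(U)$ is $u_1\phi_1'(u_1)=\tfrac{\gamma_1}{1-\gamma_1}(u_1-u_1^{\gamma_1})$, which is bounded below on all of $\R_+$ by a negative constant $m_1$ (it vanishes at $0$ and $1$, is bounded on $[0,1]$, and is nonnegative for $u_1\ge1$). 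Keeping this coefficient intact, the only term to absorb is $|m_1|\,u_2^{s_{12}}\le C_\ep+\ep\, u_2^{\gamma_2+s_{22}}$, which a fraction of the new dissipation swallows. You instead split off $u_1^{\gamma_1}u_2^{s_{12}}$ as ``the most dangerous term'' and propose to interpolate it between $u_2^{\gamma_2+s_{22}}$ and $u_1^{\gamma_1}u_2^{2}$. That interpolation cannot close: matching the factor $u_1^{\gamma_1}$ forces the entire Young weight onto $u_1^{\gamma_1}u_2^{2}$, so the argument requires $s_{12}\le 2$, whereas the hypotheses allow $s_{12}$ up to $\gamma_2+s_{22}/2$, which can be much larger than $2$. (Your claim that $s_{12}<\gamma_2+s_{22}/2$ is ``exactly'' what makes the interpolation succeed is also off: the entropy estimate only needs $s_{12}<\gamma_2+s_{22}$; the sharper condition is used later, in the passage to the limit, to place $u_1u_2^{s_{12}}$ in $\L^{1^+}_{t,x}$.) The fix is simply not to separate $u_1^{\gamma_1}u_2^{s_{12}}$ from its partner $-u_1u_2^{s_{12}}$.

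A second, quieter problem: you lean on the duality bound \eqref{for38} both for $u_2u_1^{s_{21}}$ and inside the interpolation. The constant in \eqref{for38} depends on $\|U^{in}\|_{\H^{-1}_m(\Omega)}$, so your $K_T$ would inherit that dependence, which the statement of the proposition (and Theorem \ref{th:globalex}, where only $D_s$ is allowed to carry it) excludes. The paper needs no duality here: the leftover positive terms $u_1$ and $u_2(u_2^{s_{22}}+u_1^{s_{21}})$ are components of $R^{-}(U)\le \rho U-R(U)$, hence controlled by $C(\|u_1^{in}\|_1+\|u_2^{in}\|_1)$ via the $\L^1$ estimate \eqref{ineq:l1bis} alone. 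With these two corrections your scheme reduces to the paper's proof.
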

\begin{proof}
We introduce as in Subsection \ref{subsec:specent} the functions
\begin{align*}
\phi_i(z) := \frac{\gamma_i}{\gamma_i-1}\left(\frac{t^{\gamma_i}}{\gamma_i}-t +1- \frac{1}{\gamma_i}\right).
\end{align*}
Since $0<\gamma_i \neq 1$, one easily checks that $\phi_i$ is a nonnegative continuous convex function defined on $\R_+$, and smooth on $\R_+^*$, so that for all $z,y>0$,
\begin{align}
\label{ineq:conv}\phi_i'(z)(z-y) \geq \phi_i(z)-\phi_i(y).
\end{align}
We define $\Phi:\R^2\rightarrow\R$ by the formula
\begin{align*}
\Phi(x_1,x_2) := \phi_1(x_1) + \phi_2(x_2).
\end{align*}
Using \eqref{recall1} -- \eqref{recall4}, we see that for all $k$, $\nabla\Phi(U^k)$ is well-defined and belongs to $ \in\W^{1,p}(\Omega)^2$ for all $p\in[1,\infty[$. We take the inner product of $\tau \nabla\Phi(U^k)$ with the vectorial equation \eqref{eq:scheme:ex} (which has a meaning a.e.). Using \eqref{ineq:conv}, we get 
\begin{align*}
\Phi(U^k)-\Phi(U^{k-1})-\tau \langle \nabla \Phi (U^k),\Delta [A(U^k)]\rangle \leq \tau\langle \nabla \Phi (U^k),R(U^k)\rangle.
\end{align*}
We now plan to reproduce (but this time at the rigorous level) the formal computation performed in Subsection \ref{subsec:genent}. Since each term of the previous inequality is (at least) integrable, we can integrate it on $\Omega$, and sum over $1\leq k\leq \ell$ in order to get 
\begin{align}
\label{ineq:entdis}\int_\Omega \Phi(U^\ell) - \sum_{k=1}^\ell \tau \int_{\Omega} \langle \nabla \Phi (U^k),\Delta [A(U^k)]\rangle \leq \sum_{k=1}^\ell \tau \int_\Omega \langle \nabla \Phi (U^k),R(U^k)\rangle + \int_\Omega \Phi(U^0).
\end{align}
Now since $\nabla \Phi(U^k) \in\W^{1,p}(\Omega)^2$  for all $p<\infty$ (see above), the following integration by parts rigorously holds
 (because $A(U^k)$ satisfies homogeneous Neumann boundary conditions)
\begin{align}
\nonumber- \int_{\Omega} \langle \nabla \Phi(U^k),\Delta A(U^k)\rangle  &= \sum_{j=1}^2\int_\Omega \langle \partial_j U^k, \D^2(\Phi)(U^k) \D(A)(U^k)\partial_j U^k\rangle \\
\label{eq:intparts}&= \int_\Omega \langle \nabla U^k, \D^2(\Phi)(U^k) \D(A)(U^k)\nabla U^k\rangle ,
\end{align} 
where $\D(A)$ is the Jacobian matrix of $A$, and $\D^2(\Phi)$ is the Hessian matrix of $\Phi$ (and the last line is a small abuse of notation). At this stage, we know, thanks to Proposition \ref{propo:ent2spec} and the very definition of $\Phi$, that the r.h.s. of \eqref{eq:intparts} is nonnegative. However, as noticed in Remark \ref{rem:gradients}, the mere lower bound by $0$ is most probably a bad estimate. Indeed, we will see that the r.h.s. of \eqref{eq:intparts} will allow us to obtain estimates for the gradient of $U^k$ (as it was the case in Subsection \ref{subsec:specent}). We can first compute $\D(A)$ (recall the definition of $A$ in \eqref{eq:A} ) and $\D^2(\Phi)$ : 
\begin{align*}
\D(A)(X) = \begin{pmatrix}
d_1+x_2^{\gamma_2} & \gamma_2 x_1 x_2^{\gamma_2-1} \\
\gamma_1 x_2 x_1^{\gamma_1-1} & d_2+x_1^{\gamma_1}\\
\end{pmatrix},
 \quad
\D^2(\Phi)(X) = \begin{pmatrix}
 \gamma_1 x_1^{\gamma_1-2}& 0 \\
0 &\gamma_2 x_2^{\gamma_2-2}
\end{pmatrix}.
\end{align*}
Writing $\D(A)(X) = \text{diag}(d_1,d_2) + M_A(X)$, we get 
\begin{align*}
 \int_\Omega \langle \nabla U^k, \D^2(\Phi)(U^k) \D(A)(U^k)\nabla U^k\rangle & =\int_\Omega \langle \nabla U^k, \D^2(\Phi) \,\text{diag}(d_1,d_2)(U^k) \nabla U^k\rangle \\
& + \int_\Omega \langle \nabla U^k, \D^2(\Phi)(U^k) M_A(U^k)\nabla U^k\rangle,
\end{align*}
and since $\D^2(\Phi)$ is diagonal,
\begin{align}
  \nonumber\int_\Omega \langle \nabla U^k, \D^2(\Phi)(U^k) \D(A)(U^k)\nabla U^k\rangle & =d_1 \gamma_1 \int_\Omega (u_1^k)^{\gamma_1-2}  |\nabla u_1^k|^2 + d_2\gamma_2 \int_\Omega (u_2^k)^{\gamma_2-2}  |\nabla u_2^k|^2\\
\label{eq:diago}& + \int_\Omega \langle \nabla U^k, S(U^k)\nabla U^k\rangle,
\end{align}
where $S(X) = \D^2(\Phi)(X)M_A(X)$. Let us now write 
\begin{align*}
S(X) &= \begin{pmatrix}
 \gamma_1 x_1^{\gamma_1-2}& 0 \\
0 &\gamma_2 x_2^{\gamma_2-2}
\end{pmatrix}\begin{pmatrix}
x_2^{\gamma_2} & \gamma_2 x_1 x_2^{\gamma_2-1} \\
\gamma_1 x_2 x_1^{\gamma_1-1} & x_1^{\gamma_1}\\
\end{pmatrix}\\
&=\begin{pmatrix}
\gamma_1 x_1^{\gamma_1-2} x_2^{\gamma_2} & \gamma_1\gamma_2 x_1^{\gamma_1-1} x_2^{\gamma_2-1} \\
\gamma_1\gamma_2 x_2^{\gamma_2-1} x_1^{\gamma_1-1} & \gamma_2 x_2^{\gamma_2-2} x_1^{\gamma_1}\\
\end{pmatrix}\\
&=x_1^{\gamma_1} x_2^{\gamma_2}\begin{pmatrix}
\gamma_1 x_1^{-2}  & \gamma_1\gamma_2 x_1^{-1} x_2^{-1} \\
\gamma_1\gamma_2 x_2^{-1} x_1^{-1} & \gamma_2 x_2^{-2}\\
\end{pmatrix},
\end{align*}
so that eventually 
\begin{align*}
S(X) &=
x_1^{\gamma_1} x_2^{\gamma_2} 
 \begin{pmatrix}
 x_1^{-1} & 0 \\
0 &  x_2^{-1}
\end{pmatrix}
\stackrel{:=L}{\overbrace{\begin{pmatrix}
\gamma_1 & \gamma_1\gamma_2  \\
\gamma_1\gamma_2  & \gamma_2 \\
\end{pmatrix}}}
 \begin{pmatrix}
 x_1^{-1} & 0 \\
0 &  x_2^{-1}
\end{pmatrix},
\end{align*}
which means that (the quadratic form associated to) the matrix $S(X)$ acts on $W:=(w_1,w_2)$  through
\begin{align*}
\langle W,S(X) W \rangle = x_1^{\gamma_1} x_2^{\gamma_2}  \langle Z, LZ\rangle,
\end{align*}
where $Z := (w_1/x_1,w_2/x_2)$. But since $\gamma_1\gamma_2<1$, the quadratic form associated to $L$ is positive definite, whence the existence of a constant $C: = C(\gamma_1, \gamma_2)>0$ such as 
\begin{align*}
\langle W,S(X) W \rangle \geq C \left(\frac{w_1^2}{x_1^2} + \frac{w_2^2}{x_2^2}  \right)x_1^{\gamma_1} x_2^{\gamma_2}.
\end{align*}
Going back to \eqref{eq:diago} and using the previous inequality, we get 
\begin{align*}
 \nonumber\int_\Omega \langle \nabla U^k, \D^2(\Phi)(U^k) \D(A)(U^k)\nabla U^k\rangle &\geq d_1 \gamma_1 \int_\Omega (u_1^k)^{\gamma_1-2}  |\nabla u_1^k|^2 + d_2 \gamma_2 \int_\Omega (u_2^k)^{\gamma_2-2}  |\nabla u_2^k|^2\\
& + C\int_\Omega \left(\frac{|\nabla u_1^k|^2}{(u_1^k)^2} +  \frac{|\nabla u_2^k|^2}{(u_2^k)^2} \right) (u_1^k)^{\gamma_1} (u_2^k)^{\gamma_2}\\
& \geq C \int_{\Omega} \Big\{|\nabla (u_1^k)^{\gamma_1/2}|^2  + |\nabla (u_2^k)^{\gamma_2/2}|^2   \Big\}\\
&+ C \int_\Omega \left|\nabla \sqrt{(u_1^k)^{\gamma_1} \,(u_2^k)^{\gamma_2}}\right|^2 ,
\end{align*}
where we changed the constant $C$ in the last line. If we call $\Gamma_k$ the r.h.s. of the previous equality, we sum up the previous estimates \eqref{ineq:entdis} and write
\begin{align*}
\int_\Omega \Phi(U^\ell) + \sum_{k=1}^\ell \tau \Gamma_k  \leq C \left(\sum_{k=1}^\ell \tau \int_\Omega \langle \nabla \Phi (U^k),R(U^k)\rangle + \int_\Omega \Phi(U^0)\right).
\end{align*}
Since $\U^0$ (=$U^0_N$) approaches $U^{in}$ in $\L^1(\Omega)\times\L^{\gamma_2}(\Omega)$ and $|\Phi(x_1,x_2)|\lesssim 1+|x_1| + |x_2|^{\gamma_2}$, it is easy to check that $\| \Phi(U^0_N) \|_1 \lesssim 1+\|u_1^{in}\|_1+\|u_2^{in}\|_{\gamma_2}^{\gamma_2}$ up to some irrelevant constant (independent of $N$ of course). On the other hand, $\phi_1\geq 0$ and $z^{\gamma_2} \lesssim \phi_2(z) + z $. Using the $\L^1$ estimate \eqref{ineq:l1} given by Theorem \ref{th:existence_scheme_it}, we infer eventually
\begin{align*}
\int_\Omega (u_2^\ell)^{\gamma_2} + \sum_{k=1}^\ell \tau \Gamma_k  \leq C \left( \sum_{k=1}^\ell \tau \int_\Omega \langle \nabla \Phi (U^k),R(U^k)\rangle + 1 + \|u_1^{in}\|_1 + \|u_2^{in}\|_{\gamma_2}^{\gamma_2}\right).
\end{align*}
\medskip

Obtaining the desired estimate now
reduces to handling the reaction terms. Notice that from the definition of $R$,  $R(X) = (\rho_1 x_1,\rho_2 x_2) - R^{-}(X)$,
 with $R^{-}(X)\geq 0$ given by
\begin{align*}
R^{-}(X) = 
\begin{pmatrix}
\nonumber  x_1(x_1^{s_{11}}+x_2^{s_{12}})\\
x_2(x_2^{s_{22}}+x_1^{s_{21}})
\end{pmatrix}.
\end{align*}
Since $\rho=\max(\rho_1,\rho_2)$, estimate \eqref{ineq:l1bis} of Theorem \ref{th:existence_scheme_it} implies 
\begin{align}
\label{ineq:R-}\sum_{k=1}^{N-1}\tau\int_\Omega R^-(U^k) \leq C(\|u_1^{in}\|_1 + \|u_2^{in}\|_1).
\end{align}

Using the definition of $\phi_i$, one easily checks that for all $x_i>0$, $x_i \phi'_i(x_i) \lesssim \phi_i(x_i) + 1$, up  to some irrelevant constant.
 We hence get 
\begin{align*}
\int_\Omega (u_2^\ell)^{\gamma_2} + \sum_{k=1}^\ell \tau \Gamma_k  &\leq C\bigg[\sum_{k=1}^\ell  \tau\int_\Omega \Phi(U^k) + 1 + \|u_1^{in}\|_1 + \|u_2^{in}\|_{\gamma_2}^{\gamma_2}\bigg] \\
&-C  \sum_{k=1}^\ell \tau \int_\Omega \langle \nabla \Phi (U^k),R^{-}(U^k)\rangle  ,
\end{align*}
which, using $\phi_1(z) \lesssim 1+z$ and $\phi_2(z)\lesssim 1+z^{\gamma_2}$ together with estimate \eqref{ineq:l1}, may be written
\begin{align*}
\int_\Omega (u_2^\ell)^{\gamma_2} + \sum_{k=1}^\ell \tau \Gamma_k  &\leq C\bigg[\sum_{k=1}^\ell  \tau\int_\Omega (u_2^k)^{\gamma_2} + 1 + \|u_1^{in}\|_1 + \|u_2^{in}\|_{\gamma_2}^{\gamma_2}\bigg] \\
&-C  \sum_{k=1}^\ell \tau \int_\Omega \langle \nabla \Phi (U^k),R^{-}(U^k)\rangle.
\end{align*}
Expanding $\langle \nabla\Phi(X),R^-(X)\rangle$, we get
\begin{align*}
\langle \nabla\Phi(X),R^-(X)\rangle = \phi_1'(x_1) x_1(x_1^{s_{11}}+x_2^{s_{12}}) + \phi_2'(x_2) x_2(x_2^{s_{22}}+x_1^{s_{21}}).
\end{align*}
Since $\gamma_2>1$, we can write $\phi_2'(x_2) x_2 = c_2 (x_2^{\gamma_2}-x_2)$ with $c_2>0$. Furthermore, if $x_1\geq 1$, one easily checks that $\phi_1'(x_1) \geq 0$, and on the other hand, $x_1\mapsto x_1\phi_1'(x_1)$ continuously extends to $\R_+$ and is hence lower bounded for $x_1\leq 1$ by some negative constant $m_1$. All in all, we get for $X\in\R_+^2$ the estimate
\begin{align*}
\langle \nabla\Phi(X),R^-(X)\rangle \geq m_1(x_1^{s_{11}}+x_2^{s_{12}}) +  c_2 (x_2^{\gamma_2}-x_2)(x_2^{s_{22}}+x_1^{s_{21}}),
\end{align*}
whence 
\begin{align*}
c_2 x_2^{\gamma_2}(x_2^{s_{22}}+x_1^{s_{21}})-\langle \nabla\Phi(X),R^-(X)\rangle \leq |m_1|(x_1^{s_{11}}+x_2^{s_{12}}) +  c_2 x_2(x_2^{s_{22}}+x_1^{s_{21}}).
\end{align*}
Remember now that $s_{11}<1$ and $s_{12}<\gamma_2+s_{22}/2$. The last inequality implies in particular the existence of a constant $C>0$ such as $|m_1|x_2^{s_{12}}\leq C+ c_2 x^{\gamma_2+s_{22}}/2$, so that we have using $x_1^{s_{11}}\leq 1 + x_1$
\begin{align*}
\frac{c_2}{2} x_2^{\gamma_2}(x_2^{s_{22}}+x_1^{s_{21}})-\langle \nabla\Phi(X),R^-(X)\rangle \leq |m_1|(1+x_1) +  c_2 x_2(x_2^{s_{22}}+x_1^{s_{21}}),
\end{align*}
and we may eventually write (changing the constant $C$ again)
\begin{align}
\label{ineq:togron}\int_\Omega (u_2^\ell)^{\gamma_2} + \sum_{k=1}^\ell \tau \int_\Omega (u_2^k)^{\gamma_2}\left\{ (u_1^k)^{s_{21}} + (u_2^k)^{s_{22}} \right\}  + \sum_{k=1}^\ell \tau \Gamma_k\\
\leq C \left( \sum_{k=1}^\ell  \tau\int_\Omega (u_2^k)^{\gamma_2} + 1 + \|u_1^{in}\|_1 + \|u_2^{in}\|_{\gamma_2}^{\gamma_2} \right),
\end{align}
where we used estimates \eqref{ineq:l1} and \eqref{ineq:l1bis} (with its consequence \eqref{ineq:R-}) to get the estimate 
\begin{align*}
 \sum_{k=1}^\ell  \tau\int_\Omega u_1^k + \sum_{k=1}^\ell  \tau\int_\Omega  u_2^k \{(u_2^k)^{s_{22}}+(u_1^k)^{s_{21}}\}  \leq C(\|u_1^{in}\|_1 + \|u_2^{in}\|_1).
\end{align*}
\medskip
We may now conclude using a discrete Gronwall Lemma. Indeed, if we call $w_\ell$ the first integral in the l.h.s. of \eqref{ineq:togron} and define $w_0:=C(1+\|u_1^{in}\|_1+\|u_2^{in}\|_{\gamma_2}^{\gamma_2})$, we have (since $\Gamma_k\geq 0$)
\begin{align*}
(1-C\tau)w_\ell &\leq C\tau \sum_{k=1}^{\ell-1}  w_k + w_0,
\end{align*}
whence, as soon as $C\tau <1$, 
\begin{align*}
(1-C\tau)^{\ell}\, w_\ell \leq C\tau\, \sum_{k=1}^{\ell-1} (1-C\tau)^{\ell-1}\, w_k
 +  (1-C\tau)^{\ell -1} \,w_0,
\end{align*}
\medskip
from which we get by a straightforward induction 
\begin{align*}
w_\ell \leq \frac{w_0}{(1-C\tau)^{\ell}}\, ,
\end{align*}
and the conclusion of the proof of Proposition \ref{propo:discrent} easily follows
 from this last estimate.
\end{proof}

\subsection{Passage to the limit} \label{sub53}

 We come back to the proof of Theorem \ref{th:globalex}. We introduce at this level the 

\begin{definition}\label{def:pro}
For a given family $h:=(h^k)_{0\leq k \leq N-1}$ of functions defined on $\Omega$, we denote by $\underline{h}^{N}$ the step (in time) function defined on $\R \times \Omega$ by
\begin{align*}
\underline{h}^{N}(t,x):= \sum_{k=0}^{N-1} h^k(x) \mathbf{1}_{]k\tau,(k+1)\tau]} (t).
\end{align*}
We then have by definition, for all $p,q\in[1,\infty[$,
\begin{align*}
\|\underline{h}^{N}\|_{\L^q\big([0,T];\L^p(\Omega)\big)}=\left(\sum_{k=0}^{N-1} \tau \|h^{k}\|^q_{\L^p(\Omega)}\right)^{1/q},
\end{align*}
and in particular
\begin{align*}
\|\underline{h}^{N}\|_{\L^p(Q_T)}=\left(\sum_{k=0}^{N-1} \tau \int_\Omega |h^{k}(x)|^p\dd x\right)^{1/p}. 
\end{align*}
\end{definition}
Using an analogous notation for the family of vectors $(U^k_N)_{0\le k\le N-1}$, one  easily checks
that equation \eqref{eq:scheme:ex} can be rewritten as (since the functions are extended by $0$ on $\R_-$)
\begin{align} \label{ptl}
\partial_t \underline{U}^N &= \sum_{k=1}^{N-1} (U^k_N-U^{k-1}_N) \otimes \delta_{t^{k}}  + U^0_N \otimes \delta_0  \text{ in } \mathscr{D}'(]-\infty,T[\times\Omega)^2 \\
\label{eq:scheme:dis}  &= \sum_{k=1}^{N-1}\tau  ( (\Delta[A(U^k)] + R(U^k)) \otimes \delta_{t^{k}}  + U^0_N \otimes \delta_0  \text{ in } \mathscr{D}'(]-\infty,T[\times\Omega)^2 ,
\end{align}
where $t^k =k\tau$. We intend to pass to the limit $N=1/\tau\rightarrow \infty$ 
in eq. (\ref{eq:scheme:dis}).
\medskip

\vspace{2mm}

In order to do so, let us recall the bounds (all are uniform w.r.t. $N$) obtained so far:
 \begin{align}
\label{es:l1}\underline{u_1}^N+\underline{u_2}^N &\din \L^\infty_t(\L^1_x),\\
\label{es:dua1} (\underline{u}_1^N + \underline{u}_2^N)((d_1+(\underline{u}_2^N)^{\gamma_2})\underline{u}_1^N + (d_2+(\underline{u}_1^N)^{\gamma_1})\underline{u}_2^N) &\din \L^1_{t,x}, \\
\label{es:dua2} \underline{u_1}^N+\underline{u_2}^N &\din\L^2_{t,x},\\
\label{es:ent} \underline{u}_2^N &\din\L^\infty_t(\L^{\gamma_2}_x),\\
\label{es:entreac} (\underline{u}_2^N)^{\gamma_2} \left\{ (\underline{u}_1^N)^{s_{21}} + (\underline{u}_2^N)^{s_{22}} \right\}  &\din\L^{1}_{t,x},\\
\label{es:grad1} \nabla(\underline{u}_1^N)^{\gamma_1/2} &\din\L^2_{t,x}, \\
\label{es:grad2} \nabla(\underline{u}_2^N)^{\gamma_2/2} &\din\L^2_{t,x},
\end{align} 
where \eqref{es:l1} is a consequence of estimate (\ref{ineq:l1}),
 \eqref{es:dua1} is a consequence of estimate (\ref{for38}) (both in Theorem \ref{th:existence_scheme_it}), \eqref{es:dua2} is a consequence of \eqref{es:dua1} (each term is nonnegative) and \eqref{es:ent} -- \eqref{es:grad2} are all consequences of estimate (\ref{new53}) in
 Proposition \ref{propo:discrent}. Then, thanks to \eqref{es:entreac} and \eqref{es:dua2},
 we see that
\begin{align}
\underline{u}_i^N \din \L^{\gamma_i^+}_{t,x}. 
\end{align}

\medskip

This  means that $(\underline{u}_i^N)^{\gamma_i/2}\din\L^{2^+}_{t,x}$, so that using \eqref{es:dua1} and writing for $i\neq j$
\begin{align*}
\underline{u}_j^N (\underline{u}_i^N)^{\gamma_i} = \stackrel{\din\L^2_{t,x}}{\overbrace{\underline{u}_j^N (\underline{u}_i^N)^{\gamma_i/2}}} \stackrel{\din\L^{2^+}_{t,x}}{\overbrace{(\underline{u}_i^N)^{\gamma_i/2}}},
\end{align*}
we get
\begin{align}
\label{es:lapla}A(\underline{U}^N) \din\L^{1^+}_{t,x}\times \L^{1^+}_{t,x}.
\end{align}
As for the reaction terms, the coefficients $s_{ij}$ are precisely chosen in such a way that the corresponding nonlinearities may all be handled. Indeed, $s_{11} <1$, so that $(\underline{u}_1^N)^{s_{11}+1}$ is bounded in $\L^{1^+}_{t,x}$ using \eqref{es:dua2}. Then  $(\underline{u}_2^N)^{s_{22}+1}$ is bounded in $\L^{1^+}_{t,x}$ using \eqref{es:entreac}. Also, since $s_{21}<2$,  we see that 
\begin{align*}
\underline{u}_2^N (\underline{u}_1^N)^{s_{21}} &= \stackrel{\din\L^{\gamma_2}_{t,x}}{\overbrace{\underline{u}_2^N (\underline{u}_1^N)^{s_{21}/\gamma_2}}} \stackrel{\din\L^{(\gamma_2')^+}_{t,x}}{\overbrace{(\underline{u}_1^N)^{s_{21}/\gamma_2'}}} \din\L^{1^+}_{t,x},
\end{align*}
where $1/\gamma_2+1/\gamma_2'=1$. Now if $s_{12}\le \gamma_2/2$, we know from \eqref{es:dua1} that $\underline{u}_1^N (\underline{u}_2^N)^{s_{12}} $ is bounded in $\L^{1^+}_{t,x}$. 
Otherwise, $\gamma_2/2<s_{12}<\gamma_2+s_{22}/2$, and we use \eqref{es:dua1} and \eqref{es:entreac} in order to get
 \begin{align*}
\underline{u}_1^N (\underline{u}_2^N)^{s_{12}} &= \stackrel{\din\L^{2}_{t,x}}{\overbrace{\underline{u}_1^N (\underline{u}_2^N)^{\gamma_2/2}}} \stackrel{\din\L^{2^+}_{t,x}}{\overbrace{(\underline{u}_2^N)^{s_{12}-\gamma_2/2}}} \din\L^{1^+}_{t,x}.
\end{align*}
Finally, 
\begin{align}
\label{es:reac}R(\underline{U}^N) \din\L^{1^+}_{t,x}\times\L^{1^+}_{t,x}.
\end{align}
The previous bounds allow (at least) to obtain (up to extraction of some subsequence) $\L^1_{t,x}$ weak convergence (thanks to Dunford-Pettis Theorem) for $(\underline{U}^N)_N$, $(A(\underline{U}^N))_N$ and $(R(\underline{U}^N))_N$.  The strategy is then classical: one has
 to commute the weak limits and non-linearities, possibly by proving some  strong compactness. Estimates \eqref{es:grad1} -- \eqref{es:grad2} are of course very helpful in this situation, since they show that  oscillations w.r.t. the $x$ variable cannot develop.

\vspace{2mm}

\textsf{\emph{But}} since we kept in our assumptions the possibility that $\gamma_2>2$, estimate \eqref{es:grad2} degenerates. Indeed
$\nabla f = \frac{2}{\gamma_2}  f^{\frac{2-\gamma_2}{2}} \nabla f^{\gamma_2/2}$,
and for small values of $f$, no information on $\nabla f$ can  be recovered from $\nabla ( f^{\gamma_2/2})$. This type of situation is frequent in the study of degenerate parabolic equation (such as the porous medium equation for instance) and the usual Aubin-Lions Lemma 
cannot directly be applied. Notice that for $(\underline{u}_1^N)_N$, there is no such issue : \eqref{es:grad1} automatically yields an estimate on $\nabla \underline{u}_1^N$ (this is the strategy used to recover compactness in \cite{Chen2006,deslepmou} for instance ) : 
$\nabla \underline{u}_1^N = \frac{2}{\gamma_1}  (\underline{u}_1^N)^{\frac{2-\gamma_1}{2}} \nabla (\underline{u}_1^N)^{\gamma_1/2}$
is bounded (at least) in $\L^1_{t,x}$. Indeed, $(\underline{u}_1^N)^{\frac{2-\gamma_1}{2}}$ is bounded in $\L^{4/(2-\gamma_1)}_{t,x}$ thanks to \eqref{es:dua2} and $\nabla [(\underline{u}_1^N)^{\gamma_1/2}]$ is bounded in $\L^{4/(2+\gamma_1)}_{t,x}$ since it is bounded in $\L^2_{t,x}$ (because of \eqref{es:grad1}). Furthermore, let us write $S_\tau(\underline{U}^N):(t,x)\mapsto \underline{U}^N(t-\tau,x)$. Using \eqref{es:lapla} and \eqref{es:reac} to get
\begin{equation*}
\frac{\underline{U}^N-S_\tau \underline{U}^N}{\tau} =\Delta [A(\underline{U}^N)] + R(\underline{U}^N) \din \L^1_t(\W^{-2,1}_x)\times\L^1_t(\W^{-2,1}_x),
\end{equation*}
one can then apply a discrete version of Aubin-Lions lemma to $(\underline{u}_1^N)_N$ (see for instance \cite{drejun}) to recover strong compactness for $(\underline{u}_1^N)_N$ in $\L^1_{t,x}$.

\vspace{2mm}

To prove that $(\underline{u}_2^N)_N\ddin\L^2_{t,x}$, we first evaluate \eqref{eq:scheme:dis}  on some test function $\Psi\in\mathscr{D}(]-\infty,T[\times\Omega)^2$, to get 
\begin{align}
\nonumber\langle \partial_t \underline{U}^N , \Psi \rangle_{\mathscr{D}',\mathscr{D}}  &= \sum_{k=1}^{N-1}\tau \int_\Omega \langle \Delta[A(U_N^k)]+R(U_N^k),\Psi(t^{k})\rangle + \int_\Omega \langle U^0_N, \Psi(0)\rangle\\
\label{eq:lim}&= \sum_{k=1}^{N-1} \tau \int_\Omega \large\langle A(U_N^k),\Delta\Psi(t^{k})\large\rangle +  \tau \int_\Omega \large\langle R(U_N^k),\Psi(t^{k})\large\rangle + \int_\Omega \langle U^0_N, \Psi(0)\rangle.
\end{align}
Using estimates \eqref{es:lapla}, \eqref{es:reac} and \eqref{es:l1}, we hence have (using $N\tau = T$)
\begin{align*}
 \left|\langle \partial_t \underline{U}^N , \Psi \rangle_{\mathscr{D}',\mathscr{D}} \right|
\leq C_T \|\Psi\|_{\L^\infty_t(\H^{L}_x)},
\end{align*}
where $L$ is a sufficiently large integer. Using this estimate together with \eqref{es:grad2},
 we may apply Lemma 4.1 of \cite{mou} to get $(u_2^N)_N \ddin\L^2_{t,x}$. Then, up to the extraction of a subsequence,
 \begin{equation}\label{cv:strong}
 \underline{U}^N \operatorname*{\longrightarrow}_{N\rightarrow\infty} U \qquad \text{ in } \L^1([0,T]\times\Omega)\times \L^2([0,T]\times\Omega).
 \end{equation}

\medskip

 This strong convergence ensures that the weak limits of $(A(\underline{U}^N))_N$ and $(R(\underline{U}^N))_N$ are respectively $A(U)$ and $R(U)$.

\vspace{2mm}

We now can go back to \eqref{eq:lim}, and write it as
\begin{align*}
-\int_0^T \int_\Omega \langle \underline{U}^N , \partial_t \Psi \rangle  &= \sum_{k=1}^{N-1}\tau \int_\Omega \langle \Delta[A(U_N^k)]+R(U_N^k),\Psi(t^{k})\rangle + \int_\Omega \langle U^0_N, \Psi(0)\rangle,
\end{align*}
so that a straightforward density argument allows to replace $\Psi$ by some test function $\Psi\in\mathscr{C}^1_c([0,T[;\mathscr{C}^2_{\nu}(\overline{\Omega}))^2$. 
\medskip

We get
\begin{align*}
-\int_0^T \int_\Omega \langle \underline{U}^N , \partial_t \Psi \rangle  &= \int_\tau^T \int_\Omega \large\langle A(\underline{U}^N),\Delta\widetilde{\Psi^N}\large\rangle +  \int_\tau^T\int_\Omega \large\langle R(\underline{U}^N),\widetilde{\Psi^N}\large\rangle + \int_\Omega \langle U^0_N, \Psi(0)\rangle,
\end{align*}
where 
\begin{align*}
\widetilde{\Psi}^N(t,x) := \sum_{k=1}^{N-1} \Psi(t^{k},x) \mathbf{1}_{]t^{k},t^{k+1}]}(t) \operatorname*{\longrightarrow}_{N\rightarrow\infty}^{\L^\infty([0,T]\times\Omega)} \Psi, 
\end{align*}
and we have the same convergence for $\Delta \widetilde{\Psi}^N$ towards $\Delta \Psi$.  We now  know that the three sequences $(\underline{U}^N)_N$, $(A(\underline{U}^N))_N$ and $(R(\underline{U}^N))_N$ converge weakly (up to a subsequence) in $\L^1_{t,x}$,  so that the three first integrals of the equality will converge to the expected quantities, that is,
\begin{align}
\label{cv:weak_form1}-\int_0^T \int_\Omega \langle \underline{U}^N , \partial_t \Psi \rangle &\operatorname*{\longrightarrow}_{N\rightarrow\infty} -\int_0^T \int_\Omega \langle U , \partial_t \Psi \rangle, \\
 \int_\tau^T \int_\Omega \large\langle A(\underline{U}^N),\Delta\widetilde{\Psi^N}\large\rangle &\operatorname*{\longrightarrow}_{N\rightarrow\infty} \int_0^T \int_\Omega \large\langle A(U),\Delta\Psi\large\rangle, \\
\label{cv:weak_form3} \int_\tau^T\int_\Omega \large\langle R(\underline{U}^N),\widetilde{\Psi^N}\large\rangle &\operatorname*{\longrightarrow}_{N\rightarrow\infty}  \int_0^T\int_\Omega \large\langle R(U),\Psi\large\rangle,
\end{align}
whereas for the initial datum,
\begin{align}\label{cv:weak_form4}
\int_\Omega \langle U^0_N, \Psi(0)\rangle \operatorname*{\longrightarrow}_{N\rightarrow\infty} \int_\Omega \langle U^{in},\Psi(0)\rangle,
\end{align}
thanks to 
the fact that $(U^0_N)_N$ approaches $U^{in}$ in $\L^1(\Omega)\times\L^{\gamma_2}(\Omega)$. We have proved that $U$ is a nonnegative \emph{local} (in time) (very) weak solution to \eqref{eq:u:reac}--\eqref{eq:uv_neum} on $[0,T]\times \Omega$ (that is, the weak formulation \eqref{def:weak_formul1}--\eqref{def:weak_formul2} is satisfied for all $\psi_1,\,\psi_2$ in $\mathscr{C}^1_c([0,T),\mathscr{C}^2_\nu(\overline{\Omega}))$).

\medskip
Let us now show that we can extend $U$ on $\R_+\times\Omega$ so that it gives a global (in time) solution. To do this, we make appear explicitly the dependency in $\tau$ (and then indirectly in $T=\tau N$) of our semi-discrete approximation : we write $\underline{h}^N_\tau$ the function $\underline{h}^N$ defined in Definition \ref{def:pro}. Notice that it is then clear that given an infinite sequence $(h^k)_{k\in \N}$ of functions defined on $\Omega$, for all $m\in \N-\{0\}$ the function $\underline{h}_{\tau}^{mN}$ is well defined on $\R\times\Omega$ and it coincides with $\underline{h}_{\tau}$ on $[0,mT]\times\Omega$, where $\underline{h}_{\tau}(t,\cdot):=\sum_{k=0}^{\infty} h^k \mathbf{1}_{]k\tau,(k+1)\tau]} (t)$. Applying iteratively Theorem \ref{th:existence_scheme}, we get the existence of an infinite sequence $(U^k)_{k\in \N}$ solving \eqref{eq:scheme} and satisfying \eqref{es:depend_on_tau1}--\eqref{for38} with $N$ replaced by any $N'\ge N$. Then $\underline{U}_{\tau}^{mN}$ is defined for all $m\in\N-\{0\}$ and it furthermore coincides with $\underline{U}_\tau$ on $[0,mT]\times\Omega$. Extracting subsequences, we can perform the proof of convergence on $[0, 2T ]$, $[0, 3T ]$, ..., so that by Cantor’s diagonal argument, we get that convergence \eqref{cv:strong}  (together with the existence of the limit $U$) and convergences \eqref{cv:weak_form1}--\eqref{cv:weak_form4} hold true with $T$ replaced by $m T$ and $\underline{U}^N$ replaced by $\underline{U}_\tau^{mN}$ (or equivalently by $\underline{U}_\tau$), for any $m\in \N-\{0\}$ and for $\Psi$ any test function in $\mathscr{C}^1_c([0,mT[;\mathscr{C}^2_{\nu}(\overline{\Omega}))^2$. At the end of the day, $U$ is defined in $\L^1_{\textnormal{loc}}(\R_+,\L^1(\Omega))\times \L^2_{\textnormal{loc}}(\R_+,\L^2(\Omega))$ and is a global (in time) (very) weak solution to \eqref{eq:u:reac}--\eqref{eq:uv_neum}.

\medskip
To conclude the proof, it suffices to show estimates \eqref{eq:estimationL2}, \eqref{24n}, \eqref{nne} for any $s>0$. This is done by passing to the limit $N\rightarrow\infty$ in estimates \eqref{ineq:l1}, \eqref{for38} and \eqref{new53}, with $T$ and $N$ replaced by some $mT>s$ and $mN$. We use the strong convergence of $\underline{U}_\tau^N$ in $\L^1([0,mT]\times\Omega)$ and Fatou's lemma to compute the limits in \eqref{for38} and the two first integrals in \eqref{new53}. To compute the limits of the remaining terms in \eqref{new53}, we notice that $(\underline{u}_{\tau,1}^N)^{\gamma_1/2},\,(\underline{u}_{\tau,2}^N)^{\gamma_2/2},\, (\underline{u}_{\tau,1}^N)^{\gamma_1/2} (\underline{u}_{\tau,2}^N)^{\gamma_2/2}\din \L^{2+}([0,mT]\times\Omega)$ (thanks to estimates \eqref{es:dua1} and \eqref{es:entreac}), hence the weak convergence of these sequences in $\L^2([0,mT]\times\Omega)$, and use the weak lower semi-continuity of the norm in $\L^2([0,mT]\times\Omega)$ on the sequences of the gradients. To get \eqref{24n}, we first use again the strong convergence of $\underline{U}_\tau^N$ in $\L^1([0,mT]\times\Omega)$ and Fatou's lemma to compute the limit in \eqref{ineq:l1}, which gives that $U$ is in $\L^\infty_{\textnormal{loc}}(\L^1(\Omega))$. It does not give directly the very estimate \eqref{24n}, but it is sufficient to compute rigorously for $i\neq j$ and for almost every $s\in\R_+$ (by taking in identities \eqref{def:weak_formul1}--\eqref{def:weak_formul2} a sequence of functions $\psi_i$ which are uniform in space, $\mathscr{C}^1_c$ in time, uniformly bounded in $\L^\infty(\R_+)$ and approximate the function $\mathbf{1}_{t\in[0,s]}$ in $\textnormal{BV}(\R_+)$, that is the sequence of $\psi_i$ approximates $\mathbf{1}_{t\in[0,s]}$ in $\L^1_{\text{loc}}(\R_+)$ and the sequence of the derivatives $\pa_t \psi_i$ approximate $\pa_t\mathbf{1}_{t\in[0,s]}=\delta_s-\delta_0$ weakly in the sense of Radon measures on $\R_+$)
\begin{equation*}\begin{split}
 \int_{\Omega} u_i(s,x) \, \dd x\
= \int_0^{s}\int_{\Omega} u_i(t,x)\,\big(\rho_i-u_i(t,x)^{s_{ii}}-u_j(t,x)^{s_{ij}}\big) \, \dd x\,\dd t \le \rho_i \int_0^{s}\int_{\Omega} u_i(t,x)\, \dd x\,\dd t,
\end{split}\end{equation*}
and we conclude using a Gronwall's lemma.

\section{Appendix}\label{sec:app}

\subsection{Examples of systems satisfying \textbf{H3}}\label{subsec:satisfH3}

In this section, we provide  sufficient conditions on the functions $a_i:\R_+^I\rightarrow\R_+$, allowing to prove that 
\begin{align*}
A: \R_+^I &\longrightarrow \R_+^I\\
X:=\begin{pmatrix}
x_1\\
\vdots\\
x_I
\end{pmatrix}&\longmapsto 
\begin{pmatrix}
a_1(X)\,x_1\\
\vdots\\
a_I(X)\,x_I
\end{pmatrix}
\end{align*}
is a homeomorphism from $\R_+^I$ to itself. More precisely, in our framework assumptions \textbf{H1} and \textbf{H2} are satisfied for the functions $a_i$ (that is, continuity and positive lower bound) and we assume the existence of a convex entropy. This last property implies in particular that $A$ is non-singular with $\det \D (A)>0$. We investigate two cases where these assumptions allow to prove that $A$ is a homeomorphism, that is, \textbf{H3} is satisfied.

\subsubsection{Two species with increasing diffusions}\label{subsubsec:2spec}

We start here with the case when $I=2$ (two species).

\begin{Propo}\label{prop:2spec}
Assume that $a_1,a_2:\R_+^2\rightarrow\R_+$ are continuous functions, lower bounded by $\alpha>0$. Assume that on $\R_+\times\R_+$, $A$ is strictly increasing (that is, each component is strictly increasing w.r.t. each of its variables) and that on $\R_+^*\times\R_+^*$, $A$ is $\mathscr{C}^1$ and $\det\D(A)$ remains strictly positive. Then $A$ is a homeomorphism from $\R_+^2$ to itself and a $\mathscr{C}^1$-diffeomorphism from $(\R_+^*)^2$ to itself.
\end{Propo}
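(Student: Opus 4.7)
The goal is to show that $A:\R_+^2\to\R_+^2$ defined by $A(x_1,x_2)=(a_1(X)\,x_1,a_2(X)\,x_2)$ is a homeomorphism, and a $\mathscr{C}^1$-diffeomorphism on the open quadrant $(\R_+^*)^2$. I would organize the argument into injectivity, surjectivity and continuity of the inverse.

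\textbf{Injectivity} follows from componentwise strict monotonicity by a short case analysis. Suppose $A(x_1,x_2)=A(y_1,y_2)$ with $(x_1,x_2)\neq(y_1,y_2)$ and, up to relabeling, $x_1\geq y_1$. If $x_1=y_1$, then the equality of the second components, combined with the strict monotonicity of $A_2$ in its second variable, forces $x_2=y_2$, a contradiction. If $x_1>y_1$, then $A_1(x_1,x_2)>A_1(y_1,x_2)$ forces $y_2>x_2$ (using strict monotonicity of $A_1$ in the second variable), but then the strict monotonicity of $A_2$ in both variables gives $A_2(y_1,y_2)<A_2(x_1,y_2)$ and $A_2(x_1,x_2)<A_2(x_1,y_2)$ arranged so that $A_2(x_1,x_2)<A_2(y_1,y_2)$, contradicting the hypothesis.

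\textbf{Surjectivity} is the main technical step. Fix $(b_1,b_2)\in (\R_+^*)^2$. The lower bound $a_i\geq\alpha$ makes the map $x_1\mapsto A_1(x_1,x_2)$ continuous, strictly increasing, vanishing at $0$ and unbounded, so it admits a unique inverse $\phi_1(x_2)\geq 0$ solving $A_1(\phi_1(x_2),x_2)=b_1$; one checks $\phi_1$ is continuous and strictly decreasing, bounded by $b_1/\alpha$. Defining $\phi_2$ analogously, the composition $F:=\phi_2\circ\phi_1$ is continuous, monotone and maps $[0,b_2/\alpha]$ into itself, hence the function $x_2\mapsto F(x_2)-x_2$ changes sign and vanishes at some $x_2^{*}$ by the intermediate value theorem. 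Setting $x_1^{*}:=\phi_1(x_2^{*})$ produces the desired preimage (both coordinates are positive because $b_1,b_2>0$). The boundary case ($b_1=0$ or $b_2=0$) is direct since then one coordinate of the preimage must vanish and the other is determined by a one-dimensional monotone inversion.

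\textbf{Continuity of the inverse.} The key observation is properness: from $a_i\geq\alpha$ we have $|A(X)|\geq \alpha\, |X|$, so preimages of bounded sets are bounded, i.e.\ $A$ is proper from $\R_+^2$ to itself. A continuous proper bijection between (locally compact Hausdorff) metric spaces is a homeomorphism, so $A^{-1}$ is continuous on the whole closed quadrant. On the open quadrant $(\R_+^*)^2$, the additional assumptions $A\in\mathscr{C}^1$ and $\det\D(A)>0$ allow invocation of the inverse function theorem, upgrading $A$ to a local and therefore (by bijectivity) a global $\mathscr{C}^1$-diffeomorphism. The one subtle point to handle with care is the interaction between monotonicity and the boundary axes when establishing the fixed point for surjectivity, but no delicate estimate is needed beyond the uniform lower bound $\alpha$, which guarantees both the invertibility of $x_i\mapsto A_i(x_i,\cdot)$ and the properness.
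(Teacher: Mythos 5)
Your surjectivity argument (the fixed point of $\phi_2\circ\phi_1$ via the intermediate value theorem) and your properness argument for the continuity of $A^{-1}$ are both fine, and slightly more elementary than the paper's treatment. But the injectivity step contains a genuine gap, and it is located exactly where the hypothesis $\det\D(A)>0$ has to be used. In your case $x_1>y_1$, you correctly deduce $y_2>x_2$, and then you obtain $A_2(y_1,y_2)<A_2(x_1,y_2)$ and $A_2(x_1,x_2)<A_2(x_1,y_2)$. These two inequalities bound \emph{both} quantities by the same third quantity $A_2(x_1,y_2)$; they do not order $A_2(x_1,x_2)$ against $A_2(y_1,y_2)$, so no contradiction follows. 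Moving in the direction ``first coordinate up, second coordinate down'' is precisely the direction in which componentwise monotonicity gives no information on $A_2$.

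This is not a presentational issue: componentwise strict monotonicity alone does \emph{not} imply injectivity. Take $A(x,y)=\bigl(x(1+y^2),\,y(1+x^2)\bigr)$, i.e.\ $a_1(x,y)=1+y^2$, $a_2(x,y)=1+x^2$, both continuous and bounded below by $1$, with each component strictly increasing in each variable on the open quadrant. Along the level curve $A_1=5$, i.e.\ $x=5/(1+y^2)$, the second component $g(y)=y\bigl(1+25/(1+y^2)^2\bigr)$ satisfies $g(1)=7.25$, $g(2)=4$, $g(5)\approx 5.19$, so $g$ is not monotone and the value $5$ is attained twice: $A$ is not injective. (Of course this $A$ violates $\det\D(A)>0$ somewhere, which is consistent with the proposition.) The paper's proof uses the determinant exactly here: after solving $A_1(u_f(v),v)=f$ for $u_f(v)$, it computes
$\partial_v\{A_2(u_f(v),v)\}=\det\D(A)/\partial_u A_1>0$,
so the reduced one-variable map is \emph{strictly increasing}, which gives uniqueness of $v$ (and existence at the same time). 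You need to inject the determinant condition into your argument in some such way; as written, your proof of injectivity proves a false statement.
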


\begin{proof}
It suffices to prove that $A$ is a bijection from $\R_+^2$ to itself. Then, the inverse function theorem ensures that $A$ is a diffeomorphism on $(\R_+^\ast)^2$. Thanks to the positive lower bound for $a_1$ and $a_2$ and the continuity of $A$ on $(\R_+)^2$, it is easy to check that $A^{-1}$ is continuous on $(\R_+)^2$.

Let us fix $(f,g) \in\R_+^2$ and find $(u,v)\in\R_+^2$ such that $A(u,v) = (f,g)$, that is 
\begin{align*}
\begin{pmatrix}
A_1(u,v)\\
A_2(u,v)
\end{pmatrix} =
\begin{pmatrix}
a_1(u,v) \,u\\
a_2(u,v) \,v
\end{pmatrix} = \begin{pmatrix}
f\\g
\end{pmatrix}.
\end{align*}
We first solve the first equation, considering $u$ as the unknown: for any $v\ge0$, the function $A_1(\cdot,v): u\in\R_+\mapsto a_1(u,v)\, u\in\R_+$ is strictly increasing (by assumption) and onto (due to the continuity and positivity of $a_1(\cdot,v)$). Therefore it is a bijection: we write $u=u_f(v)$ the only solution in $\R_+$ of $a_1(u,v)\,u=f$.

The monotonicity of $A_1$ (in $u$ and $v$) implies that $u_f$ is strictly decreasing on $\R_+$. This together with the continuity of $A_1$ on $\R_+^2$ implies that $u_f$ belongs to the class $\mathscr{C}^0(\R_+)$: indeed, for any $v\ge0$, if $v_n$ is a decreasing (resp. increasing) sequence converging to $v$, then $u_f(v_n)$ is increasing (resp. decreasing) and upper (resp. lower) bounded by $u_f(v)$, therefore it converges to some limit $l$ satisfying $A(l,v)=\lim A(u_f(v_n),v_n)=f$, that is, $l=u_f(v)$. Furthermore, we have on $(\R_+^\ast)^2$, $\det\D(A)=[\pa_u A_1][\pa_v A_2]-[\pa_1 A_2][\pa_2 A_1]>0$. By the assumption of monotonicity of $A$, the four derivatives appearing here are nonnegative, hence $\pa_u A_1>0$. Therefore by the implicit function theorem, $u_f$ belongs to the class $\mathscr{C}^1(\R_+^\ast)$, and for all $v>0$, $u_f'(v)=-\{\pa_v A_1/\pa_u A_1\}(u,u_f(v))$.


We then inject $u=u_f(v)$ in the second equation: we want to solve $a_2(u_f(v),v)v=g$. 
For $v>0$, we compute the derivative
\begin{align*}
\pa_v \{ A_2(u_f(v),v) \} = [u_f'(v) \pa_u A_2 + \pa_v A_2] (u_f(v),v)\\
=\frac{1}{\pa_u A_1} \, \det\begin{pmatrix}
\pa_u A_1 & \pa_v A_1 \\ \pa_u A_2 & \pa_v A_2
\end{pmatrix}(u_f(v),v)>0.
\end{align*}
The function $v\in\R_+\mapsto a_2(u_f(v),v)v\in\R_+$ is continuous, strictly increasing by the previous computation, and it is onto (by continuity and thanks to the lower bound for $a_2$). We write $v_{f,g}$ the only solution $v\ge0$ of $a_2(u_f(v),v)\, v=g$. Then $(u,v)=(u_f(v_{f,g}),v_{f,g})$ is the only solution of $A(u,v)=(f,g)$.
\end{proof}

\subsubsection{$A$ nonsingular on the closed set $\R_+^I$}
In the following, we prove the statement
\begin{Propo}\label{propo:Ahomeo}
Assume that the functions $a_i:\R_+^I\rightarrow\R_+$ are continuous and lower bounded by $\alpha>0$. Assume that on $\R_+^I$, $A$ is $\mathscr{C}^1$ and nonsingular with $\det \D(A)>0$. Then $A$ is a homeomorphism from $\R_+^I$ to itself and a $\mathscr{C}^1$ diffeomorphism from $(\R_+^*)^I$ to itself.
\end{Propo}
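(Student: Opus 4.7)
The plan is to combine Hadamard's global inverse function theorem on the interior with a short induction on $I$ for the boundary, tied together by a properness argument that comes almost for free from the uniform lower bound $a_i \ge \alpha$.

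\emph{Step 1 (properness).} From $A_i(X) = a_i(X)\,x_i \ge \alpha\,x_i$ one gets $|X|_1 \le \alpha^{-1}|A(X)|_1$, so preimages of bounded sets are bounded. Since $A$ is continuous, $A:\R_+^I\to\R_+^I$ is proper.

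\emph{Step 2 (diffeomorphism on the interior).} I transfer the problem to $\R^I$ via componentwise logarithm, setting $\Phi(U) := \bigl(u_i + \log a_i(e^U)\bigr)_i$ for $U\in\R^I$, where $e^U := (e^{u_1},\dots,e^{u_I})$. A short chain-rule computation yields
$$\D\Phi(U) \,=\, \operatorname{diag}\bigl(1/(a_i(X)\, x_i)\bigr)\,\D A(X)\,\operatorname{diag}(x_j),\qquad X = e^U,$$
hence $\det \D\Phi = \det \D A / \prod_i a_i > 0$ on $\R^I$ by hypothesis. Properness of $\Phi$ is immediate: if $\Phi(U_n)$ stays bounded, the inequality $u_{i,n} \le \Phi_i(U_n) - \log\alpha$ bounds $u_{i,n}$ from above, hence $e^{U_n}$ stays in a compact subset of $\R_+^I$, so $\log a_i(e^{U_n})$ is bounded above (continuity of $a_i$ on compacts), which finally bounds $u_{i,n}$ from below. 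Hadamard's global inverse function theorem on $\R^I$ then gives that $\Phi$ is a $\mathscr{C}^1$ diffeomorphism of $\R^I$, hence $A$ is a $\mathscr{C}^1$ diffeomorphism of $(\R_+^*)^I$.

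\emph{Step 3 (boundary by induction on $I$).} The positivity $a_i \ge \alpha > 0$ forces $A_i(X)=0 \iff x_i=0$, so $A$ stabilizes each face $F_k := \{x_k=0\} \cong \R_+^{I-1}$. On $F_k$ the $k$-th row of $\D A$ reduces to $(0,\dots,a_k,\dots,0)$ (with $a_k$ in position $k$), and row expansion gives the pointwise factorization $\det \D A = a_k\,\det \D(A|_{F_k})$ on $F_k$. Thus the restriction $A|_{F_k}$, identified with a map $\R_+^{I-1}\to\R_+^{I-1}$ of the same form, satisfies the hypotheses of Proposition~\ref{propo:Ahomeo} in dimension $I-1$. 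The base case $I=1$ is elementary, since $x\mapsto a(x)\,x$ is a continuous strictly increasing surjection of $\R_+$ onto itself.

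\emph{Step 4 (assembly).} Suppose $A(X)=A(Y)$; since $A_i=a_i\,x_i$ with $a_i>0$, the zero coordinates of $X$ and $Y$ coincide. Either both vectors lie in $(\R_+^*)^I$ and Step~2 gives $X=Y$, or both lie in a common face $F_k$ and the inductive conclusion of Step~3 gives $X=Y$. The same dichotomy applied to an arbitrary $Y\in\R_+^I$ provides surjectivity. For the continuity of $A^{-1}$: if $Y_n\to Y$ in $\R_+^I$, then $(A^{-1}(Y_n))$ is bounded by Step~1, each subsequential limit $X^\star$ satisfies $A(X^\star)=Y$, and bijectivity forces $X^\star=A^{-1}(Y)$, so the whole sequence converges.

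\emph{Main obstacle.} The only non-formal step is the verification of the hypotheses of Hadamard's theorem after the logarithmic change of variable; here the uniform lower bound $\alpha$ plays a double role, ensuring both the properness of $\Phi$ and the well-definedness of $\log a_i$. The induction on $I$ is then painless thanks to the clean algebraic factorization of $\det \D A$ along the coordinate faces, which is a direct consequence of the product structure $A_i = a_i\,x_i$.
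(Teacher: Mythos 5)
Your proof is correct, and it departs from the paper's argument in one substantive place: the treatment of the interior. The paper first proves surjectivity of $A$ on $(\R_+^*)^I$ by a purely topological argument (properness gives a closed image, nonsingularity gives an open image, connectedness concludes), and only then obtains injectivity by citing Theorem B of Gordon's paper, which is a covering-space version of Hadamard's theorem adapted to the open quadrant. You instead conjugate $A$ by the componentwise exponential, check that $\Phi=\log\circ A\circ\exp$ is proper on all of $\R^I$ (where the lower bound $\alpha$ again does the work) and has positive Jacobian, and invoke the classical Hadamard global inverse function theorem on $\R^I$, which yields bijectivity of $A_|$ in one stroke. This buys you a more self-contained argument — no need for the version of Hadamard's theorem on a proper open subset or for the external reference — at the modest cost of the chain-rule computation and the separate properness check for $\Phi$ (both of which you carry out correctly; in particular you rightly use continuity of the $a_i$ up to the boundary to bound $\log a_i(e^{U_n})$ from above even when some components of $e^{U_n}$ approach zero). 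Your Steps 1, 3 and 4 — properness of $A$ itself, the factorization $\det\D A = a_k\,\det\D(A|_{F_k})$ on each face, the induction on $I$, and the properness-based continuity of $A^{-1}$ — coincide with the paper's proof.
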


\begin{remark}
The restriction ``$A$ is $\mathscr{C}^1$ on the boundary'' does not allow the use of every power $x_i^{\gamma_{ij}}$ in the cross dependencies (they typically need to be bigger than $1$). However  
%
we can see in the proof (Step 3) that the assumption that $A$ is $\mathscr{C}^1$ and nonsingular on the closed set $\R_+^I$ is not optimal: it could be replaced by the weaker assumption that 
the restriction of $A$ on any half-(sub)space of the form $\prod_{i=1..I}\pi_i$, with $\pi_i=\{0\}$ or $\R_+^\ast$, is $\mathscr{C}^1$ and nonsingular.

This stronger version of the proposition would include the case of small (less than 1) power $x_i^{\gamma_{ij}}$.
\end{remark}
\begin{proof}

The proposition is essentially adapted from Hadamard global inverse mapping theorem and might be derived from \cite{Gordon72}. We prove here the main points. For convenience we write $A_|$ the restriction/corestriction of $A$ from $(\R_+^\ast)^I$ to itself, $A_|: (\R_+^\ast)^I\rightarrow (\R_+^\ast)^I, x\mapsto A(x)$. Note that indeed $A((\R_+^\ast)^I)\subset (\R_+^\ast)^I$ thanks to the positivity of the functions $a_i$, so that $A_|$ is well-defined.

\paragraph{Step 1. The functions $A$ and $A_|$ are proper.}
We claim that:

\vspace{2mm}

Inverse images by $A$ of compact set of $\R_+^I$ (resp. $(\R_+^*)^I$) are compact sets of $\R_+^I$ (resp. $(\R_+^*)^I$).

\vspace{2mm}

Let $K$ be a compact set of $\R_+^I$, then it is included in $[m,M]^I$ where $m>0$ if the compact is a subset of $(\R_+^*)^I$. Thanks to the positive lower bound $\alpha$ for the functions $a_i$, 
$$
A_i(x)\leq M \Rightarrow x_i\leq \alpha^{-1} M.
$$
It follows, by continuity of the $a_i$, that 
$$
\|x\|_\infty \leq \alpha^{-1} M \Rightarrow \max(a_i(x))\leq C(\alpha^{-1}M),
$$
and finally
$$
A(x)\in [m,M]^I \Rightarrow x\in [C(\alpha^{-1} M)^{-1}m, \alpha^{-1}M]^I=[m',M']^I.$$
We conclude using the continuity of $A$ that $A^{-1}(K)$ is then a closed bounded set of $[m',M']^I$ (with $m'>0$ for the case $(\R_+^*)^I $).

\paragraph{Step 2. The functions $A$ and $A_|$ are surjective.} We claim that $$A(\R_+^I)=\R_+^I \text{ and }  A((\R_+^*)^I)=(\R_+^*)^I.$$

We use the property that the image of an application which is proper and continuous is a closed set. Thanks to the previous step, this property applied to $A$ and $A_|$ gives that $A((\R_+^\ast)^I)$ is a closed set of $(\R_+^\ast)^I$ (for the induced topology on $(\R_+^\ast)^I$) and $A(\R_+^I)$ is a closed set (of $\R_+^I$).

The assumption of nonsingularity of $A$ implies by the implicit function theorem that $A((\R_+^\ast)^I)$ is also an open set of $(\R_+^\ast)^I$. By connectedness, we therefore have
\begin{equation*}
A((\R_+^\ast)^I)=(\R_+^\ast)^I.
\end{equation*}
Then since $A(\R_+^I)$ is a closed set of $\R_+^I$ containing $(\R_+^\ast)^I=A((\R_+^\ast)^I)$, we get the conclusion
\begin{equation*}
A(\R_+^I)=\R_+^I.
\end{equation*}


\paragraph{Step 3. The functions $A$ and $A_|$ are one-to-one.}

Finally, knowing that $A_|$ is onto from $(\R_+^*)^I$ to itself we can use theorem B in \cite{Gordon72}  to conclude that $A_|$ is a bijection.  To prove it is also the case for $A$ on $\R_+^I$, we only need to prove the injectivity on the boundary.

We write $\pa \R_+^I = \cup_i \{x\in \R_+^I : x_i=0\}$. Let us notice that thanks to the positivity of the functions $a_i$, 
 it suffices to show the injectivity on each of the spaces $\{x\in \R_+^I : x_i=0\}$. Without loss of generality,
 we consider the set $\{x\in \R_+^I : x_I=0\}$ and we want to show that on this set $A=(A_1,\cdots,A_{I-1},0)$ is one-to-one. Therefore, the initial problem of size $I$ reduces to the problem of size $I-1$ which consists in showing that the function $\tilde{A}:=(A_1,\cdots,A_{I-1})(\cdot,0):\R_+^{I-1}\rightarrow \R_+^{I-1}$ is one-to-one. The function $\tilde{A}$ is $\mathscr{C}^1$ and nonsingular since for all $\tilde{x}\in \R_+^{I-1}$,
 $$ 0< \det \D(A) (\tilde{x}, 0) = \det \begin{pmatrix}
 \D(\tilde{A})(\tilde{x}) & 0_{1,I-1} \\ \ast\cdots\ast & a_I(\tilde{x}, 0)
 \end{pmatrix} = a_I(\tilde{x}, 0)\, \det \D(\tilde{A})(\tilde{x}).
$$
Therefore, it satisfies the assumptions of Proposition \ref{propo:Ahomeo} with $I$ replaced by $I-1$. We conclude by iteration on the integer $I$, noticing that in the case $I=1$ we have $\pa \R_+^I = \{0\}$ and the injectivity on the unit set $\{0\}$ is obvious.

This ends the proof.

\end{proof}

\subsection{Elliptic estimates}

We start by recalling the following standard elliptic estimate (see for instance Theorem 2.3.3.6 in \cite{grisvard})

\begin{lem}\label{lem:ell} 
For any $p\in(1,\infty)$ and any regular open set $\Omega$, there exists positive constants $M_{p,\Omega}$ and $C_{p,\Omega}$ such that for all $M>M_{p,\Omega}$, 
\begin{align*}
\left.
    \begin{array}{ll}
& M w - \Delta w = f \in \L^p(\Omega), \\
&w\in \W^{2,1}_\nu(\Omega).
    \end{array}
\right \}\Longrightarrow \|w\|_{\W^{2,p}(\Omega)} \leq C_{p,\Omega} \|f\|_p \,.
\end{align*}
\end{lem}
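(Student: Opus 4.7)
The plan is to reduce the stated $\W^{2,p}$ estimate to the classical Calder\'on--Zygmund regularity for the Neumann Laplacian on a smooth bounded domain, and then kill the lower-order term by an energy-type computation against $|w|^{p-2}w$.

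More precisely, I would first invoke the standard elliptic regularity (Agmon--Douglis--Nirenberg, or exactly Theorem 2.3.3.6 of Grisvard cited in the paper), which provides a constant $C_\Omega = C(p,\Omega)>0$ such that
\begin{equation*}
\|w\|_{\W^{2,p}(\Omega)} \le C_\Omega\bigl(\|\Delta w\|_p + \|w\|_p\bigr)
\qquad\text{for all } w\in\W^{2,p}_\nu(\Omega).
\end{equation*}
Combined with the identity $\Delta w = Mw - f$, this yields
\begin{equation*}
\|w\|_{\W^{2,p}(\Omega)} \le C_\Omega\bigl((M+1)\|w\|_p + \|f\|_p\bigr),
\end{equation*}
so that the whole problem reduces to controlling $\|w\|_p$ by a quantity of the form $\|f\|_p/M$.

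To obtain this last bound, I would multiply the equation by $|w|^{p-2}w$ and integrate over $\Omega$. Thanks to the homogeneous Neumann condition, the boundary contribution vanishes, and after integration by parts we get
\begin{equation*}
M\int_\Omega |w|^p + (p-1)\int_\Omega |w|^{p-2}|\nabla w|^2 \;=\; \int_\Omega f\,|w|^{p-2}w \;\le\; \|f\|_p\,\|w\|_p^{p-1}.
\end{equation*}
Discarding the nonnegative gradient term yields $\|w\|_p \le \|f\|_p/M$. Re-injecting this into the previous inequality gives
\begin{equation*}
\|w\|_{\W^{2,p}(\Omega)} \le C_\Omega\left(\frac{M+1}{M} + 1\right)\|f\|_p \le 3 C_\Omega\,\|f\|_p
\end{equation*}
as soon as $M\ge 1$. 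Setting $M_{p,\Omega}:=1$ and $C_{p,\Omega}:=3C_\Omega$ then yields the claim.

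Two technical points will need care but do not appear to be real obstacles. First, the test-function manipulation above should be justified for a mere $\W^{2,p}_\nu(\Omega)$ solution; a clean route is to prove the estimate first for $w\in\mathscr{C}^\infty_\nu(\overline{\Omega})$ and pass to the limit by density, or to truncate $|w|^{p-2}w$ before integrating by parts. Second, one must know \emph{a priori} that the Neumann problem $Mw-\Delta w=f$ admits a solution in $\W^{2,p}_\nu(\Omega)$; for $p=2$ this follows from Lax--Milgram applied to the coercive bilinear form $M\int uv+\int\nabla u\cdot\nabla v$ on $\H^1(\Omega)$ (for any $M>0$), and one upgrades to general $p$ via the same elliptic regularity theorem. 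The main technical obstacle, if any, is really the proof of the underlying Calder\'on--Zygmund estimate itself, which we simply quote from Grisvard.
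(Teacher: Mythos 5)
Your proposal is correct in substance, but note that the paper does not prove this lemma at all: it is simply quoted as a standard result, with a pointer to Theorem 2.3.3.6 of Grisvard. Your argument is therefore a self-contained derivation of the $M$-uniform constant from the bare Calder\'on--Zygmund inequality $\|w\|_{\W^{2,p}}\le C(\|\Delta w\|_p+\|w\|_p)$, and the two-step structure (CZ estimate plus the $\L^p$ bound $\|w\|_p\le \|f\|_p/M$ obtained by testing against $|w|^{p-2}w$) is exactly the standard way such resolvent-type estimates are established, so there is no conflict with the cited source. One point deserves slightly more emphasis than you give it: the hypothesis of the lemma only places $w$ in $\W^{2,1}_\nu(\Omega)$, so the conclusion contains a genuine regularity upgrade from $\W^{2,1}$ to $\W^{2,p}$. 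Your energy computation (even after regularizing $|w|^{p-2}w$ near $w=0$, which is needed for $1<p<2$) requires $w$ to have enough integrability for $f\,|w|^{p-2}w$ and $|w|^{p-2}|\nabla w|^2$ to make sense, which is not automatic for a mere $\W^{2,1}$ function. The clean fix is the one you sketch in your second technical point, made explicit: solve $Mv-\Delta v=f$ in $\W^{2,p}_\nu(\Omega)$ by Lax--Milgram plus elliptic regularity, prove uniqueness of $\W^{2,1}_\nu$ solutions (for which testing the difference against a regularization of its sign, or a duality argument, suffices since $M>0$), and conclude $w=v\in\W^{2,p}_\nu(\Omega)$; the estimate then follows as you wrote it. With that addition the proof is complete, and one even gets the stronger conclusion that $M_{p,\Omega}$ can be taken to be any positive number.
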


Using this result we get the following useful Lemma:
\begin{lem}\label{Linfinityestimate}
 Let $f\in \L^\infty(\Omega)$, and let $w$ satisfy
$$w\in \H^2_\nu(\Omega),\; w\geq 0,\; \ -\Delta w\leq f \text{ in } \Omega.$$
Then there exists $C : = C(\Omega)$ such that
\begin{equation}\label{Linftylemma}
\|w\|_{\infty}\leq C\left(\|f\|_{\infty}+\| w\|_1\right). 
\end{equation}
\end{lem}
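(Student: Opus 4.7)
My approach combines a maximum-principle comparison with Lemma \ref{lem:ell} (elliptic regularity), a Sobolev bootstrap, and an interpolation/absorption step in order to bridge the $\L^1$--$\L^\infty$ gap. The central idea is to dominate $w$ pointwise by the solution of a linear elliptic problem with an explicit right-hand side.

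Fix $p > \max(1, d/2)$ and $M > M_{p,\Omega}$, with $M_{p,\Omega}$ from Lemma \ref{lem:ell}. Provided $w \in \L^p(\Omega)$, I would define $\phi \in \W^{2,p}_\nu(\Omega)$ as the unique solution of $M\phi - \Delta\phi = Mw + \|f\|_\infty$ with $\partial_n\phi = 0$ (existence and the bound $\|\phi\|_{\W^{2,p}} \le C(\|w\|_p+\|f\|_\infty)$ are granted by Lemma \ref{lem:ell}). Then $\phi - w \in \H^2_\nu(\Omega)$ satisfies $(M - \Delta)(\phi - w) = \|f\|_\infty - (-\Delta w) \ge \|f\|_\infty - f \ge 0$, so the weak maximum principle yields $0 \le w \le \phi$ a.e. Combined with the Sobolev embedding $\W^{2,p}(\Omega) \hookrightarrow \L^\infty(\Omega)$ (valid since $p > d/2$), this gives the conditional estimate
\[
\|w\|_\infty \le \|\phi\|_\infty \le C\,\|\phi\|_{\W^{2,p}} \le C'\bigl(\|w\|_p + \|f\|_\infty\bigr).
\]

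To reduce from $\|w\|_p$ to $\|w\|_1$, two further ingredients are needed. First, because $w \in \H^2(\Omega)$, Sobolev embedding already places $w$ in some $\L^{q_0}$ with $q_0 > 1$ ($q_0 = \infty$ when $d \le 3$). When $q_0 < p$, a finite iteration of the above comparison construction with intermediate exponents $q$ and the Sobolev embeddings $\W^{2,q} \hookrightarrow \L^{q^*}$ (where $q^* = dq/(d-2q)$ when $2q < d$, and arbitrary otherwise) raises the integrability of $w$ up to $\L^p$, so that the displayed estimate becomes effective. Second, the standard interpolation $\|w\|_p \le \|w\|_1^{1/p}\|w\|_\infty^{1-1/p}$ combined with Young's inequality transforms the displayed estimate into $\|w\|_\infty \le \varepsilon\|w\|_\infty + C_\varepsilon \|w\|_1 + C'\|f\|_\infty$ for arbitrary $\varepsilon > 0$; since the bootstrap has already guaranteed $\|w\|_\infty < \infty$, the term $\varepsilon\|w\|_\infty$ can legitimately be absorbed into the left-hand side by choosing $\varepsilon = 1/2$, yielding the claimed bound. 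I expect the bootstrap to be the main obstacle: Lemma \ref{lem:ell} is unavailable at $p=1$, so the $\H^2$-regularity of $w$ is essential to provide the starting exponent $q_0 > 1$ in high dimension, and one must carefully track the constants through the iteration to ensure they ultimately depend only on $\Omega$.
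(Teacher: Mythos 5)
Your proof is correct and follows essentially the same route as the paper's: comparison via $(M-\Delta)$ with Lemma \ref{lem:ell}, the Sobolev embedding $\W^{2,p}\hookrightarrow\L^\infty$ for $p>d/2$, the interpolation $\|w\|_p\le\|w\|_1^{1/p}\|w\|_\infty^{1-1/p}$, and absorption via Young's inequality. The only difference is that you explicitly add the bootstrap guaranteeing $w\in\L^p$ and $\|w\|_\infty<\infty$ before absorbing, a point the paper's terser proof leaves implicit; this is a legitimate refinement rather than a different approach.
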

\begin{proof}
First, we fix $p\in (d/2,\infty)$ and $M>M_{p,\Omega}$ (see Lemma \ref{lem:ell}) and rewrite the equation as $M w-\Delta w\leq f+ Mw$. Using $w\geq 0$, the comparison principle, the elliptic estimate of Lemma \ref{lem:ell} and the Sobolev embedding $\W^{2,p}(\Omega) \hookrightarrow \L^\infty(\Omega)$, we get
\begin{align*}
\|w\|_{\infty}&\leq C  \left(\|f+M w\|_{p}\right)\leq C\left(\|f\|_{p}+M\|w\|_{p}\right)\\
&\leq C \left(\|f\|_{p}+ M \|w\|_{\infty}^{(p-1)/p}\|w\|_1^{1/p}\right)\\
&\leq C \left(\|f\|_{p}+\ep\|w\|_{\infty}+c(\ep)\|w\|_1\right) \mbox{ (Young's inequality)},
\end{align*}
and we conclude by choosing $\ep$ small enough.
\end{proof}
\begin{remark}{\rm Obviously, the conclusion of Lemma \ref{Linfinityestimate} would be the same when one only assumes that $f\in \L^p(\Omega)$, for some $p>d/2$.
}
\end{remark}

\medskip

{\bf{Acknowledgement}}: 
The research leading to this paper was funded by the french "ANR blanche" project Kibord: ANR-13-BS01-0004.
\medskip

\bibliographystyle{abbrv}
\bibliography{DLMT}

\end{document}